\documentclass [12pt]{article}
\usepackage{amssymb,amsmath,comment,amsthm}

\def\N{\mathbb{N}}

\def\F{\mathbb{F}}

\newtheorem{theorem}{Theorem}[section]
\newtheorem{proposition}[theorem]{Proposition}
\newtheorem{corollary}[theorem]{Corollary}
\newtheorem{lemma}[theorem]{Lemma}

\newtheorem{remark}[theorem]{Remark}
\newtheorem{remarks}[theorem]{Remarks}
\newtheorem{conjecture}[theorem]{Conjecture}
\newtheorem{notation}[theorem]{Notation}
\newtheorem{notations}[theorem]{Notations}

%%%%%%%%%%%%%%%%%%%%%%%%%%%%%%%%%%%%%%%%%%%%%%

\begin{document}
%\title{On products of $x$, $x+1$ and Mersenne primes that are (unitary) perfect polynomials over %$\F_2$}
\title{On (unitary) perfect polynomials over $\F_2$ with only Mersenne primes as odd divisors}
\author{Gallardo Luis H. - Rahavandrainy Olivier  \\
Universit\'e de Brest, UMR CNRS 6205\\
Laboratoire de Math\'ematiques de Bretagne Atlantique\\
%6, Avenue Le Gorgeu, C.S. 93837, 29238 Brest Cedex 3, France.\\
e-mail : luisgall@univ-brest.fr - rahavand@univ-brest.fr}
\maketitle
\begin{itemize}
\item[a)]
Running head: Mersenne and perfect polynomials
\item[b)]
Keywords: Sum of divisors, polynomials, finite fields,
characteristic $2.$
\item[c)]
Mathematics Subject Classification (2010): 11T55, 11T06.
\item[d)]
Corresponding author:
\begin{center} Luis H. Gallardo
\end{center}
\end{itemize}

\newpage~\\
{\bf{Abstract}}\\
The only (unitary) perfect polynomials over $\F_2$  that are products of $x$, $x+1$ and  Mersenne primes are precisely the nine (resp. nine ``classes'') known ones. This follows from a new result about the factorization of $M^{2h+1} +1$, for a Mersenne prime $M$ and for a positive integer $h$. Other consequences of such a factorization are new results about odd perfect polynomials.

{\section{Introduction}}
Let $A \in \F_2[x]$ be a nonzero polynomial. We say that $A$ is \emph{even} if it has a linear factor and that  it is
\emph{odd} otherwise. We define a
\emph{Mersenne prime} (polynomial) over $\F_2$ as an irreducible polynomial of the form
$1+x^a(x+1)^b$, for some positive integers $a,b$. This comes as an analogue of the prime factors of the even perfect numbers.
As over the integers, we say that a divisor $d$ of $A$ is \emph{unitary} if $\displaystyle{\gcd(d,\frac{A}{d}) = 1}$.
Let $\omega(A)$ denote the number of distinct irreducible (or
\emph{prime}) factors of $A$ over $\F_2$ and let $\sigma(A)$ (resp. $\sigma^*(A)$) denote the sum of all (unitary) divisors of
$A$ (both $\sigma$ and $\sigma^*$ are multiplicative
functions). If $\sigma(A) = A$ (resp. $\sigma^*(A)=A$), then we
say that $A$ is (\emph{unitary}) \emph{perfect}. Finally, we say that a (unitary) perfect polynomial is \emph{indecomposable} if it is not a product of two coprime nonconstant (unitary) perfect polynomials.

The notion of (unitary) perfect polynomials is introduced in \cite{Canaday} (a simplified version of this Ph. D. thesis under Carlitz)
by E. F. Canaday in $1941$ and extended by J. T. B. Beard
Jr. et al. (probably still inspired by Carlitz that advised the advisor of Beard) in several directions (\cite{Beard2}, \cite{Beard}). Later research in the subject (\cite{Gall-Rahav4}, \cite{Gall-Rahav7},
\cite{Gall-Rahav5}, \cite{Gall-Rahav8},
\cite{Gall-Rahav2}) allows us to more precisely describe such polynomials. For the perfect case, we get:\\
- the ``trivial'' ones, of the form
$(x^2+x)^{2^n-1}$, for some positive integer $n$,\\
- nine others which are the unique even all whose odd factors are Mersenne primes raised to powers of the form $2^n-1$ (\cite{Gall-Rahav12}, Theorem 1.1),\\
- and the last two which are divisible by a non Mersenne prime.\\
By analogy, since we can also consider perfect polynomials, $A \in \F_2[x]$ with $\sigma(A)/A=1$,  as an analogue of multiperfect numbers, $n \in \N^{*}$ avec $\sigma(n)/n \in \N^{*}$,
it might have some interest to observe  that most known multiperfect numbers  (see OEIS sequence A007691) appear to be divisible by a Fermat prime or by a Mersenne prime.\\
Obviously,  all unitary perfect polynomials are even. We prove for the unitary case that essentially, the known ones belong to the nine ``classes''  relative to the equivalence relation :  two unitary perfect polynomials
are equivalent if and only if some power of $2$ of one equals some power of $2$ of the other (see below).

The paper consists of two major results that we describe now. The most important is Theorem \ref{result3} 
that  improves significantly on  these results (because, now there are no conditions on the powers of the $M_j$'s).
Its proof is obtained from new results given in Theorem \ref{result0} which in turn, extends  recent  non-trivial results in  \cite[ Theorem 1.4 ]{Gall-Rahav-mersenn}.

We began to study odd perfect polynomials in \cite{Gall-Rahav4}. They are all squares \cite{Canaday} and must have  \cite{Gall-Rahav7} at least five distinct prime divisors. We have also considered  \cite{Gall-Rahav4} ``special perfect'' polynomials which are of the form $S=P_1^2 \cdots P_m^2$, with each $P_j$ odd and irreducible.
We proved \cite{Gall-Rahav4} that if such a polynomial $S$ is perfect, then $\omega(S) \geq 10$, $\min_j{\deg(P_j)} \geq 30$ and $P_j \equiv 1 \mod x^2+x+1$. We get a new result for them  as well as a  new result about the existence of more general odd perfect polynomials,  in Theorem \ref{caseodd}, as a consequence of Theorem \ref{result0}.\\
Observe that Theorem \ref{result0} is a new step on the proof of a Conjecture about Mersenne primes that is discussed in the recent paper \cite{Gall-Rahav-mersenn}.\\

It is convenient to fix some notations:
\begin{itemize}
\item[(a)]
For $S \in \F_2[x]$, we denote by
$\overline{S}$ the polynomial obtained from $S$ with $x$ replaced by
$x+1$: $\overline{S}(x) = S(x+1)$.
\item[(b)] $\N$ (resp. $\N\sp{*}$) denotes, as usual, the set of nonnegative
integers (resp. of positive integers).
\item[(c)] To avoid trivialities, we suppose that any (unitary) perfect polynomial is indecomposable.
\end{itemize}
\begin{notations}~\\
Set $M_j:= 1+x(x+1)^j, j \in \{1,2,3\}$, ${\mathcal{M}}:= \{M_1, M_2,\overline{M_2}, M_3, \overline{M_3} \}$,\\ ${\mathcal{P}}:= \{T_1,\ldots, T_9\}$ and ${\mathcal{P}}_u:= \{U_1,\ldots, U_9\}$ where:
$$\begin{array}{l}
T_1 =x^2(x+1)M_1, T_2=\overline{T_1}, \\
T_3 = x^4(x+1)^3M_3, T_4 =\overline{T_3},\
T_5 = x^4(x+1)^4M_3\overline{M_3} = \overline{T_5},\\
T_6 = x^6(x+1)^3M_2\overline{M_2}, T_7= \overline{T_6},\\
T_8 = x^4(x+1)^6M_2\overline{M_2} M_3 \text{ and } T_9 = \overline{T_8},\\
U_1 = x^3(x+1)^3M_1^2, \ U_2 = x^3(x+1)^2M_1, \ U_3 = x^5(x+1)^4M_3, \\
U_4 = x^7(x+1)^4M_2 \overline{M_2}, \ U_5 = x^5(x+1)^6M_1^2M_3, \ U_6 = x^5(x+1)^5M_3 \overline{M_3}, \\
U_7 = x^7(x+1)^7M_2^2 {\overline{M_2}}^2, \ U_8 = x^7(x+1)^6M_1^2 M_2 \overline{M_2}, \ U_9 = x^7(x+1)^5M_2 \overline{M_2} \ \overline{M_3}.\\
\end{array}$$
The nine nontrivial perfect polynomials cited above are: $T_1,\ldots, T_9$ and the two others are: $T_{10} = x^2(x+1)(x^4+x+1){M_1}^2$, $T_{11} = \overline{T_{10}}$.\\
The known unitary perfects are all of the form $B^{2^n}$, where $n\in \N$ and $B \in \{U_1,\ldots, U_9\}$.
\end{notations}

Our results are:
\begin{theorem} \label{result3}
Let $A = x^a(x+1)^b \prod_i P_i^{h_i} \in \F_2[x]$ with each $P_i$ Mersenne prime and $h_i \in \N^{*}$. Then $A$ is even $(unitary)$ perfect if and only if $A \in {\mathcal{P}}$ $(resp.  \ A = B^{2^n}$ with $n \in \N$ and $B \in {\mathcal{P}}_u)$.
\end{theorem}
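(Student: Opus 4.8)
The plan is to treat the ``if'' direction as a finite verification (each $T_j$ is perfect and each $U_j$, together with all its squares $U_j^{2^n}$, is unitary perfect) and to concentrate on the ``only if'' direction, whose whole purpose is to remove the hypothesis on the exponents so as to fall back on the already-known classification. For the perfect case, suppose $A = x^a(x+1)^b\prod_i P_i^{h_i}$ is (indecomposable, even) perfect with every $P_i$ a Mersenne prime. Since $\sigma$ is multiplicative and $\sigma(A)=A$, the irreducible factors of $\prod_i\sigma(P_i^{h_i})$ all lie in $\{x,x+1\}\cup\{P_i\}_i$; because products in $\F_2[x]$ admit no cancellation, each $\sigma(P_i^{h_i})$ must separately have all of its irreducible factors among $x$, $x+1$ and the Mersenne primes. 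So the problem reduces to a purely local question: for a Mersenne prime $M$, for which exponents $h$ is $\sigma(M^{h})$ free of non-Mersenne (and non-$\{x,x+1\}$) irreducible factors?

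First I would record the elementary characteristic-$2$ identity $\sigma(M^{2h+1}) = (M+1)\,\sigma(M^{h})^2$, which follows from $M^{2h+2}+1=(M^{h+1}+1)^2$ together with $M^{h+1}+1=(M+1)\sigma(M^{h})$. Since $M+1=x^{c}(x+1)^{d}$ carries no Mersenne factor and squaring leaves the set of irreducible factors unchanged, $\sigma(M^{2h+1})$ has a non-Mersenne factor exactly when $\sigma(M^{h})$ does; this is the inductive step for odd exponents. For even exponents I would invoke Theorem \ref{result0}: writing $M^{2h+1}+1=(M+1)\sigma(M^{2h})$, the factorization supplied there shows that for every $h\ge 1$ the polynomial $\sigma(M^{2h})$ carries an irreducible factor that is not a Mersenne prime. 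Combining the two, a downward induction via $h\mapsto(h-1)/2$ (legitimate only while the exponent stays odd) shows that $\sigma(M^{h})$ is free of non-Mersenne factors if and only if $h$ is of the form $2^{n}-1$. Applied to each $P_i$, this forces every $h_i=2^{n_i}-1$.

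Once the exponents are pinned down, $\sigma(P_i^{h_i})=(P_i+1)^{h_i}$ is a product of powers of $x$ and $x+1$ only, so $A$ is an even perfect polynomial all of whose odd irreducible factors are Mersenne primes raised to powers of the form $2^{n}-1$; this is precisely the hypothesis of \cite[Theorem 1.1]{Gall-Rahav12}, which yields $A\in\{T_1,\dots,T_9\}=\mathcal{P}$. For the unitary case I would run the parallel argument with $\sigma^{*}$: here $\sigma^{*}(P^{h})=1+P^{h}=(P+1)\,\sigma(P^{h-1})$, and $\sigma^{*}(P^{2^k m})=(\sigma^{*}(P^{m}))^{2^k}$, so the same blend of the halving identity and Theorem \ref{result0} forces every Mersenne exponent $h_i$ to be a pure power of $2$, whence $\sigma^{*}(P_i^{h_i})=(P_i+1)^{h_i}$ again introduces no odd factor. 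Using that $C$ is unitary perfect iff $C^{2}$ is (because $\sigma^{*}(C^{2})=\sigma^{*}(C)^{2}$ and squaring is injective on $\F_2[x]$), I would extract the largest square, writing $A=B^{2^{n}}$ with $B$ not a perfect square, and finish by the finite classification of the primitive $B$, obtaining $B\in\{U_1,\dots,U_9\}=\mathcal{P}_u$.

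The main obstacle is concentrated in the exponent-reduction step, that is, in correctly leveraging Theorem \ref{result0}. The delicate point is that the recursion must track \emph{only} the presence of non-Mersenne factors while allowing Mersenne factors to appear freely in the intermediate $\sigma(M^{h})$, and one must verify that the induction terminates exactly at the numbers $2^{n}-1$ and at no others. After that, everything is bookkeeping plus an appeal to the previous classification, the sole genuinely new input beyond \cite{Gall-Rahav12} being the factorization of $M^{2h+1}+1$. For the unitary part the extra care lies in the normalization $A=B^{2^{n}}$: one must check that unitary-perfectness descends under square roots and that the primitive representatives are precisely the nine $U_j$, which is where the analogue of the even-case classification is required.
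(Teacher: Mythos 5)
There is a genuine gap, and it sits exactly where you located the ``main obstacle'': your even-exponent step reads Theorem \ref{result0} as the unconditional statement that $\sigma(M^{2h})$ carries a non-Mersenne irreducible factor for \emph{every} Mersenne prime $M$ and every $h\geq 1$. That unconditional statement is Conjecture \ref{oldconject}, which this paper explicitly does \emph{not} prove (see the closing Remarks: the case $M\notin\mathcal{M}$ with $2h+1$ divisible only by $5$, $7$, or primes that are neither Mersenne $\neq 7$ nor have $ord_p(2)\equiv 0 \bmod 8$ remains open); moreover it is literally false at the boundary, since $\sigma(M_2^{2})=M_1M_3$. As a result, your claimed local dichotomy --- $\sigma(M^{h})$ is free of non-Mersenne factors if and only if $h=2^{n}-1$ --- is false: taking $M=M_2$ and $h=5$, your own identity gives $\sigma(M_2^{5})=(M_2+1)\,\sigma(M_2^{2})^{2}=x(x+1)^{2}M_1^{2}M_3^{2}$, whose factors are all linear or Mersenne; likewise in the unitary case $\sigma^{*}(M_2^{6})=\bigl(x(x+1)^{2}M_1M_3\bigr)^{2}$, so the exponent $6=3\cdot 2$ passes your local test without being a power of $2$. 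Hence the exponents $h_i$ cannot be pinned down prime-by-prime as you propose, neither for $M_2,\overline{M_2}$ nor for any Mersenne prime outside $\mathcal{M}$.

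The paper closes both holes with two arguments absent from your proposal. First, for primes $P_j\notin\mathcal{M}$ it never shows $\sigma(P_j^{h_j})$ has a non-Mersenne factor; instead it splits $A=A_1A_2$, with $A_1$ collecting $x$, $x+1$ and the $\mathcal{M}$-primes and $A_2$ the remaining Mersenne primes, proves via Lemmas \ref{canadayperf} and \ref{divisorsigmA1} that no $P_j\notin\mathcal{M}$ divides $\sigma(A_1)$, deduces $A_2\mid\sigma(A_2)$ so that $A_2$ is itself perfect, and concludes $A_2=1$ from the indecomposability of $A$ --- a global structural step that sidesteps the open conjecture entirely (Lemma \ref{gcdMjsigmA1}). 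Second, for $P_j\in\{M_2,\overline{M_2}\}$, where Theorem \ref{result0} i) is silent at $h=1$ and your dichotomy fails, Proposition \ref{caseperfect} ii) argues globally: a bad exponent forces $a,b\in\{7\cdot 2^{n}-1\}$ (via Canaday's Theorem 8), whence $M_1^{2^{\nu}}$ divides $\sigma(A)=A$ for some $\nu\in\N^{*}$, contradicting the already-established form $2^{n_1}-1$ of the exponent of $M_1$. Your odd-exponent identity $\sigma(M^{2h+1})=(M+1)\sigma(M^{h})^{2}$ and the final appeal to \cite[Theorem 1.1]{Gall-Rahav12} do match the paper's Proposition \ref{caseperfect} i) and Lemma \ref{oldresult1}, but without the decomposition $A=A_1A_2$ and the $M_1$-parity argument the proof does not go through, and the unitary half inherits the same defects.
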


\begin{theorem} \label{caseodd}
i) There exists no special perfect polynomial divisible only by Mersenne primes.\\
ii) If $A=P_1^{2h_1} \cdots P_m^{2h_m}$, where each $P_j$ is a Mersenne prime and if for some $j$, $2h_j+1$ is divisible by a Mersenne prime $\not= 7$ or by a Fermat prime $\not= 5$, then $A$ is not perfect.
\end{theorem}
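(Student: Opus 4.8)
The plan is to derive both statements from Theorem \ref{result0} through one common device. If $A = P_1^{2h_1}\cdots P_m^{2h_m}$ is perfect then $\sigma(A) = A$, and since every $P_j$ is a Mersenne prime, every irreducible factor of $A$ is a Mersenne prime. Consequently it suffices to produce a \emph{single} non-Mersenne irreducible factor of $\sigma(A)$: such a factor cannot divide $A$, so $\sigma(A) \neq A$. Because $\sigma$ is multiplicative and $\sigma(A) = \prod_i \sigma(P_i^{2h_i})$ is an honest product in which no cancellation occurs, any non-Mersenne factor of a single $\sigma(P_i^{2h_i})$ survives in $\sigma(A)$. I would also note at the outset that each $\sigma(P_i^{2h_i})$ is odd: evaluating at $x=0$ and at $x=1$ gives $1+1+\cdots+1 = 2h_i+1 \equiv 1 \pmod 2$, since every Mersenne prime takes the value $1$ at $0$ and at $1$. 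Hence any irreducible factor we exhibit is automatically non-linear and distinct from the $P_i$.

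For part (i), a special perfect $S = P_1^2 \cdots P_m^2$ has all exponents equal to $2$, so each local factor is $\sigma(P_j^2) = P_j^2 + P_j + 1$, i.e. the $2h+1 = 3$ instance of Theorem \ref{result0}. That case guarantees a non-Mersenne irreducible factor of $\sigma(M^2) = (M^3+1)/(M+1)$ for every Mersenne prime $M$; for orientation, $\sigma(M_1^2) = x^4 + x + 1 = 1 + x(x+1)(x^2+x+1)$ is irreducible but not of the form $1+x^a(x+1)^b$, and it is exactly the non-Mersenne prime occurring in $T_{10}$. Applying this to one $P_j$ yields the required non-Mersenne factor of $\sigma(S)$, so no such $S$ is perfect.

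For part (ii), suppose some $2h_j+1$ is divisible by a prime $p$ that is a Mersenne prime $\neq 7$ or a Fermat prime $\neq 5$, and write $M := P_j$. The bridge is the characteristic-$2$ divisibility: $p \mid 2h_j+1$ gives $M^p + 1 \mid M^{2h_j+1}+1$, and dividing through by $M+1$ yields $\sigma(M^{p-1}) \mid \sigma(M^{2h_j})$, where $\sigma(M^{p-1}) = 1 + M + \cdots + M^{p-1}$ is the value at $M$ of the $p$-th cyclotomic polynomial. Theorem \ref{result0}, applied with $h = (p-1)/2$, produces a non-Mersenne irreducible factor $Q$ of $\sigma(M^{p-1})$; then $Q \mid \sigma(M^{2h_j}) \mid \sigma(A)$ while $Q \nmid A$, so $A$ is not perfect.

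The entire substance sits in Theorem \ref{result0}, and that is where the real obstacle lies: one must control the factorization of $M^{2h+1}+1$ over $\F_2$ and show that the cyclotomic pieces $1 + M + \cdots + M^{p-1}$ are forced to carry a non-Mersenne irreducible for the relevant $p$. The exclusions of $7$ and $5$ are unavoidable and mark the genuine exceptions; the underlying reason is already visible at $M = x$, where $1 + x + x^2 + x^3 + x^4 = M_3$ (an irreducible Mersenne prime) for $p = 5$, and $1 + x + \cdots + x^6 = (x^3+x+1)(x^3+x^2+1) = M_2\,\overline{M_2}$ (a product of two Mersenne cubics) for $p = 7$; these are precisely the all-Mersenne factorizations behind the perfect polynomials $T_3$ and $T_6$. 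Granting Theorem \ref{result0}, the remaining steps of Theorem \ref{caseodd} — the divisibility transfer after dividing by $M+1$ and the parity and distinctness bookkeeping — are routine.
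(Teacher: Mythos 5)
Your overall skeleton---reduce both parts to Theorem \ref{result0} via the multiplicativity of $\sigma$ and the transfer $\sigma(M^{p-1})\mid\sigma(M^{2h})$ for $p\mid 2h+1$ (Lemma \ref{p-reduction})---is exactly the paper's strategy, and your side remarks (oddness of $\sigma(P^{2h})$, the computation $\sigma(M_1^2)=x^4+x+1$) are correct. But there is a genuine gap at the heart of both parts: you claim that the $2h+1=3$ instance of Theorem \ref{result0} ``guarantees a non-Mersenne irreducible factor of $\sigma(M^2)$ for every Mersenne prime $M$.'' This is false, and the hypotheses of Theorem \ref{result0} exclude precisely the failing case: part i) requires $h\geq 2$ when $M\in\{M_2,\overline{M_2}\}$, and part ii) requires $M\notin\mathcal{M}$. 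Indeed $\sigma(M_2^2)=M_1M_3$ (the paper records this explicitly at the start of Section \ref{caseM2}), a product of two Mersenne primes, so no non-Mersenne factor exists at all. Consequently your argument for (i) collapses whenever every $P_j$ lies in $\{M_2,\overline{M_2}\}$, and your argument for (ii) collapses when the designated $P_j$ lies in $\{M_2,\overline{M_2}\}$ with $h_j=1$: there $2h_j+1=3$ is a Mersenne prime $\neq 7$ (and a Fermat prime $\neq 5$), so the hypothesis of (ii) is satisfied, yet your chosen $Q$ does not exist. (For $P_j\in\{M_2,\overline{M_2}\}$ with $h_j\geq 2$ you should bypass the $p$-reduction and apply Theorem \ref{result0}-i) directly to $\sigma(P_j^{2h_j})$; the only truly problematic configuration is $h_j=1$.)

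The paper closes this hole with a second round that your proposal is missing. If $A$ (or $S$) were perfect, Theorem \ref{result0} first forces the offending prime into $\{M_2,\overline{M_2}\}$, since any other choice already yields a non-Mersenne factor. But then $\sigma(P_j^2)=M_1M_3$ (or $M_1\overline{M_3}$) divides $\sigma(A)=A$, so $M_1$, and $M_3$ or $\overline{M_3}$, must themselves occur among the prime factors of $A$, with some exponent $2h_i$ (equal to $2$ in the special case). Now for every $T\in\{M_1,M_3,\overline{M_3}\}$, Theorem \ref{result0}-i) applies for \emph{all} $h\geq 1$, so $\sigma(T^{2h_i})$---hence $A$ itself---is divisible by a non-Mersenne prime, the desired contradiction. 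With this bootstrap inserted after your direct application of Theorem \ref{result0}, both parts go through, and the rest of your bookkeeping (the divisibility chain $Q\mid\sigma(M^{p-1})\mid\sigma(M^{2h_j})\mid\sigma(A)$ and the distinctness of $Q$ from the $P_i$) is sound.
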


\begin{theorem} \label{result0}
Let $h$ be a positive integer and $M \in \F_2[x]$ a Mersenne prime. Then, in the following cases, $\sigma(M^{2h})$ is divisible by a non Mersenne prime:\\
i) $(M \in \{M_1, M_3, \overline{M_3} \})$ or $(M \in \{M_2, \overline{M_2}\}$ and $h \geq 2)$.\\
ii) $M \not\in {\mathcal{M}}$ and $2h+1$ is divisible by a prime number $p$, where $(p \not= 7$ is a Mersenne number$)$ or $($the order of $2$ modulo $p$ is divisible by $8$ $($in particular, when $p$ is a Fermat prime greater than $5))$.
\end{theorem}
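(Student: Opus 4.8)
The plan is to reduce both parts to producing a single non-Mersenne irreducible factor of one cyclotomic piece of $\sigma(M^{2h})$. First I would record the two elementary facts that drive everything. Since $M(0)=M(1)=1$ (because $M=1+x^a(x+1)^b$ with $a,b\ge 1$), we get $\sigma(M^{2h})(0)=\sigma(M^{2h})(1)=2h+1\equiv 1\pmod 2$, so $\sigma(M^{2h})$ is odd; hence ``divisible by a non-Mersenne prime'' is equivalent to ``$\sigma(M^{2h})$ is \emph{not} a product of Mersenne primes.'' Next, over $\F_2$ one has $M^{2h+1}+1=\prod_{d\mid 2h+1}\Phi_d(M)$ and $M+1=x^a(x+1)^b=\Phi_1(M)$, whence
\[
\sigma(M^{2h})=\frac{M^{2h+1}+1}{M+1}=\prod_{\substack{d\mid 2h+1\\ d>1}}\Phi_d(M).
\]
This is the announced factorization of $M^{2h+1}+1$. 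Consequently it suffices to exhibit, for one divisor $d>1$ of $2h+1$, a non-Mersenne irreducible factor of $\Phi_d(M)$: such a factor automatically divides $\sigma(M^{2h})$. Both parts then become statements about the single pieces $\Phi_d(M)$.

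For part i) the divisors $M$ are the five explicit small primes, so I would combine direct computation with the reduction above. The exclusion of $M\in\{M_2,\overline{M_2}\}$ at $h=1$ is forced by a genuine all-Mersenne factorization: with $M_2=x^3+x+1$ one computes $\sigma(M_2^2)=1+M_2+M_2^2=(x^2+x+1)(x^4+x^3+1)=M_1\,\overline{M_3}$, a product of two Mersenne primes, so no non-Mersenne factor can exist there. In every other case I would show that some $\Phi_d(M)$ with $d\mid 2h+1$ carries a non-Mersenne factor. Because $M$ is fixed and of low degree, the residues $M^i$ modulo any fixed irreducible polynomial are periodic in $i$, collapsing the infinitely many $h$ to finitely many residue classes of $2h+1$; these I would dispatch by reducing $\Phi_d(M)$ modulo $M_1=x^2+x+1$ (i.e.\ working in $\F_4$, where $x\mapsto\omega$, $x+1\mapsto\omega^2$, and a Mersenne prime $\neq M_1$ maps into $\F_4^{*}$) together with explicit low-degree factorizations, for instance the base computation $\sigma(M_1^2)=x^4+x+1$, which is itself a non-Mersenne prime.

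For the general mechanism (part ii), and the bulk of part i)) I would fix a prime $p\mid 2h+1$ meeting the hypothesis and argue by contradiction that \emph{every} irreducible factor of $\Phi_p(M)$ is Mersenne. Let $Q=1+x^{c}(x+1)^{d}$ be such a factor and put $K=\F_2[x]/(Q)=\F_{2^{m}}$, $m=\deg Q=c+d$, with $\theta$ the image of $x$. Two relations hold in $K$: from $Q(\theta)=0$ we get the \emph{Mersenne relation} $\theta^{c}(\theta+1)^{d}=1$, and from $\Phi_p(M)(\theta)=0$ the image $\rho:=1+\theta^{a}(\theta+1)^{b}$ of $M$ is a primitive $p$-th root of unity. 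In particular $p\mid 2^{m}-1$, so $s:=\mathrm{ord}_p(2)$ divides $m$ and $\rho$ lies in the subfield $\F_{2^{s}}\subseteq K$; thus $\theta^{a}(\theta+1)^{b}=1+\rho\in\F_{2^{s}}^{*}$. Writing $\theta=g^{u}$ and $\theta+1=g^{v}$ for a generator $g$ of $K^{*}$, the Mersenne relation becomes $cu+dv\equiv 0$ and the subfield condition becomes $(au+bv)(2^{s}-1)\equiv 0$, both modulo $2^{m}-1$; the aim is to show that these congruences, fed back into $\rho^{p}=1$, are incompatible precisely under the stated hypotheses on $p$.

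The hard part, and the genuinely new input over \cite{Gall-Rahav-mersenn}, will be extracting the exact arithmetic threshold from this system: establishing inconsistency when $p\neq 7$ is a Mersenne number, and when $8\mid s=\mathrm{ord}_p(2)$ (which covers the Fermat primes greater than $5$, for which $s$ is a power of $2$ at least $8$), while genuinely leaving $p=5$ and $p=7$ uncovered. I expect the role of the $2$-adic valuation $v_2(s)\ge 3$ to be the control of how the $2$-power torsion of $K^{*}$ interacts with the ``places'' $\theta$ and $\theta+1$, and the exclusion of $7$ to mirror the all-Mersenne coincidences already visible at small level (as in $\sigma(M_2^2)=M_1\overline{M_3}$ and in the exponents $2^{n}-1$ occurring throughout $\mathcal{P}$ and $\mathcal{P}_u$). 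The cleanest route is likely to argue on the subfield $\F_{2^{s}}$ directly, tracking the multiplicative orders of $\theta$, $\theta+1$ and $1+\rho$, rather than on the congruences above. Once $\Phi_p(M)$ is shown to contain a non-Mersenne factor, the cyclotomic factorization closes both parts, with the explicit computations pinning down the finitely many genuine exceptions in part i).
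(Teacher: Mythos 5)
Your opening reductions are sound and in fact coincide with the paper's: the cyclotomic factorization $\sigma(M^{2h})=\prod_{d\mid 2h+1,\,d>1}\Phi_d(M)$ is a refinement of Lemma \ref{p-reduction}, and your field-theoretic setup (a Mersenne factor $Q$ of degree $m$ forces $p\mid 2^m-1$, hence $s=\mathrm{ord}_p(2)\mid m$) is exactly Lemma \ref{ord(2)divise} and the primitivity arguments of Lemma \ref{reduction2}. But at every decisive point the proposal stops at a plan and the missing ideas are genuine. For the branch $8\mid \mathrm{ord}_p(2)$, what closes the argument in the paper is not any incompatibility of discrete-log congruences but a structural fact you never isolate: \emph{no Mersenne prime has degree divisible by $8$} (Lemma \ref{mersennedeg8k}, via the parity of $\omega(1+x^{c_1}(x+1)^{c_2})$ from \cite[Corollary 3.3]{Gall-Rahav-mersenn}). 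Combined with $s\mid\deg Q$ this gives an immediate contradiction; your hope that ``$2$-power torsion of $K^*$'' interacting with $\theta,\theta+1$ will yield inconsistency is unsupported, and you explicitly defer it (``the hard part \dots will be extracting the exact arithmetic threshold''). Similarly, your uniform treatment of Mersenne $p\neq 7$ hides a sharp dichotomy. For $p=2^m-1\geq 31$ the paper's short route is that \emph{every} irreducible $P\neq M$ of degree $m$ divides $\sigma(M^{p-1})$ (Corollary \ref{anydivides}) together with the count $\varphi(m)<N_2(m)$ for $m\geq 4$ (Lemma \ref{phiandirreduc}), producing a non-Mersenne divisor outright; nothing in your congruence system delivers this. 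For $p=3$ ($m=2$) that mechanism fails --- the only degree-$2$ irreducible is $M_1$, itself Mersenne --- and the paper needs an entirely different argument: assuming all factors Mersenne, $U_4=\sigma(\sigma(M^2))$ must be a split square, while a coefficient computation gives $\alpha_3(U_4)=1$, plus the quoted Lemma \ref{oldresult3} for the case $\omega(\sigma(M^2))=2$. Your proposal never addresses $p=3$ as a special case, and it is the hardest one in part ii).

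Part i) has the same defect. Reducing modulo $M_1$ (working in $\F_4$) only yields a single constraint in $\F_4^{*}$, i.e.\ a congruence modulo $3$ on exponent sums; it cannot rule out all-Mersenne factorizations by itself, and you give no argument that finitely many residue classes of $2h+1$ suffice to produce a non-Mersenne factor of some $\Phi_d(M)$ for \emph{every} $h$. The paper's actual engine is different: if $\sigma(M^{2h})=\prod_j P_j$ with all $P_j$ Mersenne, then $U_{2h}=\sigma(\sigma(M^{2h}))=x^u(x+1)^v$ is a split square (Corollary \ref{sommaieven}), and a careful extraction of top coefficients (Lemmas \ref{lesalfal} and \ref{alfasigmM2h}) shows $\alpha_3(U_{2h})=1$ or $\alpha_5(U_{2h})=1$, an odd-index nonzero coefficient contradicting squareness; this requires case splittings on $\gcd(\sigma(M^{2h}),M_1M_2\overline{M_2})$ imported from \cite{Gall-Rahav13}, and, for $M\in\{M_2,\overline{M_2}\}$ with $7\mid 2h+1$ or $2h+1=3^w$, explicit factorizations of $\sigma(M^6)$ and $\sigma(M^8)$ exhibiting concrete non-Mersenne primes. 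None of this coefficient machinery, nor any substitute for it, appears in your sketch. (Credit where due: your base computation $\sigma(M_2^2)=(x^2+x+1)(x^4+x^3+1)=M_1\overline{M_3}$ is correct --- indeed the paper's own line ``$\sigma(M_2^2)=M_1M_3$'' has the conjugate mislabeled --- and your observation that Fermat primes $p>5$ have $\mathrm{ord}_p(2)=2^{w+1}\geq 8$ matches Corollary \ref{pFermat}. But these are the easy boundary checks; the theorem's content lies precisely in the steps you left as expectations.)
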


\section{Proofs of Theorems \ref{result3} and \ref{caseodd}} \label{proof3}
Sufficiency in Theorem \ref{result3} is obtained by direct computations. For the necessity, we shall apply Lemmas \ref{oldresult1} and \ref{oldresult2}, Propositions \ref{caseperfect} and \ref{caseunitperfect}. We use Theorem \ref{result0} to prove these two propositions. A similar method gives Theorem \ref{caseodd}.
We recall below \cite[ Theorem 1.4 ]{Gall-Rahav-mersenn} which partially solves \cite[Conjecture 5.2]{Gall-Rahav12} about the factorization of $\sigma(M^{2h})$:

\begin{conjecture} [Conjecture 5.2 in \cite{Gall-Rahav12}] \label{oldconject}
Let $h \in \N^*$ and $M$ be a Mersenne prime over $\F_2$ such that $M \not\in \{M_2, \overline{M_2}\}$. Then, the polynomial $\sigma(M^{2h})$ is divisible by a non Mersenne prime.
\end{conjecture}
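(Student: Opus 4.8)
The plan is to base everything on the identity $\sigma(M^{2h}) = (M^{2h+1}+1)/(M+1)$ together with the factorization $M^{2h+1}+1 = \prod_{e \mid 2h+1}\Phi_e(M)$, which holds over $\F_2$ because $2h+1$ is odd; writing $M = 1 + x^{a}(x+1)^{b}$, so that $M+1 = x^{a}(x+1)^{b}$ involves only the linear primes, this gives $\sigma(M^{2h}) = \prod_{1 < e \mid 2h+1}\Phi_e(M)$. Hence it suffices to produce, for a single divisor $e > 1$ of $2h+1$, a non-Mersenne irreducible factor of $\Phi_e(M)$, and I would argue by contradiction throughout: assume every irreducible factor of the relevant $\Phi_e(M)$ is a Mersenne prime and derive an impossibility.

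For part (ii) I take $e = p$, so that $\Phi_p(M) = \sigma(M^{p-1})$ divides $\sigma(M^{2h})$, and reduce to finding a non-Mersenne factor of $\Phi_p(M)$. The structural input is that at any root $\beta$ of an irreducible factor $Q$ of $\Phi_p(M)$ one has $M(\beta)^{p}=1$ and $M(\beta)\neq 1$, so $\zeta := M(\beta)$ is a primitive $p$-th root of unity; since $M$ has coefficients in $\F_2$, the Frobenius carries $\beta$ to a root lying over $\zeta^{2}$, and the orbit of $\beta$ maps $\F_2$-equivariantly onto the squaring orbit of $\zeta$, which has size $\mathrm{ord}_p(2)$. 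Thus $\deg Q$ is divisible by $r := \mathrm{ord}_p(2)$. This is exactly what separates the two regimes of the hypothesis: when $8 \mid r$ every hypothetical Mersenne factor has degree divisible by $8$, so its root field contains $\F_{2^{8}}$; when $p = 2^{q}-1 \neq 7$ is a Mersenne prime one has $r = q$ and the minimal-degree factors have all their roots in $\F_{2^{q}}$ (so that $\beta$ and $\beta+1$ lie there), whose multiplicative group has prime order $p$.

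The engine for the contradiction is a pair of monomial relations forced at a root $\beta$ of a Mersenne factor $Q = 1 + x^{c}(x+1)^{d}$: the relation $\beta^{c}(\beta+1)^{d} = 1$ coming from $Q$ being Mersenne, and $1 + \beta^{a}(\beta+1)^{b} = \zeta$ of order $p$ coming from $Q \mid \Phi_p(M)$. When the ambient field has prime multiplicative order, as in the Mersenne-number regime, $\beta+1$ is automatically a power $\beta^{s}$ of $\beta$, so both relations collapse to congruences on exponents modulo $p$, and I can test these against the residue constraints that Mersenne primes satisfy modulo the small Mersenne primes whose splitting fields also have prime order, for instance $Q(\omega) = 1 + \omega^{c-d} \in \{\omega,\omega^{2}\}$ at a root $\omega$ of $M_1$ since $\omega(\omega+1)=1$. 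Propagating these congruences over a hypothetical all-Mersenne factorization and comparing with the directly computed value of $\sigma(M^{p-1})$ at the same roots of unity should force the exponent system to be unsolvable once $M \notin {\mathcal{M}}$ and $p \neq 7$; showing that the analogous incompatibility persists in the $8 \mid r$ regime, where the root fields no longer have prime order and one must instead exploit that every factor degree is divisible by $8$ together with the placement of $\zeta$ in $\F_{2^{8}} \subseteq \F_{2^{r}}$, is the genuinely hard step and is precisely the new factorization result on which the theorem rests.

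Part (i) I would handle by the same reduction but with $M$ small and explicit, so that $\Phi_p(M)$ can be analysed concretely for every odd prime $p$; since $2h+1 > 1$ always has an odd prime divisor, this settles all admissible $h$ simultaneously. The seed computations are direct: $\sigma(M_1^{2}) = x^{4}+x+1$ is itself an irreducible non-Mersenne prime, and $\sigma(M_3^{2}) = (x^{2}+x+1)(x^{6}+x^{5}+x^{4}+x^{2}+1)$, where the sextic factor is irreducible and non-Mersenne because $(x^{6}+x^{5}+x^{4}+x^{2}+1)+1 = x^{2}(x+1)M_2$ carries the nonlinear prime $M_2$. The restriction $h \geq 2$ for $M \in \{M_2,\overline{M_2}\}$ is not an artefact of the method but a genuine exception: one computes $\sigma(M_2^{2}) = M_1\,\overline{M_3}$, a product of two Mersenne primes, so for $2h+1 = 3$ no non-Mersenne factor exists, whereas for $h \geq 2$ the number $2h+1$ either has a prime factor $\geq 5$ or is a proper power of $3$, and then a larger cyclotomic piece $\Phi_p(M_2)$ or $\Phi_9(M_2)$ supplies the required non-Mersenne prime. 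I expect the main obstacle here, as in part (ii), to be the uniform control of $\Phi_p(M)$ over the infinitely many primes $p$, which reduces once more to the root-level incompatibility specialised to these particular $M$.
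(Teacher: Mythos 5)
There is a fundamental problem here that precedes any line-by-line assessment: the statement you were asked to prove is a \emph{conjecture} in this paper, not a theorem. The paper recalls it from \cite{Gall-Rahav12}, proves only the partial cases collected in Theorem \ref{result0}, and states explicitly in its closing Remarks that the case ``$M \not\in \mathcal{M}$ and $2h+1$ divisible by $p \in \{5,7\}$ or by $p>7$ which is neither Mersenne prime nor Fermat prime'' remains open. Your proposal, by its own admission (``is the genuinely hard step,'' ``I expect the main obstacle\ldots''), does not close these cases either, and even if every sketched step were filled in it would cover only the regimes of Theorem \ref{result0}. Concretely: for $M \not\in \mathcal{M}$ and $2h+1 = 5$ nothing in your scheme applies, since $5$ is not a Mersenne number and $ord_5(2) = 4$ is not divisible by $8$; the same holds for $p = 7$ and for all primes $p$ with $ord_p(2) \not\equiv 0 \bmod 8$ that are not Mersenne. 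So the proposal cannot be a proof of the stated conjecture, and no complete argument along these lines is currently known.

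Measured instead against Theorem \ref{result0} (which is evidently what your parts (i) and (ii) are aiming at), the concrete portions of your sketch do track the paper: the cyclotomic reduction $\sigma(M^{2h}) = \prod_{1<e \mid 2h+1}\Phi_e(M)$ is the paper's Lemma \ref{p-reduction}; your Frobenius-orbit argument that $ord_p(2)$ divides the degree of every irreducible factor of $\Phi_p(M)$ is Lemma \ref{ord(2)divise} (your version is in fact slicker, being unconditional); and your seed computations, including $\sigma(M_2^2) = M_1\overline{M_3}$, check out. But where you go beyond these facts the argument is aspiration, and in two places it misses the paper's actual mechanisms. First, in the $8 \mid ord_p(2)$ regime you declare the incompatibility ``the genuinely hard step,'' whereas it is immediate: no Mersenne prime has degree divisible by $8$ (Lemma \ref{mersennedeg8k}, via the parity of $\omega(Q)$ from \cite{Gall-Rahav-mersenn}), so Lemma \ref{ord(2)divise} already yields the contradiction; conversely, you omit the genuinely delicate Mersenne case $p=3$, where the paper invokes \cite[Theorem 1.4]{Gall-Rahav-mersenn} when $\omega(\sigma(M^2))=2$ and otherwise proves $\alpha_3(W)=1$ for $W = \sigma(\sigma(M^2))$, contradicting that $W$ must be a split square $x^u(x+1)^v$ with $u,v$ even under the all-Mersenne assumption. (For Mersenne $p \geq 31$ the paper is constructive and needs no contradiction at all: Corollary \ref{anydivides} plus Lemma \ref{phiandirreduc} produce a non-Mersenne factor directly.) Second, for part (i) your plan to ``analyse $\Phi_p(M)$ concretely for every odd prime $p$'' with $M \in \{M_1, M_3, \overline{M_3}\}$ is an infinite task that two sample factorizations do not begin to close; the paper deliberately avoids fixing a prime divisor of $2h+1$ there and argues for all $h$ simultaneously through the coefficient calculus on $U_{2h} = \sigma(\sigma(M^{2h}))$ (Lemmas \ref{lesalfal} and \ref{alfasigmM2h}, with the case analysis of \cite{Gall-Rahav13}), showing $\alpha_3(U_{2h}) = 1$ or $\alpha_5(U_{2h}) = 1$ against squareness. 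That split-square contradiction device is the engine of the paper and is entirely absent from your proposal.
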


\begin{lemma} [Theorem 1.4 in \cite{Gall-Rahav-mersenn}] \label{oldresult3}
Let $h\in \N^*$ such that $p=2h+1$ is prime, $M$ a Mersenne prime such that $M \not\in \{M_2, \overline{M_2}\}$ and $\omega(\sigma(M^{2h})) = 2$. Then, $\sigma(M^{2h})$ is divisible by a non Mersenne prime.
\end{lemma}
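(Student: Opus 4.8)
The plan is to reduce the statement to a factorization question for a cyclotomic value and then argue by contradiction. Since $p=2h+1$ is prime and $M$ is irreducible, the divisors of $M^{2h}$ are the powers $M^k$, so
\[
\sigma(M^{2h})=1+M+\cdots+M^{2h}=\frac{M^{p}+1}{M+1}=\Phi_p(M),
\]
where $\Phi_p$ is the $p$-th cyclotomic polynomial (recall $M^p+1=(M+1)\Phi_p(M)$ over $\F_2$). Assume, for contradiction, that every prime divisor of $\sigma(M^{2h})$ is Mersenne; since $\omega(\sigma(M^{2h}))=2$ we may write $\sigma(M^{2h})=P^{r}Q^{s}$ with $P\ne Q$ Mersenne primes and $r,s\ge 1$, and the goal is to show this cannot happen.

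First I would extract structural constraints. Over $\F_2$ the separable polynomial $\Phi_p$ splits into $f:=(p-1)/e$ distinct irreducible factors, each of degree $e:=\mathrm{ord}_p(2)$, say $\Phi_p=g_1\cdots g_f$. The polynomials $g_i(M)$ are pairwise coprime (a common prime divisor would force a common root of the distinct $g_i,g_j$), so $\omega(\sigma(M^{2h}))=\sum_i\omega(g_i(M))\ge f$; hence $\omega=2$ forces $f\le 2$, i.e. $\mathrm{ord}_p(2)\ge (p-1)/2$, and in the case $f=2$ each $g_i(M)$ is itself a prime power ($P^{r}$ or $Q^{s}$). Moreover, reducing modulo any prime factor $N$ shows that the image of $M$ in $\F_{2^{\deg N}}$ is a primitive $p$-th root of unity, so $e\mid\deg N$ for $N\in\{P,Q\}$. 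Next I would use the derivative identity $\sigma(M^{2h})'=M'\cdot\sigma(M^{h-1})^2$: since $M'=x^{a-1}(x+1)^{b-1}\bigl((a+b)x+a\bigr)$ has only $x$ and $x+1$ as prime factors, while $\sigma(M^{2h})(0)=\sigma(M^{2h})(1)=1$ in $\F_2$ (a sum of $p$ ones, $p$ odd), the factor $M'$ is coprime to $\sigma(M^{2h})$; therefore $P^{r-1}Q^{s-1}\mid\sigma(M^{h-1})^2$, which bounds $r,s$ in terms of $\deg\sigma(M^{h-1})=(h-1)\deg M$. Finally I would evaluate at a primitive cube root of unity $j$ (with $j^2+j+1=0$, i.e. $M_1(j)=0$): every Mersenne prime $N\ne M_1$ satisfies $N(j)=1+j^{a+2b}\in\{j,j^2\}$, and a direct computation gives $\sigma(M^{2h})(j)$ in terms of $p\bmod 3$ and of $M(j)$, so comparing $\sigma(M^{2h})(j)=P(j)^{r}Q(j)^{s}$ constrains the configuration and in particular controls whether $M_1$ can appear among $P,Q$.

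The hard part is the endgame: turning these constraints into a genuine contradiction. I expect to combine the degree equation $r\deg P+s\deg Q=(p-1)\deg M$ (with $e\mid\deg P$ and $e\mid\deg Q$) with the exponent bounds above to reduce to finitely many shapes, and then to show that in each the equality $g_i(M)=P^{r}$ (resp. $g_1(M)=P^{r}Q^{s}$ when $f=1$ and $2$ is a primitive root modulo $p$) forces $P+1$ (or $Q+1$) to acquire an irreducible factor different from $x$ and $x+1$, i.e. $P$ (or $Q$) fails to be Mersenne. The case $f=1$ and the small case $p=3$ (that is $h=1$), where $M_1$ is forced to divide $\sigma(M^2)$, would be treated separately. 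The role of the hypothesis $M\notin\{M_2,\overline{M_2}\}$ is precisely to exclude the exceptional cubic Mersenne primes, for which all‑Mersenne factorizations of $\sigma(M^{2h})$ genuinely occur; this is where the argument is most delicate and where a finite check of the low‑degree Mersenne primes seems unavoidable.
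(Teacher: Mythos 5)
You have assembled a correct and sensible frame, but the proof itself is missing: everything after ``the hard part is the endgame'' is a plan, not an argument. The preliminary reductions are all fine --- $\sigma(M^{2h})=\Phi_p(M)$; the splitting of $\Phi_p$ over $\F_2$ into $f=(p-1)/e$ distinct irreducible factors of degree $e=ord_p(2)$; the pairwise coprimality of the $g_i(M)$, whence $f\le 2$; the divisibility $e\mid \deg N$ for every prime factor $N$; the identity $\sigma(M^{2h})'=M'\,\sigma(M^{h-1})^2$ with $\gcd(M',\sigma(M^{2h}))=1$; the evaluation at a cube root of unity. But neither these facts nor their combination as you describe it ever produces the contradiction, and two of your three ``endgame'' levers are idle. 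Your derivative identity actually yields that $\sigma(M^{2h})$ is square-free (a repeated factor would divide $\sigma(M^{h-1})$, giving an element of order $p$ whose order also divides $h$; cf.\ Lemma \ref{omegasigmM2h}), so $r=s=1$ and your exponent bounds say nothing; and the degree equation $\deg P+\deg Q=(p-1)\deg M$ with $e\mid\deg P,\deg Q$ admits infinitely many configurations, since neither $p$ nor $\deg M$ is bounded, so no reduction ``to finitely many shapes'' occurs. Finally, the step you would need --- that $g_i(M)=P$ forces $P+1$ to acquire an irreducible factor other than $x$ and $x+1$ --- is essentially the lemma itself restated, and you supply no mechanism for it beyond an acknowledged unspecified ``finite check''. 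That is the genuine gap.

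For comparison: the present paper does not reprove this statement; it recalls it as Theorem 1.4 of \cite{Gall-Rahav-mersenn}, whose method is the one extended throughout Section \ref{proof0} here, and it is quite different from your route. One assumes $\sigma(M^{2h})=P_1P_2$ with both $P_i=1+x^{a_i}(x+1)^{b_i}$ Mersenne and applies $\sigma$ a second time: $U_{2h}:=\sigma(\sigma(M^{2h}))=x^{a_1+a_2}(x+1)^{b_1+b_2}$ splits, and parity results (Corollary \ref{sommaieven}) show both exponents are even, so $U_{2h}$ is a square and $\alpha_l(U_{2h})=0$ for every odd $l$; on the other hand, the low-order coefficients $\alpha_l(U_{2h})$ can be computed directly from $M$ (Lemmas \ref{lesalfal} and \ref{alfasigmM2h}), and one finds $\alpha_l(U_{2h})=1$ for a suitable odd $l$ --- the contradiction. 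The engine there is the identity $\sigma(1+x^c(x+1)^d)=x^c(x+1)^d$, the one property special to Mersenne primes, which your sketch never exploits; your cyclotomic, derivative and $\F_4$-evaluation constraints, while true, do not substitute for it. As it stands the proposal cannot be credited as a proof.
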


\subsection{Proof of Theorem \ref{result3}} \label{caseven}

We set $\displaystyle{A:= x^a(x+1)^b \prod_{i \in I}  P_i^{h_i} = A_1 A_2 \in \F_2[x]}$, where $a,b, h_i \in \N$, $P_i$ is a Mersenne prime, $\text{$\displaystyle{A_1 = x^a(x+1)^b \prod_{P_i \in {\mathcal{M}}} P_i^{h_i}}$  and $\displaystyle{A_2 = \prod_{P_j \not\in {\mathcal{M}}} P_j^{h_j}}$.}$\\
 We suppose that $A$ is indecomposable (unitary) perfect.
\subsubsection{Case of perfect polynomials} \label{case-perfect}

\begin{lemma} [Theorem 1.1 in \cite{Gall-Rahav12}] \label{oldresult1}
If $h_i = 2^{n_i}-1$ for any $i \in I$, then $A \in {\mathcal{P}}$.
\end{lemma}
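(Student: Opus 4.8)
The plan is to turn the hypothesis $h_i = 2^{n_i}-1$ into a purely \emph{even} contribution via the transfer identity over $\F_2$: for every polynomial $P$,
$$\sigma(P^{2^n-1}) = 1 + P + \cdots + P^{2^n-1} = \frac{(P+1)^{2^n}}{P+1} = (P+1)^{2^n-1},$$
because $P^{2^n}+1 = (P+1)^{2^n}$ in characteristic $2$. Since each $P_i = 1 + x^{a_i}(x+1)^{b_i}$ satisfies $P_i+1 = x^{a_i}(x+1)^{b_i}$, this gives $\sigma(P_i^{h_i}) = x^{a_ih_i}(x+1)^{b_ih_i}$, which has no Mersenne factor. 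Using multiplicativity, the equation $\sigma(A)=A$ becomes
$$x^a(x+1)^b \prod_i P_i^{h_i} = \sigma(x^a)\,\sigma\big((x+1)^b\big)\,x^{\sum_i a_ih_i}(x+1)^{\sum_i b_ih_i}.$$
First I would compare odd parts: the whole Mersenne part $\prod_i P_i^{h_i}$ must equal the product of the odd parts of $\sigma(x^a)$ and of $\sigma((x+1)^b)=\overline{\sigma(x^b)}$. Comparing the $x$- and $(x+1)$-adic valuations then gives the two balance equations $a = (2^{s'}-1)+\sum_i a_ih_i$ and $b = (2^{s}-1)+\sum_i b_ih_i$, where $2^{s}\,\|\,(a+1)$ and $2^{s'}\,\|\,(b+1)$.

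Next I would analyse the factorization $\sigma(x^a) = (x^{a+1}+1)/(x+1)$. Writing $a+1 = 2^{s}t$ with $t$ odd, one gets $\sigma(x^a) = (x+1)^{2^{s}-1}\prod_{d\mid t,\,d>1}\Phi_d^{2^{s}}$ with $\Phi_d$ the $d$-th cyclotomic polynomial, each $\Phi_d$ (for odd $d$) squarefree over $\F_2$. The requirement that the odd part of $A$ consist of Mersenne primes, together with the odd-part identity, shows that every irreducible factor of each such $\Phi_d$ is a Mersenne prime dividing $A$; the genuine content, equivalent to the non-$\mathcal{M}$ part $A_2$ being trivial, is that these factors actually lie in $\mathcal{M}$, which I postpone to the boundedness step. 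Because the relevant $\Phi_d$ are squarefree and pairwise coprime, any Mersenne prime divides $\sigma(x^a)$ with multiplicity exactly $2^{s}$ (and $\sigma((x+1)^b)$ with multiplicity $2^{s'}$), so the odd-part identity forces each $h_i \in \{2^{s},\,2^{s'},\,2^{s}+2^{s'}\}$. Imposing that $h_i=2^{n_i}-1$ is odd then leaves only two possibilities: $h_i=1$ (when $s=0$ or $s'=0$), or the coincidence $h_i = 1+2=3$ with the prime dividing both $\sigma(x^a)$ and $\sigma((x+1)^b)$. I would dispose of the coincidence separately, showing it yields no perfect polynomial, so that in fact $h_i=1$ for all $i$, matching the exponents in $T_1,\dots,T_9$.

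The main obstacle is \textbf{boundedness}: I must show that the Mersenne condition on the $\Phi_d$ allows only finitely many $a$ (hence, by the $x\leftrightarrow x+1$ symmetry, finitely many $b$). Here $\Phi_3 = M_1$, $\Phi_5 = M_3$ and $\Phi_7 = M_2\overline{M_2}$ do split into Mersenne primes — accounting exactly for $\mathcal{M}$ — whereas already $\Phi_9 = x^6+x^3+1 = 1+x^3(x+1)(x^2+x+1)$ is irreducible of degree $6$ and not of the form $1+x^{a}(x+1)^{b}$, so any $a$ with $9\mid(a+1)$ is excluded. The crux is to prove that no odd $d>1$ outside $\{3,5,7\}$ has all its irreducible factors Mersenne; I would attack this through the order/degree data of a root of such a factor (it has multiplicative order $d$ and lies in $\F_{2^{\mathrm{ord}_d(2)}}$) measured against the rigid shape $1+x^{a_i}(x+1)^{b_i}$. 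This step is the most delicate and the one tied to the Mersenne-prime conjecture. Once $a$ and $b$ are bounded, the balance equations together with the odd-part identity form a finite system which I would solve explicitly; the indecomposability hypothesis removes the coprime products, leaving precisely $A\in\{T_1,\dots,T_9\}=\mathcal{P}$.
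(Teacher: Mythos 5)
You are attempting to reprove a statement that the paper itself does not prove: Lemma \ref{oldresult1} is imported verbatim as Theorem 1.1 of \cite{Gall-Rahav12}, so the only fair comparison is with the method of that earlier paper, which is the Canaday-style analysis you are reconstructing. Your opening moves are correct and match that method: the identity $\sigma(P^{2^n-1})=(P+1)^{2^n-1}$ in characteristic $2$, the observation that each $\sigma(P_i^{h_i})$ is then purely even, the valuation balance equations, the cyclotomic factorization $\sigma(x^a)=(x+1)^{2^s-1}\prod_{d\mid t,\,d>1}\Phi_d^{2^s}$ with $a+1=2^st$, and the resulting multiplicity constraint $h_i\in\{2^s,2^{s'},2^s+2^{s'}\}$, which together with $h_i=2^{n_i}-1$ leaves only $h_i\in\{1,3\}$. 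I checked that this skeleton, once the admissible $t,t'$ are restricted to $\{1,3,5,7\}$, does reproduce exactly $T_1,\dots,T_9$ (plus the trivial $(x^2+x)^{2^s-1}$ when no Mersenne factor is present), with the $h_i=3$ coincidence killed by the balance equations, as you predict.

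The genuine gap is the step you yourself flag as the crux: that all irreducible factors of the relevant $\Phi_d$ being Mersenne forces $t\in\{1,3,5,7\}$. You do not prove it — you give the single example $\Phi_9$ and a one-sentence plan — and, worse, you miscalibrate its status by calling it ``tied to the Mersenne-prime conjecture.'' It is not: the needed statement is precisely Canaday's Theorem 8 of 1941 (if $\sigma(x^a)$ has only Mersenne prime factors then $a\in\{2,4,6\}$, equivalently $t\in\{1,3,5,7\}$ in your normalization), an unconditional classical result that the present paper quotes as Lemma \ref{canadayperf}(i). The open Conjecture \ref{oldconject} concerns $\sigma(M^{2h})$ for $M$ itself Mersenne, and under your hypothesis $h_i=2^{n_i}-1$ that situation never arises, since every $\sigma(P_i^{h_i})$ splits — this is exactly why Theorem 1.1 of \cite{Gall-Rahav12} was provable in 2012 without the new Theorem \ref{result0}. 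So your proposal is fillable by citing Canaday (after which the remaining finite case check over $(t,t')\in\{1,3,5,7\}^2$, which you only assert, is routine), but as written it leaves the hardest step open and misidentifies it as depending on an unresolved conjecture.
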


We get from Theorem 8 in \cite{Canaday} and from Theorem \ref{result0}:
\begin{lemma} \label{canadayperf}
i) If $\sigma(x^a)$ is divisible only by Mersenne primes, then $a \in \{2,4,6\}$ and all its divisors lie in ${\mathcal{M}}$.\\
ii) Let $M \in {\mathcal{M}}$ such that $\sigma(M^a)$ is divisible only by Mersenne primes, then $a=2$ and $M \in \{M_2, \overline{M_2}\}$.
\end{lemma}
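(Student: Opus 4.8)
The plan is to dispose of the two statements separately, using Theorem~\ref{result0} for (ii) and Theorem~8 in~\cite{Canaday} for (i), after a common parity reduction. First I would record that, over $\F_2$, $\sigma(x^a)=1+x+\cdots+x^a=(x^{a+1}+1)/(x+1)$ and $\sigma(M^a)=1+M+\cdots+M^a$, and then evaluate at $x=0$ and $x=1$. Every Mersenne prime satisfies $M(0)=M(1)=1$, so $\sigma(M^a)(0)=\sigma(M^a)(1)=a+1\bmod 2$, while $\sigma(x^a)(0)=1$ and $\sigma(x^a)(1)=a+1\bmod 2$. Since Mersenne primes are nonlinear, the hypothesis that $\sigma$ has only Mersenne divisors forbids the linear factors $x$ and $x+1$; hence in both parts $a$ must be even (and $a\ge 1$, since otherwise $\sigma=1$ carries no Mersenne divisor at all). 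Write $a=2h$.

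\emph{Proof of (ii).} With $a=2h$ I would apply Theorem~\ref{result0}(i) directly to $\sigma(M^{2h})$: it exhibits a non-Mersenne prime factor whenever $M\in\{M_1,M_3,\overline{M_3}\}$ (for every $h\ge 1$) and whenever $M\in\{M_2,\overline{M_2}\}$ with $h\ge 2$. The only surviving case is therefore $M\in\{M_2,\overline{M_2}\}$ with $h=1$, i.e. $a=2$. To close it I would check by one polynomial division that $\sigma(M_2^2)=1+M_2+M_2^2=M_1\,\overline{M_3}$ and, applying the involution $x\mapsto x+1$, that $\sigma(\overline{M_2}^{\,2})=M_1\,M_3$; both factorizations involve only elements of $\mathcal{M}$, which is exactly the asserted conclusion.

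\emph{Proof of (i).} Put $n=a+1$, an odd integer $\ge 3$. Over $\F_2$ one has $\sigma(x^a)=(x^n+1)/(x+1)=\prod_{d\mid n,\ d>1}\Phi_d$, where $\Phi_d$ is the $d$-th cyclotomic polynomial, each $\Phi_d$ splitting into irreducibles of degree $\operatorname{ord}_d(2)$. In particular $\Phi_n$ is itself a factor of $\sigma(x^a)$, so the hypothesis forces every irreducible factor of $\Phi_n$ to be a Mersenne prime. This is precisely what Theorem~8 in~\cite{Canaday} controls: the odd $d$ for which $\Phi_d$ has only Mersenne prime factors are exactly $d\in\{3,5,7\}$, with $\Phi_3=M_1$, $\Phi_5=M_3$, $\Phi_7=M_2\overline{M_2}$, whereas for every other odd $d$ a non-Mersenne irreducible appears (for instance $\Phi_9=x^6+x^3+1$ is irreducible and non-Mersenne, and $\Phi_{15}=(x^4+x^3+1)(x^4+x+1)$ has the non-Mersenne factor $x^4+x+1$). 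Applying the classification to $d=n$ gives $n\in\{3,5,7\}$, i.e. $a\in\{2,4,6\}$, and the three displayed factorizations show all divisors lie in $\mathcal{M}$.

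\emph{Main obstacle.} The delicate step is the classification underlying (i): proving that $\Phi_d$ acquires a non-Mersenne irreducible factor for every odd $d\notin\{3,5,7\}$. This is a rigidity statement about which cyclotomic factors over $\F_2$ can have the very special shape $1+x^c(x+1)^e$, and it is exactly where Theorem~8 in~\cite{Canaday} does the real work; reducing the problem to this structural classification, rather than testing exponents one by one, is the crux. Part~(ii), by contrast, is routine once Theorem~\ref{result0} is in hand, the only computation being the two explicit factorizations of $\sigma(M_2^2)$ and $\sigma(\overline{M_2}^{\,2})$.
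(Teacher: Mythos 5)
Your proof is correct and follows essentially the route the paper intends: the paper gives no written proof of this lemma, deriving it exactly as you do from Canaday's Theorem~8 for part (i) and from Theorem~\ref{result0}(i) plus the residual $h=1$ computation for part (ii); the parity reduction via evaluation at $x=0$ and $x=1$ is the standard implicit step. One caution on attribution: Canaday's Theorem~8 is a statement about the whole polynomial $\sigma(x^{2n})=1+x+\cdots+x^{2n}$, not a classification of individual cyclotomic factors, so your restatement ``$\Phi_d$ has only Mersenne prime factors iff $d\in\{3,5,7\}$'' is strictly stronger than what the citation literally provides --- for instance, applied to $\sigma(x^{44})=\Phi_3\Phi_5\Phi_9\Phi_{15}\Phi_{45}$ the theorem cannot isolate a non-Mersenne factor inside $\Phi_{45}$ specifically, since $\Phi_9$ already carries one. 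This costs you nothing here, because the lemma's hypothesis makes every prime factor of the full product $\sigma(x^a)$ Mersenne, so Canaday's theorem applies directly to $\sigma(x^a)$ and yields $a\in\{2,4,6\}$ without the cyclotomic detour; but the detour as written overstates the cited result. Incidentally, your factorization $\sigma(M_2^2)=M_1\overline{M_3}$ is the correct one: expanding gives $1+M_2+M_2^2=x^6+x^3+x^2+x+1=(x^2+x+1)(x^4+x^3+1)$, so the paper's inline claim $\sigma(M_2^{2})=M_1M_3$ in Section~\ref{caseM2} is a typo (harmless, since both assertions put all factors in $\mathcal{M}$).
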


\begin{lemma} \label{divisorsigmA1}
If $P$ is a Mersenne prime divisor of $\sigma(A_1)$, then $P, \overline{P} \in \{M_1,M_2, M_3\}$.
\end{lemma}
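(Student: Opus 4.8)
The plan is to turn perfectness into a divisibility that forces the odd factors of $\sigma(A_1)$ to be Mersenne, and then to reduce each local factor $\sigma(Q^e)$ to an even-exponent one, where Lemma \ref{canadayperf} applies directly.

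First I would record the global constraint. Since $A$ is perfect and $A=A_1A_2$ with $\gcd(A_1,A_2)=1$ (the primes of $A_1$ are $x,x+1$ and the $P_i\in\mathcal M$, those of $A_2$ the $P_j\notin\mathcal M$), multiplicativity gives $A_1A_2=\sigma(A)=\sigma(A_1)\sigma(A_2)$, whence $\sigma(A_1)\mid A$. Consequently every prime factor of $\sigma(A_1)$ divides $A$, so every \emph{odd} prime factor of $\sigma(A_1)$ is one of the Mersenne primes $P_i$. Writing $\sigma(A_1)=\sigma(x^a)\,\sigma((x+1)^b)\prod_{P_i\in\mathcal M}\sigma(P_i^{h_i})$ by multiplicativity, a Mersenne divisor $P$ of $\sigma(A_1)$ must divide some factor $\sigma(Q^e)$ with $Q\in\{x,x+1\}\cup\mathcal M$. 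Thus it suffices to show that the Mersenne divisors of each such $\sigma(Q^e)$ lie in $\mathcal M$.

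The key step is a reduction to even exponents. Write $e+1=2^s t$ with $t$ odd. Over $\F_2$ one has $u^{2^s}+1=(u+1)^{2^s}$ and $Q^t+1=(Q+1)\,\sigma(Q^{t-1})$, so
\[
\sigma(Q^e)=\frac{Q^{e+1}+1}{Q+1}=(Q+1)^{2^s-1}\,\sigma(Q^{t-1})^{2^s}.
\]
For every admissible $Q$ the polynomial $Q+1$ is a product of the linear polynomials $x,x+1$ (indeed $Q+1=x^{c}(x+1)^{d}$ when $Q=1+x^{c}(x+1)^{d}\in\mathcal M$), so $(Q+1)^{2^s-1}$ carries no Mersenne factor. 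Hence the Mersenne divisors of $\sigma(Q^e)$ coincide with those of $\sigma(Q^{t-1})$, where now $t-1$ is even. Then I would invoke Lemma \ref{canadayperf}: since $t-1$ is even, $\sigma(Q^{t-1})$ has no linear factor and divides $A$, hence is divisible only by Mersenne primes. If $Q\in\{x,x+1\}$, part (i) gives all its divisors in $\mathcal M$; if $Q\in\mathcal M$, part (ii) forces $t-1=2$ and $Q\in\{M_2,\overline{M_2}\}$, and the direct computations $\sigma(M_2^2)=M_1\overline{M_3}$, $\sigma(\overline{M_2}^2)=M_1M_3$ show the divisors again lie in $\mathcal M$ (the case $t-1=0$ giving $\sigma(Q^{t-1})=1$). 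In every case $P\in\mathcal M$, which, as $\mathcal M$ is the conjugacy closure of $\{M_1,M_2,M_3\}$, is the stated conclusion.

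The main obstacle is the passage from an arbitrary, possibly large and odd, exponent $h_i$ to a tractable situation: a priori $\sigma(P_i^{h_i})$ could acquire large Mersenne factors. The Frobenius reduction above is precisely what makes the problem finite, by stripping off the purely linear factor $(Q+1)^{2^s-1}$ and collapsing an odd exponent to an even one, after which Lemma \ref{canadayperf} (which itself rests on Theorem \ref{result0}) closes each case. A secondary point requiring care is verifying, at each application, that the relevant factor really is ``divisible only by Mersenne primes''; this is exactly what $\sigma(A_1)\mid A$ together with the evenness of the reduced exponent guarantees.
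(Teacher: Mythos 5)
Your proof is correct and takes essentially the same route as the paper: split $\sigma(A_1)$ by multiplicativity and funnel each factor $\sigma(Q^e)$, $Q\in\{x,x+1\}\cup\mathcal{M}$, into Lemma \ref{canadayperf} (whose conclusion, as you rightly read it, is $P\in\mathcal{M}$, the conjugation closure of $\{M_1,M_2,M_3\}$). The only difference is that you spell out the even-exponent reduction $\sigma(Q^{2^st-1})=(Q+1)^{2^s-1}\sigma(Q^{t-1})^{2^s}$ ($t$ odd), which the paper's one-line proof leaves implicit (it surfaces only later, in Proposition \ref{caseperfect}); since $\sigma(x^a)$ and $\sigma(P_i^{h_i})$ acquire linear factors when the exponent is odd, Lemma \ref{canadayperf} does not literally apply to them, and your reduction — together with the correct computations $\sigma(M_2^2)=M_1\overline{M_3}$ and $\sigma(\overline{M_2}^2)=M_1M_3$ — is exactly what licenses its use, so your write-up is, if anything, more complete than the paper's.
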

\begin{proof}
We apply Lemma \ref{canadayperf}. If $P$ divides $\sigma(x^a) \cdot \sigma((x+1)^b)$, then $P \in {\mathcal{M}}$. If $P$ divides $\sigma(P_i^{h_i})$ with $P_i \in {\mathcal{M}}$, then $P_i \in \{M_2, \overline{M_2} \}$ and $P, \overline{P} \in \{M_1, M_3\}$.
\end{proof}

\begin{lemma} \label{gcdMjsigmA1}
i) For any $P_j \not\in {\mathcal{M}}$, one has: $\gcd(P_j^{h_j},\sigma(A_1)) = 1$ and $h_j = 0$.
ii) $A=A_1$.
\end{lemma}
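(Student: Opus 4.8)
The plan is to read off the divisibility assertion of (i) directly from Lemma \ref{divisorsigmA1}, and then to obtain $h_j=0$ (equivalently (ii), $A=A_1$) by combining the resulting coprimality with the indecomposability of $A$. First I would dispose of the $\gcd$ statement. Every $P_j\not\in\mathcal{M}$ is a Mersenne prime, so by Lemma \ref{divisorsigmA1} it cannot divide $\sigma(A_1)$, since each Mersenne prime dividing $\sigma(A_1)$ lies in $\mathcal{M}$. Hence $\gcd(P_j^{h_j},\sigma(A_1))=1$, and because $A_2=\prod_{P_j\not\in\mathcal{M}}P_j^{h_j}$ is a product of exactly such primes, this upgrades to $\gcd(A_2,\sigma(A_1))=1$.

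Next I would bring in perfectness. As $A_1$ and $A_2$ are built from disjoint sets of primes, $\gcd(A_1,A_2)=1$, so multiplicativity of $\sigma$ turns $\sigma(A)=A$ into $\sigma(A_1)\sigma(A_2)=A_1A_2$. In particular $\sigma(A_1)\mid A_1A_2$, and since $\gcd(\sigma(A_1),A_2)=1$ this forces $\sigma(A_1)\mid A_1$. The key observation is that $\sigma$ preserves degree over $\F_2$: each factor $\sigma(P^{h})=1+P+\cdots+P^{h}$ is monic of degree $h\deg P=\deg(P^h)$, whence $\deg\sigma(A_1)=\deg A_1$ with both monic. A monic divisor of equal degree must be equal, so $\sigma(A_1)=A_1$. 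Substituting back into $\sigma(A_1)\sigma(A_2)=A_1A_2$ yields $\sigma(A_2)=A_2$; thus $A_1$ and $A_2$ are coprime perfect polynomials.

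Finally I would invoke indecomposability. Since $A$ is even we have $a\geq 1$ or $b\geq 1$, so $A_1=x^a(x+1)^b\prod_{P_i\in\mathcal{M}}P_i^{h_i}$ is a nonconstant perfect polynomial. Were $A_2$ also nonconstant, $A=A_1A_2$ would exhibit $A$ as a product of two coprime nonconstant perfect polynomials, contradicting the standing assumption that $A$ is indecomposable. Therefore $A_2=1$, i.e.\ $h_j=0$ for every $P_j\not\in\mathcal{M}$, which completes (i) and gives (ii), $A=A_1$.

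The only genuinely delicate point is passing from $\sigma(A_1)\mid A_1$ to $\sigma(A_1)=A_1$; this rests entirely on the degree‑preserving property of $\sigma$ over $\F_2$, and once it is in place the coprimality splitting of the perfect equation together with the definition of indecomposability finishes the argument with no further computation. Notably, this lemma does not itself need the full strength of Theorem \ref{result0}; that machinery is reserved for establishing Lemma \ref{divisorsigmA1} and the later propositions that control which Mersenne primes may occur in $\sigma(A_1)$.
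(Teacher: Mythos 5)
Your proof is correct and takes essentially the same route as the paper: both get the coprimality $\gcd(A_2,\sigma(A_1))=1$ from the content of Lemma \ref{divisorsigmA1} (which the paper simply rederives inline), then split the perfect equation $\sigma(A_1)\sigma(A_2)=A_1A_2$ using that coprimality, and finally invoke the indecomposability of $A$ (with $A_1$ nonconstant since $A$ is even) to force $A_2=1$. The only, immaterial, difference is the direction of the splitting --- the paper collects $P_j^{h_j}\mid\sigma(A_2)$ to get $A_2\mid\sigma(A_2)$, while you deduce $\sigma(A_1)\mid A_1$ first --- and your explicit degree-preservation argument for upgrading divisibility to equality makes precise a step the paper leaves implicit.
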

\begin{proof}
i): Let $P_j \not\in {\mathcal{M}}$ and $Q_i \in {\mathcal{M}}$. Then, $P_j$ divides neither $\sigma(x^a)$, $\sigma((x+1)^b)$ nor $\sigma(Q_i^{h_i})$. Thus $\gcd(P_j^{h_j}, \sigma(A_1)) = 1$. \\
Observe that $P_j^{h_j}$ divides $\sigma(A_2)$ because $P_j^{h_j}$ divides $A = \sigma(A)=\sigma(A_1) \sigma(A_2)$. Hence, $A_2$ divides $\sigma(A_2)$. So, $A_2$ is perfect and it is equal to $1$ from the indecomposibility of $A$.\\
ii) follows from i).
\end{proof}

\begin{proposition} \label{caseperfect}
If $A_1$ is perfect, then $h_j=2^{n_j}-1$ for any $P_j \in {\mathcal{M}}$.
\end{proposition}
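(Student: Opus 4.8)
The plan is to combine the characteristic-$2$ formula for $\sigma$ with Theorem \ref{result0}, and to dispose of the single residual case by a global multiplicity count. Since $A_1$ is perfect, $\sigma(A_1)=A_1$, so by multiplicativity every irreducible factor of each local factor $\sigma(P^{h})$, for $P\in\{x,x+1\}\cup\mathcal M$, must again lie in $\{x,x+1\}\cup\mathcal M$. Write $h+1=2^{e}m$ with $m$ odd. By the Frobenius identity $(U+1)^{2^{e}}=U^{2^{e}}+1$ we have $M^{h+1}+1=(M^{m}+1)^{2^{e}}$, and $M^{m}+1=(M+1)\sigma(M^{m-1})$, whence
\[
\sigma(M^{h})=\frac{M^{h+1}+1}{M+1}=(M+1)^{2^{e}-1}\,\sigma\!\left(M^{m-1}\right)^{2^{e}} .
\]
Here $M+1=x^{a'}(x+1)^{b'}$ is even, while $m-1=2h'$ is even and $\sigma(M^{2h'})$ is \emph{odd}: since $M\equiv1\pmod{x(x+1)}$ one gets $\sigma(M^{2h'})\equiv 2h'+1\equiv1$, so neither $x$ nor $x+1$ divides it. Thus all odd irreducible factors of $\sigma(M^{h})$ are precisely those of $\sigma(M^{2h'})$.

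Next I would reduce via Theorem \ref{result0}. Suppose some $h_j$ is not of the form $2^{n}-1$, i.e. $m\ge3$, so $h'\ge1$. If $P_j\in\{M_1,M_3,\overline{M_3}\}$ (any $h'\ge1$), or $P_j\in\{M_2,\overline{M_2}\}$ with $h'\ge2$, then Theorem \ref{result0}(i) yields a non-Mersenne prime $Q\mid\sigma(P_j^{2h'})$; being odd, $Q\notin\{x,x+1\}\cup\mathcal M$, yet $Q\mid\sigma(A_1)=A_1$, a contradiction. Hence $h_j=2^{n_j}-1$ whenever $P_j\in\{M_1,M_3,\overline{M_3}\}$, and for $P_j\in\{M_2,\overline{M_2}\}$ only $m\in\{1,3\}$ survive, the case $m=3$ meaning $h_j=3\cdot2^{e}-1$.

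It then remains to exclude $m=3$ for $M_2,\overline{M_2}$, where $\sigma(M_2^{2})=M_1\overline{M_3}$ and $\sigma(\overline{M_2}^{2})=M_1M_3$ are products of Mersenne primes, so Theorem \ref{result0} is silent. Here I would argue by matching multiplicities in $\sigma(A_1)=A_1$. Writing $a+1=2^{s}t$ with $t$ odd, the odd factors of $\sigma(x^{a})=(x^{a+1}+1)/(x+1)$ are those of $x^{t}+1$, and any divisor of $t$ other than $1,3,5,7$ introduces an irreducible of order $9,15,21,\dots$ outside $\mathcal M$ (for instance $x^{4}+x+1\mid\Phi_{15}$); hence $t\in\{1,3,5,7\}$ and $\sigma(x^{a})$ contributes a power of exactly one of $M_1$ ($t=3$), $M_3$ ($t=5$) or $M_2\overline{M_2}$ ($t=7$), and symmetrically for $b+1=2^{s'}t'$ with $M_1,\overline{M_3},M_2\overline{M_2}$. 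Because no $\sigma(P^{h_P})$ with $P\in\mathcal M$ produces $M_2$ or $\overline{M_2}$, equating the $M_2$- and $\overline{M_2}$-multiplicities forces $h_{M_2}=h_{\overline{M_2}}=[t=7]\,2^{s}+[t'=7]\,2^{s'}$ (with $[\,\cdot\,]$ the indicator). Feeding this, together with the analogous equations for $M_1,M_3,\overline{M_3}$ and for $x,x+1$, into $\sigma(A_1)=A_1$ produces a small arithmetic system, whose finitely many cases I would check to see that an exponent $3\cdot2^{e}-1$ can never be realized — or, equivalently, that any such configuration lets $A_1$ split off a proper coprime perfect factor, contradicting the assumed indecomposability.

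The genuinely hard point is exactly this last step. Theorem \ref{result0} kills every case \emph{locally}, a forbidden odd prime appearing, except $M_2,\overline{M_2}$ with $h+1$ of odd part $3$; there no forbidden prime shows up, and the contradiction must be extracted from the \emph{global} balance of all five Mersenne multiplicities. I expect the bookkeeping — correctly tracking how $\sigma(x^{a})$, $\sigma((x+1)^{b})$ and the $m=3$ factors feed $M_1,M_3,\overline{M_3}$ — to be the main labor, with indecomposability the likely lever for closing out the surviving cases.
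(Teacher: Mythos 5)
Your first two steps coincide with the paper's part i): the same Frobenius splitting $\sigma(P^{h})=(1+P)^{2^{e}-1}\,\sigma(P^{m-1})^{2^{e}}$ for $h+1=2^{e}m$ with $m$ odd, followed by Theorem \ref{result0}(i), which eliminates $P_j\in\{M_1,M_3,\overline{M_3}\}$ for every $m\ge 3$ and $P_j\in\{M_2,\overline{M_2}\}$ for $m\ge 5$, leaving exactly the odd-part-$3$ case $h_j=3\cdot 2^{e}-1$ for $M_2,\overline{M_2}$ (which subsumes $h_j=2$). Up to that point your argument is complete and matches the paper.

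The genuine gap is precisely the residual case, which you explicitly leave as a plan (``a small arithmetic system, whose finitely many cases I would check'') rather than an argument, and for which you guess the wrong closing lever: indecomposability plays no role here (it was already spent, in Lemma \ref{gcdMjsigmA1}, to obtain $A=A_1$). The paper's part ii) closes the case by a short \emph{bootstrap on part i)}: if the $M_2$-exponent is not of the form $2^{n}-1$, then $a,b\in\{7\cdot 2^{n}-1: n\ge 0\}$, i.e.\ the odd parts of $a+1$ and $b+1$ are forced to be $7$; consequently $\sigma(x^{a})$ and $\sigma((x+1)^{b})$ contribute only $M_2\overline{M_2}$, and the entire $M_1$-content of $\sigma(A)=A$ comes from $\sigma\bigl(M_2^{3\cdot 2^{e}-1}\bigr)$ and $\sigma\bigl(\overline{M_2}^{\,3\cdot 2^{e}-1}\bigr)$, each contributing $M_1^{2^{e}}$ (your own correct observation that $h_{M_2}=h_{\overline{M_2}}$ is what makes the two exponents $e$ agree). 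Hence $M_1$ occurs in $A$ with exponent exactly $2^{\nu}$ for some $\nu\in\N^{*}$, an even power of two, contradicting the already-established part of the proposition applied to $M_1$ itself, whose exponent must be $2^{n}-1$. So the missing idea is self-referential reuse of part i) on the multiplicities that the surviving $M_2,\overline{M_2}$ factors generate; no exhaustive case enumeration and no appeal to indecomposability is needed. (Your bookkeeping framework, with $t,t'\in\{1,3,5,7\}$ and $h_{M_2}=[t=7]2^{s}+[t'=7]2^{s'}$, is sound and would eventually collapse to the same contradiction, but as written the decisive step is announced, not proved.)
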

\begin{proof}
i) Suppose that $P_j \not\in  \{M_2, \overline{M_2}\}$. If $h_j$ is even, then $\sigma(P_j^{h_j})$ is divisible by a non Mersenne prime $Q$. So, we get the contradiction: $Q \mid A$. If $hj=2^{n_j}u_j-1$ with $u_j \geq 3$ odd, then
$\sigma(P_j^{h_j}) = (1+P_j)^{2^{n_j}-1}\cdot (1+P_j+\cdots + P_j^{u_j-1})^{2^{n_j}}$ is also divisible by a non Mersenne prime, which is impossible.\\
ii) If $P_j \in \{M_2,\overline{M_2} \}$ and ($h_j$ is even or it is of the form $2^{n_j}u_j-1$, with $u_j \geq 3$ odd), then $a,b \in \{7 \cdot 2^n-1: n \geq 0\}$. Thus, for some $\nu \in \N^*$, $M_1^{2^{\nu}}$ divides $\sigma(A)=A$. It is impossible by the part i) of our proof.
\end{proof}
Lemma \ref{gcdMjsigmA1}, Proposition \ref{caseperfect} and Lemma \ref{oldresult1} imply
\begin{corollary} \label{A1perfect}
One has: $A=A_1 \in {\mathcal{P}}$.
\end{corollary}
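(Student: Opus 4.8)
The plan is to assemble the corollary directly from the three results quoted just above its statement, treating it as the bookkeeping step that closes out the perfect case. The overall shape is: reduce $A$ to $A_1$, read off the exponent structure of the Mersenne factors, and then invoke the classification of Lemma \ref{oldresult1}.

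First I would apply part ii) of Lemma \ref{gcdMjsigmA1} to obtain $A = A_1$. This is the crucial reduction: it says that no Mersenne prime outside $\mathcal{M}$ can divide $A$, so $A_2 = 1$ and every prime factor $P_i$ of $A$ already lies in $\mathcal{M}$ (equivalently, for each $P_j \notin \mathcal{M}$ we have $h_j = 0 = 2^0 - 1$). Since $A$ is assumed indecomposable perfect and $A = A_1$, the polynomial $A_1$ is itself perfect, which is precisely the hypothesis needed to feed the next step.

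Next I would invoke Proposition \ref{caseperfect}: because $A_1$ is perfect, every Mersenne prime $P_j \in \mathcal{M}$ dividing $A$ satisfies $h_j = 2^{n_j} - 1$ for some $n_j \in \N$. Combined with the previous paragraph, where the factors outside $\mathcal{M}$ carry exponent $0 = 2^0 - 1$, this shows that $h_i = 2^{n_i} - 1$ holds for \emph{every} $i \in I$. That is exactly the hypothesis of Lemma \ref{oldresult1} (Theorem 1.1 of \cite{Gall-Rahav12}), whose conclusion $A \in \mathcal{P}$ finishes the argument.

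The corollary itself carries no real obstacle: it is a short chaining of implications, and the only point demanding care is the logical ordering, since one must first secure $A = A_1$ so that the exponent condition of Lemma \ref{oldresult1} can be asserted over all of $I$ and not merely over the $\mathcal{M}$-factors. The genuine difficulty sits upstream, in Proposition \ref{caseperfect}, whose case analysis, and in particular the treatment of $P_j \in \{M_2, \overline{M_2}\}$ forcing $a,b$ to have the form $7\cdot 2^n - 1$, ultimately rests on the non-Mersenne divisibility furnished by Theorem \ref{result0}.
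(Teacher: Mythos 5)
Your proposal is correct and takes essentially the same route as the paper, which proves Corollary \ref{A1perfect} precisely by chaining Lemma \ref{gcdMjsigmA1} (giving $A=A_1$), Proposition \ref{caseperfect} (giving $h_j=2^{n_j}-1$ on the $\mathcal{M}$-factors), and Lemma \ref{oldresult1} (giving $A \in \mathcal{P}$). Your explicit remark that the factors outside $\mathcal{M}$ carry exponent $0 = 2^0-1$, so that the hypothesis of Lemma \ref{oldresult1} holds over all of $I$, merely spells out a step the paper leaves implicit.
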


\subsubsection{Case of unitary perfect polynomials}

Similar proofs give Proposition \ref{caseunitperfect} and thus, our result.

\begin{lemma} [Theorem 1.3 in \cite{Gall-Rahav12}] \label{oldresult2}
 If $h_i = 2^{n_i}$ for any $i \in I$, then $A$ (or $\overline{A}$) is of the form $B^{2^n}$ where $B \in {\mathcal{P}}_u$.
\end{lemma}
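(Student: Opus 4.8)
The plan is to mimic, in the unitary setting, the strategy behind Lemma~\ref{oldresult1} and Proposition~\ref{caseperfect}, exploiting two special features of characteristic $2$. First, for any prime power $Q^{2^n}$ one has $\sigma^*(Q^{2^n}) = 1 + Q^{2^n} = (1+Q)^{2^n}$, so applying $\sigma^*$ to $A = \prod_Q Q^{e_Q}$ gives $\sigma^*(A^2) = \sigma^*(A)^2$; hence $A$ is unitary perfect if and only if $A^2$ is. I would therefore first reduce to the \emph{primitive} case: writing $A = B^{2^n}$ with $B$ not a square, $B$ is again an indecomposable unitary perfect polynomial of the same shape whose Mersenne exponents are powers of $2$, and it suffices to prove that $B$ (or $\overline{B}$) lies in $\mathcal{P}_u$. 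Recombining and using $\overline{U_j^{2^n}} = \overline{U_j}^{2^n}$ then yields the stated conclusion for $A$.

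The second feature is that a Mersenne prime $P = 1 + x^{c}(x+1)^{e}$ satisfies $1 + P = x^{c}(x+1)^{e}$, so $\sigma^*(P^{2^n}) = (x^{c}(x+1)^{e})^{2^n}$ involves only $x$ and $x+1$. Consequently, in
$$(1+x^a)\,(1+(x+1)^b)\prod_i (1+P_i^{h_i}) = \sigma^*(A) = A = x^a(x+1)^b\prod_i P_i^{h_i},$$
every Mersenne factor of the left-hand side must originate from $1+x^a$ or $1+(x+1)^b$, and no non-Mersenne prime may occur. Thus $1+x^a$ (resp. $1+(x+1)^b$) is divisible only by $x+1$ (resp. $x$) and by Mersenne primes. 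Writing $a = 2^s m$ with $m$ odd gives $1+x^a = (1+x^m)^{2^s}$ and $1+x^m = (x+1)\,\sigma(x^{m-1})$, so this condition amounts to $\sigma(x^{m-1})$ being divisible only by Mersenne primes; Lemma~\ref{canadayperf} then forces $m-1 \in \{2,4,6\}$, i.e. $m \in \{3,5,7\}$ (together with the trivial $m=1$). The same holds for $b$. This pins the odd parts of $a,b$ to $\{1,3,5,7\}$ and confines all the $P_i$ to $\mathcal{M}$, the relevant factorizations being $1+x^3=(x+1)M_1$, $1+x^5=(x+1)M_3$, $1+x^7=(x+1)M_2\overline{M_2}$ and their conjugates.

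With the primes confined to $\mathcal{M}$, I would next compare exponents. Since $x+1 \nmid 1+(x+1)^b$ and $x \nmid 1+x^a$, balancing the powers of $x$ and of $x+1$ gives
$$a = 2^{s'} + \sum_i a_i 2^{n_i}, \qquad b = 2^{s} + \sum_i b_i 2^{n_i},$$
where $P_i = 1 + x^{a_i}(x+1)^{b_i}$, $a = 2^s m$ and $b = 2^{s'}m'$. Moreover the exponent of each $P_i$ in $A$ is the sum of its multiplicities in $1+x^a$ and in $1+(x+1)^b$, i.e. an element of $\{2^s,\,2^{s'},\,2^s+2^{s'}\}$; demanding that it be the single power $2^{n_i}$ either kills one term or forces $s = s'$. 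In the primitive case ($s=0$, or $s'=0$, or some $n_i=0$) these relations bound $s,s'$ and hence $a,b$, leaving a short finite list. Solving the resulting small system (and noting that no genuine decomposition can occur, since both factors would share $x$ and $x+1$) recovers exactly $U_1,\dots,U_9$ up to the substitution $x \mapsto x+1$, which is the desired conclusion for $B$.

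The conceptual steps are clean, and the main obstacle is twofold. The deep input is the restriction $m \in \{1,3,5,7\}$: this is precisely the statement that $1+x^m$ has only Mersenne prime divisors for no larger odd $m$, resting on Canaday's Theorem~8 (packaged as Lemma~\ref{canadayperf}) and ultimately on controlling the irreducible—and in particular the Mersenne—factors of the cyclotomic polynomials $\Phi_d$ over $\F_2$. Granting that, the remaining difficulty is bookkeeping: one must run the finite search over $m,m' \in \{1,3,5,7\}$ and small $s,s'$ with real care, because the requirement that every $P_i$-exponent be a single power of $2$ couples the two sides through primes (such as $M_1$, or $M_2$ and $\overline{M_2}$) that may divide both $1+x^a$ and $1+(x+1)^b$; verifying that no spurious solution survives and that all nine classes are produced is where the argument must be most vigilant.
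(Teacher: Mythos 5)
You should first be aware that the paper contains no proof of this lemma at all: it is imported verbatim as Theorem 1.3 of \cite{Gall-Rahav12} and used as a black box (the paper's new contribution is to \emph{remove} the hypothesis $h_i=2^{n_i}$, via Proposition \ref{caseunitperfect}). So your proposal must be judged as a self-contained reproof, and as such it is essentially sound and rests on the right inputs. The identities $\sigma^*(P^{2^n})=(1+P)^{2^n}$ and $1+P=x^c(x+1)^e$ for a Mersenne prime correctly reduce all Mersenne factors of $\sigma^*(A)=A$ to $1+x^a$ and $1+(x+1)^b$; writing $a=2^sm$ with $m$ odd, Lemma \ref{canadayperf}-i applied to $\sigma(x^{m-1})=(1+x^m)/(1+x)$ pins $m,m'\in\{1,3,5,7\}$; and since $1+x^m$ is squarefree for odd $m$ (its derivative is $x^{m-1}$ --- a point you should state, because you need each Mersenne multiplicity to be \emph{exactly} $2^s$ resp.\ $2^{s'}$), the exponent of each $P_i$ is $2^s$, $2^{s'}$ or $2^s+2^{s'}$, the last forcing $s=s'$. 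I checked your endgame: for each pair $(m,m')$ the $x$- and $(x+1)$-exponent equations fix $s'-s$ (e.g.\ $m=3,m'=1$ gives $s'=s+1$ and $A=U_2^{2^s}$; $m=5,m'=3$ gives $s'=s+1$ and $A=U_5^{2^s}$; $m=m'=7$ forces $s=s'$ and $A=U_7^{2^s}$), and the enumeration returns exactly $U_1,\dots,U_9$ up to conjugation and powers of $2$ --- so your reduction to the ``primitive'' case, while harmless, is not even needed: the relations produce $A=B^{2^n}$ or $\overline{A}=B^{2^n}$ directly, and the ``or $\overline{A}$'' is genuinely necessary since $\overline{U_2},\overline{U_3},\overline{U_5},\overline{U_8},\overline{U_9}\notin\mathcal{P}_u$.

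Two loose ends remain. First, the case $I=\emptyset$, where the hypothesis is vacuous: your equations then give $a=2^{s'}$, $b=2^s$, $s=s'$, i.e.\ the trivial family $A=(x^2+x)^{2^n}$, which is indecomposable unitary perfect but \emph{not} of the form $B^{2^n}$ with $B\in\mathcal{P}_u$; your claim that the search ``recovers exactly $U_1,\dots,U_9$'' silently drops it. This is really a defect of the statement as transcribed in the present paper (the same convention issue afflicts Lemma \ref{oldresult1}, where $(x^2+x)^{2^n-1}$ is likewise excluded), but a complete proof must dispose of it explicitly, e.g.\ by assuming at least one Mersenne factor. Second, you defer the finite verification as ``bookkeeping''; since that case check is the entire content of the final step and, as noted above, it does close without surprises, it should be carried out rather than gestured at.
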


\begin{proposition} \label{caseunitperfect}
i) If $A_1$ is unitary perfect then $h_j=2^{n_j}$ for any $P_j \in {\mathcal{M}}$.\\
ii) $A=A_1$.
\end{proposition}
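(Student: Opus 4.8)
The plan is to mirror the perfect-case arguments (Lemmas \ref{canadayperf}--\ref{gcdMjsigmA1} and Proposition \ref{caseperfect}), replacing $\sigma$ by $\sigma^*$ throughout. Two facts make this transfer possible: the formula $\sigma^*(P^h) = 1 + P^h$ for a prime power together with the multiplicativity $A = \sigma^*(A) = \sigma^*(A_1)\sigma^*(A_2)$ coming from $\gcd(A_1,A_2)=1$, and the factorization over $\F_2$
$$1 + P^{2^{n}u} = \bigl((1+P)\,\sigma(P^{u-1})\bigr)^{2^{n}}\qquad (u \text{ odd}).$$
Since $1+P = x^{a}(x+1)^{b}$ when $P = 1+x^{a}(x+1)^{b}$ is Mersenne, the non-linear (odd) part of $1 + P^{2^{n}u}$ is exactly $\sigma(P^{u-1})^{2^{n}}$, with $u-1$ even; this is precisely the quantity controlled by Theorem \ref{result0}, so the whole analysis reduces to it. Because every $P_i$ is assumed Mersenne, every odd divisor of $A=\sigma^*(A)$ is Mersenne, hence so is every odd divisor of each factor $\sigma^*(x^{a})$, $\sigma^*((x+1)^{b})$ and $\sigma^*(P_i^{h_i})$.

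For part i), assume $A_1$ is unitary perfect and write $h_j = 2^{n_j}u_j$ with $u_j$ odd for each $P_j \in {\mathcal{M}}$. Suppose $u_j \geq 3$. The odd part of $\sigma^*(P_j^{h_j}) = 1 + P_j^{h_j}$ is $\sigma(P_j^{2k_j})^{2^{n_j}}$ with $u_j - 1 = 2k_j$, $k_j \geq 1$. If $P_j \in \{M_1,M_3,\overline{M_3}\}$, Theorem \ref{result0} i) (valid for every $k_j \geq 1$) produces a non-Mersenne prime dividing $\sigma(P_j^{2k_j})$, hence dividing $A_1$, contradicting that all odd divisors of $A_1$ are Mersenne. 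If $P_j \in \{M_2,\overline{M_2}\}$ the same argument works once $k_j \geq 2$, i.e. $u_j \geq 5$. The only residual case is $P_j \in \{M_2,\overline{M_2}\}$ with $u_j = 3$, where $\sigma(M_2^{2}) = M_1\overline{M_3}$ is a product of Mersenne primes and yields no immediate contradiction; this case is excluded exactly as in Proposition \ref{caseperfect} ii), by feeding the forced factor $\bigl(M_1\overline{M_3}\bigr)^{2^{n_j}}$ of $\sigma^*(A_1)=A_1$ back into the exponent bookkeeping of the identity $A_1=\sigma^*(A_1)$ and contradicting the power-of-two exponents of $M_1$ and $\overline{M_3}$ already established in the previous step. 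Thus $u_j = 1$ and $h_j = 2^{n_j}$ for every $P_j \in {\mathcal{M}}$.

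For part ii) I first prove the unitary analogue of Lemma \ref{divisorsigmA1}: every Mersenne prime divisor of $\sigma^*(A_1)$ lies in ${\mathcal{M}}$. Writing $a = 2^{s}t$ and $b = 2^{r}t'$ with $t,t'$ odd, the odd part of $1+x^{a}$ is $\sigma(x^{t-1})^{2^{s}}$ and that of $1+(x+1)^{b}$ is $\overline{\sigma(x^{t'-1})}^{\,2^{r}}$; being Mersenne, Lemma \ref{canadayperf} i) forces $t-1,t'-1\in\{2,4,6\}$ and places these divisors in ${\mathcal{M}}$. For each $P_i \in {\mathcal{M}}$ the odd part of $1+P_i^{h_i}$ is $\sigma(P_i^{u_i-1})^{2^{n_i}}$, whose Mersenne divisors lie in ${\mathcal{M}}$ by Lemma \ref{canadayperf} ii) (the only surviving subcase being $\sigma(M_2^{2})=M_1\overline{M_3}$), all other subcases being impossible by Theorem \ref{result0}. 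Consequently, for any $P_j \notin {\mathcal{M}}$ we get $\gcd(P_j^{h_j},\sigma^*(A_1)) = 1$, whence $P_j^{h_j} \mid \sigma^*(A_2)$; multiplying over all such $j$ gives $A_2 \mid \sigma^*(A_2)$. Since $\deg \sigma^*(A_2) = \deg A_2$, this forces $A_2 = \sigma^*(A_2)$, i.e. $A_2$ is unitary perfect, and the indecomposability of $A$ yields $A_2 = 1$, that is $A = A_1$. Combining parts i) and ii) with Lemma \ref{oldresult2} then gives the desired form $A=B^{2^{n}}$.

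I expect the main obstacle to be the exceptional subcase $P_j \in \{M_2,\overline{M_2}\}$ with $u_j = 3$ in part i): here Theorem \ref{result0} does not apply and $\sigma(M_2^{2})$ is a product of Mersenne primes, so the contradiction must come from a delicate count of the exponents of $M_1$ and $\overline{M_3}$ across the factorization $A_1 = \sigma^*(A_1)$, exactly the point that makes Proposition \ref{caseperfect} ii) the technical heart of the perfect case.
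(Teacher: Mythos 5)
Your overall plan is faithful to what the paper intends by ``similar proofs'': transport Lemmas \ref{canadayperf}--\ref{gcdMjsigmA1} and Proposition \ref{caseperfect} to $\sigma^*$ via $\sigma^*(P^h)=1+P^h$ and $1+P^{2^n u}=\bigl((1+P)\sigma(P^{u-1})\bigr)^{2^n}$, and reduce everything to Theorem \ref{result0}. Your part ii) is sound (and your computation $\sigma(M_2^2)=M_1\overline{M_3}$ is the correct one --- Section \ref{caseM2} of the paper misprints it as $M_1M_3$; note also that $A_2=1$ comes even more cheaply than by indecomposability, since $A_2=\sigma^*(A_2)$ is impossible for odd $A_2\neq 1$, as $x(x+1)$ divides $1+P_j^{h_j}$). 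The genuine gap is in the one case you yourself flag, $P_j\in\{M_2,\overline{M_2}\}$, $u_j=3$: the contradiction you propose --- that the forced factor $(M_1\overline{M_3})^{2^{n_j}}$ violates the power-of-two exponents of $M_1$ and $\overline{M_3}$ --- does not occur. Carry out the bookkeeping: if $h_{M_2}=3\cdot 2^n$, the only sources of $M_2$ in $\sigma^*(A_1)$ are $\sigma^*(x^a)$ and $\sigma^*((x+1)^b)$ with $a=7\cdot2^s$, $b=7\cdot2^r$, and matching exponents forces $2^s+2^r=3\cdot2^n$, i.e.\ $\{s,r\}=\{n,n+1\}$, together with $h_{\overline{M_2}}=3\cdot2^n$. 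Then $\overline{M_2}$ contributes $\sigma(\overline{M_2}^{\,2})^{2^n}=(M_1M_3)^{2^n}$, so the exponent of $M_1$ on the right-hand side is $2^n+2^n=2^{n+1}$, and those of $M_3,\overline{M_3}$ are $2^n$ --- all powers of two, fully consistent with the earlier cases of part i). The perfect-case mechanism you are importing is a parity trick (admissible exponents $2^{n_1}-1$ are odd, the forced exponent $2^\nu$ is even); in the unitary case the admissible exponents are themselves powers of two, so that transplant evaporates.

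The case is nevertheless impossible, but the contradiction must come from the \emph{even} part. With all odd exponents now pinned down ($h_{M_1}=2^{n+1}$, $h_{M_3}=h_{\overline{M_3}}=2^n$, $h_{M_2}=h_{\overline{M_2}}=3\cdot2^n$), compare the exponent of $x$ on both sides of $A_1=\sigma^*(A_1)$: the left side gives $a=7\cdot2^s$, while the right side gives $2^r$ from $1+(x+1)^{7\cdot2^r}=\bigl(xM_2\overline{M_2}\bigr)^{2^r}$ plus $2^{n+1}+2^n+2^{n+1}+2^n+3\cdot2^n=9\cdot2^n$ from $\sigma^*$ of $M_1^{2^{n+1}}$, $M_2^{3\cdot2^n}$, $\overline{M_2}^{3\cdot2^n}$, $M_3^{2^n}$, $\overline{M_3}^{2^n}$. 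For $(s,r)=(n,n+1)$ this reads $7\cdot2^n=11\cdot2^n$, and for $(s,r)=(n+1,n)$ it reads $14\cdot2^n=10\cdot2^n$ --- absurd either way. Until this (or an equivalent) count replaces your appeal to ``the exponents of $M_1$ and $\overline{M_3}$,'' part i) of your proof is incomplete at exactly its critical point.
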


\begin{remark}
\emph{Contrary to our proofs  in the present paper, the proofs of \cite[Corollaries 5.3 and 5.4 ]{Gall-Rahav12} are not complete,  since  the special case where
$\gcd(M_2 \overline{M_2}, A) \neq 1$  was not considered}.

\end{remark}
\subsection{Proof of Theorem \ref{caseodd}} \label{oddperfect}
We also use in this proof  Lemma \ref{oldresult3} and Theorem~\ref{result0}. \\
If  $S=P_1^2 \cdots P_m^2$ is perfect, where each $P_j$ is Mersenne, then
$\sigma(P_1^2) \cdots \sigma(P_m^2) = \sigma(S)=S=P_1^2 \cdots P_m^2$. We must have: $\sigma(P_1^2) = \prod_k Q_k$.Thus, $P_1 \in \{M_2, \overline{M_2} \}$ and any $Q_k \in \{M_1, M_3, \overline{M_3} \}$. But, for any $T \in \{M_1, M_3, \overline{M_3} \}$, $\sigma(T^2)$ and thus $S$ is divisible by a non Mersenne prime, which is impossible.\\
We get in the same manner the part ii) of the theorem.

\section{Proof of Theorem \ref{result0}} \label{proof0}
We mainly prove Theorem \ref{result0} by contradiction (to Corollary \ref{sommaieven}). Lemma \ref{omegasigmM2h} states that $\sigma(M^{2h})$ is square-free for any $h \in \N^*$. We suppose that:
\begin{equation}\label{assume}
\text{$\sigma(M^{2h})= \displaystyle{\prod_{j\in J} {P_j}}$, $P_j = 1+x^{a_j}(x+1)^{b_j}$ irreducible, $P_i \not= P_j$ if $i \not= j$.}
\end{equation}
We set $U_{2h}:=\sigma(\sigma(M^{2h}))$ and $M:=x^a(x+1)^b+1$, with $M$ irreducible (so that $\gcd(a,b)=1$, $a$ or $b$ is odd). We may assume that $a$ is odd, without loss of generality.

\subsection{Useful facts}\label{preliminaire}
Some of the following results are obvious or cited in \cite{Gall-Rahav-mersenn}, so we omit their proofs.
By Lemma \ref{p-reduction}, $\sigma(M^{2h})$ is divisible by a non Mersenne prime whenever $\sigma(M^{p-1})$  is too, 
for some prime divisor $p$ of $2h+1$.

\begin{lemma} \label{phiandirreduc}
For $m\in \N^*$, denote, as usual, by $N_2(m)$ the number of irreducible polynomials of degree $m$, over $\F_2$. Then\\
i) $N_2(m) \geq [2^m - 2(2^{m/2} - 1)]/m$.\\
ii) $\varphi(m) < N_2(m)$ if $m\geq 4$, where $\varphi$ is the Euler totient function.\\
iii) There exists at least one irreducible polynomial of degree $m$ which is not a Mersenne prime, if $m\geq 4$.
\end{lemma}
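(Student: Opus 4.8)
The plan is to establish the three parts in order, each feeding into the next, using only the Gauss--M\"obius count of irreducibles together with elementary estimates.

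For (i) I would start from the classical formula $m\,N_2(m) = \sum_{d\mid m}\mu(d)\,2^{m/d}$. Isolating the dominant term $d=1$ gives $m\,N_2(m) = 2^m + \sum_{d\mid m,\,d>1}\mu(d)\,2^{m/d}$. Since $|\mu(d)|\le 1$ and every divisor $d>1$ of $m$ satisfies $m/d \le m/2$, I would bound the remaining sum crudely by
$$\sum_{d\mid m,\,d>1} 2^{m/d} \le \sum_{i=1}^{\lfloor m/2\rfloor} 2^{i} = 2^{\lfloor m/2\rfloor +1}-2 \le 2\bigl(2^{m/2}-1\bigr).$$
Dividing through by $m$ yields exactly the stated lower bound $N_2(m) \ge [2^m - 2(2^{m/2}-1)]/m$.

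For (ii) I would combine (i) with the elementary fact that $\varphi(m) < m$ for $m\ge 2$. It then suffices to prove $2^m - 2(2^{m/2}-1) > m\,\varphi(m)$, and since $m\,\varphi(m) < m^2$ this follows as soon as $2^m - 2(2^{m/2}-1) \ge m^2$. This last inequality I expect to hold for all $m\ge 6$ by a straightforward comparison of exponential and quadratic growth (a one-line induction, after checking $m=6$). The crude estimate of (i) is not strong enough at the very bottom, so the remaining values $m=4,5$ must be handled by direct computation: $N_2(4)=3>2=\varphi(4)$ and $N_2(5)=6>4=\varphi(5)$. This small case split is the only real subtlety, and hence the main (if minor) obstacle.

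For (iii) the key observation is that any Mersenne prime of degree $m$ has the form $1+x^{a}(x+1)^{b}$ with $a+b=m$ and $a,b\ge 1$, where irreducibility forces $\gcd(a,b)=1$. Using $\gcd(a,b)=\gcd(a,m)$ and the fact that distinct pairs $(a,b)$ give distinct polynomials, the number of Mersenne primes of degree $m$ is at most the number of $a\in\{1,\dots,m-1\}$ coprime to $m$, that is, at most $\varphi(m)$. By part (ii) this is strictly smaller than $N_2(m)$ whenever $m\ge 4$, so there must exist an irreducible polynomial of degree $m$ that is not a Mersenne prime, which is the assertion.
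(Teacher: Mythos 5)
Your proposal is correct, and its overall architecture is the same as the paper's: a lower bound for $N_2(m)$, the comparison $\varphi(m)<N_2(m)$, and the observation that Mersenne primes of degree $m$ are among the polynomials $1+x^c(x+1)^{m-c}$ with $\gcd(c,m)=1$, hence number at most $\varphi(m)$. Two differences deserve comment. For (i) the paper simply cites Exercise 3.27 of Lidl--Niederreiter, whereas you derive the bound from the M\"obius formula $mN_2(m)=\sum_{d\mid m}\mu(d)2^{m/d}$ by bounding the tail by $\sum_{i=1}^{\lfloor m/2\rfloor}2^i\le 2(2^{m/2}-1)$; this is exactly the standard proof of that exercise, so the difference is only self-containedness. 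More substantively, your insistence on treating $m=4,5$ by direct evaluation of $N_2$ and $\varphi$ is not a stylistic choice but a necessary repair: the paper claims that ``direct computations'' give $m<(2^m-2(2^{m/2}-1))/m$ for $m\in\{4,5\}$, but this is numerically false --- the right-hand side equals $2.5$ at $m=4$ and is about $4.54$ at $m=5$ --- and the paper's chain $\varphi(m)\le m<N_2(m)$ actually breaks at $m=4$, where $N_2(4)=3<4$. The lemma itself survives precisely because of the values you compute, $\varphi(4)=2<3=N_2(4)$ and $\varphi(5)=4<6=N_2(5)$, after which your growth comparison $2^m-2(2^{m/2}-1)\ge m^2$ for $m\ge 6$ (the paper's function $f(x)=2^x-2(2^{x/2}-1)-x^2$) and the coprimality count in (iii) proceed exactly as in the paper. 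So your proof is sound and in fact slightly more careful than the published argument at the small cases.
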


\begin{proof}
i): See Exercise 3.27, p. 142 in \cite{Rudolf}.\\
ii): We get by direct computations, $m < (2^m - 2(2^{m/2} - 1))/m$ for $4 \leq m \leq 5$ and by studying the function $f(x)=2^x - 2(2^{x/2} - 1) -x^2$, for $x\geq 6$. So,
$\varphi(m) \leq m< N_2(m)$.\\
iii): First, $1+x^c(x+1)^d \text{ Mersenne prime}$ implies that $\gcd(c,c+d)=\gcd(c,d)=1$. Moreover,
the set ${\mathcal{M}}_m$ of Mersenne primes of degree $m$ is a subset of $\Sigma_m:=\{ x^c(x+1)^{m-c} +1: 1 \leq c \leq m, \ \gcd(c,m)=1\},$
Thus,
$$\# {\mathcal{M}}_m \leq \# \{c: 1\leq c \leq m,\ \gcd(c,m)=1\} = \varphi(m).$$
Therefore, there exist at most $\varphi(m)$ Mersenne primes of degree $m$. So, we get iii).
\end{proof}

\begin{lemma} \label{omegasigmM2h}
i) $\sigma(M^{2h})$ is square-free and reducible.\\
ii) $a \geq 2$ or $b \geq 2$ so that $M \not=M_1$.
\end{lemma}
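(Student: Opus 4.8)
The plan is to base everything on the identity $(1+M)\,\sigma(M^{2h}) = 1 + M^{2h+1}$ in $\F_2[x]$ and on the congruences coming from $M(0)=M(1)=1$ (valid since $a,b\ge 1$). Writing $S:=\sigma(M^{2h})$, these give $S\equiv 2h+1\equiv 1$ modulo each of $x$ and $x+1$, so $x,x+1\nmid S$, and $S\equiv 1\pmod M$, so $M\nmid S$. I also record that $M'=x^{a-1}(x+1)^{b-1}\bigl(a(x+1)+bx\bigr)$, and since $a$ is odd the bracket equals $x+1$ or $1$; hence the only irreducible factors $M'$ can have are $x$ and $x+1$. For square-freeness I differentiate the identity: using $2h+1\equiv 1$ one gets $M'S+(1+M)S'=M^{2h}M'$. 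If some irreducible $Q$ had $Q^2\mid S$, then $Q\mid S$ and $Q\mid S'$, so $Q$ divides the left-hand side and therefore $Q\mid M^{2h}M'$. Thus $Q=M$ or $Q\in\{x,x+1\}$, and all three are excluded by the divisibility facts above, so $S$ is square-free. This is the one place where ``$a$ odd'' is essential, as it forces $M'$ to involve only $x$ and $x+1$.

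For reducibility I argue under the standing hypothesis (\ref{assume}) that $S$ is a product of Mersenne primes. Were $S$ irreducible it would be a single Mersenne prime $P=1+x^{c}(x+1)^{d}$, so $S+1=x^{c}(x+1)^{d}$; but $S+1=M+\cdots+M^{2h}=M\cdot\sigma(M^{2h-1})$ is divisible by the irreducible $M$, which has degree $a+b\ge 2$ and is $\ne x,x+1$ --- impossible. So $S$ has at least two prime factors and is reducible.

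For $M\ne M_1$ I linearise. Assume $M=M_1=x^2+x+1$ and put $y:=x^2+x=M_1+1$; the identity becomes $S=G(y)$ with $G(y)=\bigl((y+1)^{2h+1}+1\bigr)/y=\sum_{j=0}^{2h}\binom{2h+1}{j+1}y^{j}$, monic of degree $2h$ with coefficient of $y^{2h-1}$ equal to $\binom{2h+1}{2h}=2h+1\equiv 1$. Hence the sum of the roots of $G$ is $1$, i.e. $\sum_i \operatorname{Tr}_{\F_{2^{e_i}}/\F_2}(\gamma_i)=1$ over the irreducible factors $G_i$ of $G$ (with $e_i=\deg G_i$ and $\gamma_i$ a root). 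So some $G_1\mid G$ has a root of absolute trace $1$, whence $x^2+x+\gamma_1$ is irreducible over $\F_{2^{e_1}}$; by the composition criterion $G_1(x^2+x)$ is then irreducible over $\F_2$. It divides $S=G(x^2+x)$, it is invariant under $x\mapsto x+1$ (as $x^2+x$ is), and it is not $M_1$ (indeed $y+1\nmid G$ since $G(1)=1$, so $e_1\ge 2$ and $\deg G_1(x^2+x)\ge 4$). Since $M_1$ is the only $(x\mapsto x+1)$-invariant Mersenne prime, $G_1(x^2+x)$ is a non-Mersenne prime dividing $S$, contradicting (\ref{assume}); thus $a\ge 2$ or $b\ge 2$.

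I expect the last step to be the crux. The square-free and reducible parts are routine once the identity and the shape of $M'$ are in hand, but exhibiting the forbidden divisor in the degenerate case $M=M_1$ needs the change of variable $y=x^2+x$ together with the observation that the parity of the sub-leading coefficient of $G$ alone guarantees a trace-$1$ factor, and it leans on the $\F_2$ composition-irreducibility criterion for $g(x^2+x)$.
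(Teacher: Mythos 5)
Your proof is correct; note that the paper itself supplies no argument for this lemma --- it is among the facts declared ``obvious or cited in \cite{Gall-Rahav-mersenn}'' at the head of Section \ref{preliminaire} --- so your write-up is a self-contained replacement rather than a parallel of a printed proof. For square-freeness, your route (differentiate $(1+M)\sigma(M^{2h})=1+M^{2h+1}$ to get $M'S+(1+M)S'=M^{2h}M'$, observe that $M'=x^{a-1}(x+1)^{b-1}\bigl(a(x+1)+bx\bigr)$ has no irreducible factors besides $x$ and $x+1$, and exclude $x$, $x+1$, $M$ as divisors of $S$ by evaluating at $0,1$ and reducing mod $M$) is the standard one and presumably what the cited reference does; your reducibility argument via $S+1=M\,\sigma(M^{2h-1})$ is likewise routine. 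You also parsed the logical scope correctly, which matters here: square-freeness is unconditional, while ``reducible'' and ``$M\ne M_1$'' only hold under the standing reductio hypothesis (\ref{assume}) --- your own example $\sigma(M_1^2)=x^4+x+1$, irreducible and non-Mersenne, shows both fail unconditionally, and the paper indeed invokes the lemma only inside the contradiction framework (``Lemma \ref{omegasigmM2h} implies that $M\ne M_1$'' in Section \ref{M=M123}). The genuinely novel piece relative to anything visible in this paper is your part ii): substituting $y=x^2+x$, reading off that the subleading coefficient $\binom{2h+1}{2h}\equiv 1$ forces an irreducible factor $G_1$ of $G$ with a trace-one root, then using Artin--Schreier and the composition criterion to manufacture the self-conjugate irreducible divisor $G_1(x^2+x)$ of $\sigma(M_1^{2h})$ of degree $\ge 4$, which cannot be Mersenne since $P=\overline{P}$ and $\gcd(c,d)=1$ force $P=M_1$. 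Two small polish points: when you write the sum of the roots of $G$ as $\sum_i \operatorname{Tr}_{\F_{2^{e_i}}/\F_2}(\gamma_i)$ you implicitly assume $G$ square-free --- either note that a repeated factor of $G$ would yield one of $S=G(x^2+x)$, contradicting part i), or run the sum with multiplicities mod $2$, which works just as well; and your remark that ``$a$ odd is essential'' for square-freeness is overstated, since by $\gcd(a,b)=1$ at least one exponent is odd and the bracket $a(x+1)+bx$ reduces to $x$, $x+1$ or $1$ in every case, so the argument is genuinely symmetric.
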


\begin{notation}
\emph{For a nonconstant polynomial $S$ of degree $s$, we denote by $\alpha_l(S)$ the coefficient of $x^{s-l}$ in $S$, $0\leq l \leq s.$ One has: $\alpha_0(S) =1$.}
\end{notation}

We sometimes apply Lemmas \ref{lesalfal} and \ref{alfasigmM2h} without explicit mentions.

\begin{lemma} \label{lesalfal}
Let $S \in \F_2[x]$ of degree $s \geq 1$ and $l,t,r,r_1,\ldots, r_k \in \N$ such that $r_1>\cdots >r_k$, $t \leq k,
r_1-r_t \leq l \leq r \leq s.$  Then\\
i) $\alpha_l[(x^{r_1} + \cdots +x^{r_k})S] = \alpha_l(S) + \alpha_{l-(r_1-r_2)}(S)+\cdots + \alpha_{l-(r_1-r_t)}(S)$.\\
ii) $\alpha_l(\sigma(S))=\alpha_l(S)$ if no irreducible polynomial of degree at most $r$~divides~$S$.
\end{lemma}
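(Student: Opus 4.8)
The plan is to prove both parts by directly expanding the polynomials and reading off coefficients, since $\alpha_l(S)$ is nothing more than the coefficient of $x^{s-l}$.

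For part (i), I would set $T := (x^{r_1} + \cdots + x^{r_k})S = \sum_{j=1}^k x^{r_j}S$ and first note that $\deg T = s + r_1$, because the top monomial $x^{r_1}$ produces the leading term while the others are of strictly smaller degree. Thus $\alpha_l(T)$ is the coefficient of $x^{(s+r_1)-l}$ in $T$. Multiplication by $x^{r_j}$ shifts $S$ up in degree, so the contribution of $x^{r_j}S$ to this coefficient is the coefficient of $x^{(s+r_1)-l-r_j}$ in $S$. Rewriting the exponent as $s-(l-(r_1-r_j))$, this contribution equals $\alpha_{l-(r_1-r_j)}(S)$ exactly when $0 \le l-(r_1-r_j) \le s$, and is $0$ otherwise. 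Summing over $j$ in $\F_2[x]$ then gives $\alpha_l(T) = \sum_j \alpha_{l-(r_1-r_j)}(S)$ over those $j$ whose index lies in range.

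It then remains to identify which indices survive. Since $r_1$ is the largest of the $r_j$, we have $r_1-r_j \ge 0$, so the upper bound $l-(r_1-r_j) \le l \le s$ holds automatically from the hypothesis $l \le s$; only the lower bound $l-(r_1-r_j)\ge 0$, i.e. $r_1-r_j \le l$, can fail. As $r_1-r_j$ is strictly increasing in $j$, this holds precisely on an initial segment $j=1,\dots,t$, where $t$ is understood as the largest index with $r_1-r_t \le l$. The surviving terms are therefore $\alpha_l(S), \alpha_{l-(r_1-r_2)}(S), \dots, \alpha_{l-(r_1-r_t)}(S)$, as claimed. I expect the only delicate point to be the bookkeeping that $t$ is exactly the number of exponents $r_j$ with $r_1-r_j \le l$, which I would make explicit in order to justify discarding the remaining terms.

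For part (ii), the key observation is that $\sigma$ is multiplicative and that each local factor $\sigma(P^e)$ agrees with its leading term $P^e$ in its top $\deg P$ coefficients. Writing $S=\prod_i P_i^{e_i}$ with the $P_i$ irreducible, we have $\sigma(P_i^{e_i}) = P_i^{e_i} + R_i$ with $R_i = 1+P_i+\cdots+P_i^{e_i-1}$ of degree $e_i\deg P_i - \deg P_i$, so $\deg R_i \le \deg(P_i^{e_i}) - \deg P_i$. Expanding $\sigma(S)=\prod_i(P_i^{e_i}+R_i)$, the leading product is $\prod_i P_i^{e_i}=S$, while every other term contains at least one factor $R_i$ and so has degree at most $s-\deg P_i \le s-d$, where $d:=\min_i \deg P_i$ is the least degree of an irreducible divisor of $S$. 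Hence $\deg(\sigma(S)-S) \le s-d$. The hypothesis that no irreducible polynomial of degree at most $r$ divides $S$ says exactly $d \ge r+1$, so $\deg(\sigma(S)-S) \le s-r-1$; since $\sigma(S)$ and $S$ both have degree $s$, they share every coefficient of $x^s,\dots,x^{s-r}$, that is $\alpha_l(\sigma(S))=\alpha_l(S)$ for all $l\le r$. This half is routine once the degree gap $\deg P$ between the top term and the tail of each $\sigma(P^e)$ is established, which I would verify at the outset.
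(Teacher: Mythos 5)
Your proof is correct and takes essentially the same route as the paper's, which simply declares part (i) obvious from the definition of $\alpha_l$ and derives part (ii) from the decomposition $\sigma(S)=S+T$ with $\deg T \le s-r-1$; you supply the coefficient bookkeeping and prove that degree bound via multiplicativity of $\sigma$, which is exactly the detail the paper omits. Your explicit observation that $t$ must be read as the largest index with $r_1-r_t\le l$ (so that the terms with $j>t$ genuinely vanish) is a sound clarification of the statement's intended meaning rather than a deviation.
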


\begin{proof}
i): Obvious, by definition of $\alpha_l$.\\
ii) Follows from the fact: $\sigma(S) = S + T$, where $\deg(T) \leq \deg(S)-r-1$.
\end{proof}

\begin{corollary} \label{sommaieven}
i) The integers $\displaystyle{u=\sum_{j \in J} a_j}$ and $\displaystyle{v=\sum_{j \in J} b_j}$ are both even.\\
ii) $U_{2h}$ splits $($over $\F_2)$.\\
iii) $U_{2h}$ is a square so that $\alpha_k(U_{2h}) = 0$ for any odd positive integer $k$.
\end{corollary}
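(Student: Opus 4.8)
The plan is to prove Corollary \ref{sommaieven} by carefully tracking the low-degree coefficients $\alpha_l$ of $\sigma(M^{2h})$ and of $U_{2h}=\sigma(\sigma(M^{2h}))$, using the assumption \eqref{assume} that $\sigma(M^{2h})=\prod_{j\in J}P_j$ is a product of distinct Mersenne primes. Write $\sigma(M^{2h})=1+M+\cdots+M^{2h}$. Since $M=1+x^a(x+1)^b$ with $\deg M=a+b=:d$, the polynomial $\sigma(M^{2h})$ has degree $2hd$, and its top-degree behaviour is governed by $M^{2h}=x^{2ha}(x+1)^{2hb}+\cdots$. On the other hand, writing $\sigma(M^{2h})=\prod_{j\in J}\bigl(1+x^{a_j}(x+1)^{b_j}\bigr)$ and expanding, the leading term is $x^{u}(x+1)^{v}$ where $u=\sum a_j$ and $v=\sum b_j$. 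Matching the leading terms forces $u=2ha$ and $v=2hb$, both of which are even; this already gives part i) directly, with no combinatorial effort. I would state this at the outset since it is the easiest of the three and feeds the rest.

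For part ii), the goal is to show $U_{2h}=\sigma\bigl(\prod_{j\in J}P_j\bigr)=\prod_{j\in J}(1+P_j)=\prod_{j\in J}\bigl(x^{a_j}(x+1)^{b_j}\bigr)$ splits over $\F_2$, i.e.\ is a product of powers of $x$ and $x+1$. This is essentially immediate from \eqref{assume}: because $\sigma$ is multiplicative and $\sigma(P_j)=1+P_j=x^{a_j}(x+1)^{b_j}$ for each Mersenne prime $P_j=1+x^{a_j}(x+1)^{b_j}$, the product $U_{2h}=\prod_{j\in J}x^{a_j}(x+1)^{b_j}=x^{u}(x+1)^{v}$ is manifestly a product of the two linear polynomials. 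So $U_{2h}=x^u(x+1)^v$ splits, which is part ii).

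For part iii), having just identified $U_{2h}=x^u(x+1)^v$ with $u,v$ both even by part i), I would conclude that $U_{2h}=\bigl(x^{u/2}(x+1)^{v/2}\bigr)^2$ is a perfect square in $\F_2[x]$. A square in characteristic $2$ is a polynomial in $x^2$, i.e.\ $U_{2h}(x)=\bigl(G(x)\bigr)^2=G(x^2)$ for some $G$, so every monomial appearing in $U_{2h}$ has even degree. Hence the coefficient $\alpha_k(U_{2h})$ of $x^{\deg(U_{2h})-k}$ vanishes whenever $\deg(U_{2h})-k$ is odd; since $\deg(U_{2h})=u+v$ is even, this is exactly the condition that $k$ is odd, giving the claim $\alpha_k(U_{2h})=0$ for all odd positive $k$.

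The conceptual content is light, so the main care needed is in the bookkeeping: I must be sure the leading-coefficient matching in part i) is valid, which requires confirming that the leading monomial of the expanded product $\prod_j(1+x^{a_j}(x+1)^{b_j})$ is genuinely $x^u(x+1)^v$ with no cancellation of the top term — this holds because over $\F_2$ the product of the highest-degree parts $\prod_j x^{a_j}(x+1)^{b_j}$ contributes the unique monomial of maximal degree $u+v$, and all other terms in the expansion have strictly smaller degree. The only potential obstacle I anticipate is making the degree comparison $\deg\sigma(M^{2h})=2hd=u+v$ rigorous enough to pin down $u=2ha$ and $v=2hb$ separately rather than just their sum; for that I would compare the bidegree (the exponents of $x$ and of $x+1$ in the leading term) on both sides, exploiting that $x$ and $x+1$ are coprime so their exponents are independently determined by the leading term. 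Once that identification is secured, parts ii) and iii) follow with essentially no further work.
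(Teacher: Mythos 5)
Your parts ii) and iii) are correct and coincide with the paper's own proof: from assumption (\ref{assume}) and multiplicativity, $U_{2h}=\prod_{j\in J}\sigma(P_j)=\prod_{j\in J}x^{a_j}(x+1)^{b_j}=x^u(x+1)^v$, which splits, and once $u,v$ are even this is the square of $x^{u/2}(x+1)^{v/2}$; a square in $\F_2[x]$ is a polynomial in $x^2$ and $\deg U_{2h}=u+v$ is even, so $\alpha_k(U_{2h})=0$ for odd $k$.

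The gap is in part i). Your step ``matching the leading terms forces $u=2ha$ and $v=2hb$'' does not work, and the repair you anticipate --- comparing ``bidegrees'', i.e.\ the exponents of $x$ and of $x+1$ in the leading term --- is not available: the leading term of a univariate polynomial is the single monomial $x^{u+v}=x^{2h(a+b)}$, which carries no information about the exponents of $x$ and $x+1$ separately, and the two ``top parts'' $x^{2ha}(x+1)^{2hb}$ and $x^u(x+1)^v$ need not be equal as polynomials, since each side of (\ref{assume}) equals its top part plus a \emph{different} remainder. Concretely, in characteristic $2$ one gets $x^{2ha}(x+1)^{2hb}+x^u(x+1)^v=R_1+R_2$ with $\deg R_1\leq (2h-1)(a+b)\leq u+v-2$ and $\deg R_2\leq u+v-2$, so only the two top coefficients can be matched: degree gives $u+v=2h(a+b)$, and the coefficient of $x^{u+v-1}$ gives $\binom{v}{1}\equiv\binom{2hb}{1}\equiv 0 \pmod 2$, i.e.\ $v$ even, whence $u$ even as well. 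Fortunately that parity statement is exactly what i) asserts, so your argument is repairable by this coefficient comparison (in the spirit of Lemmas \ref{lesalfal} and \ref{alfasigmM2h}); but the stronger identities $u=2ha$, $v=2hb$ are not justified by anything you wrote, and nothing in the corollary needs them. Note that the paper does not reprove i) at all: it simply cites Corollary 4.9 of \cite{Gall-Rahav-mersenn}.
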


\begin{proof}
i): See \cite[Corollary 4.9]{Gall-Rahav-mersenn}.\\
ii) and iii): Assumption (\ref{assume}) implies that $$\displaystyle{U_{2h}= \sigma(\sigma(M^{2h}))=\sigma(\prod_{j\in J} {P_j}) =  \prod_{j \in J} x^{a_j}(x+1)^{b_j} = x^u(x+1)^v},$$ with $u$ and $v$ both even.
\end{proof}

\begin{lemma} \label{alfasigmM2h}
One has modulo $2$: $\alpha_l(\sigma(M^{2h}))=\alpha_l(M^{2h})$ if $1\leq l \leq a+b-1$,
$\alpha_l(\sigma(M^{2h})) = \alpha_l(M^{2h}+M^{2h-1})$ if $a+b \leq l \leq 2(a+b)-1$.
\end{lemma}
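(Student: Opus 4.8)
The plan is to reduce both statements to a single observation about how the terms of the geometric-type sum $\sigma(M^{2h}) = \sum_{k=0}^{2h} M^k$ are spaced by degree. Since $M = x^a(x+1)^b+1$ is monic of degree $a+b$, each power $M^k$ is monic of degree $k(a+b)$, and so $s := \deg \sigma(M^{2h}) = 2h(a+b)$, with the successive summands $M^{2h}, M^{2h-1}, M^{2h-2},\dots$ having strictly decreasing top degrees $s,\ s-(a+b),\ s-2(a+b),\dots$, each gap being exactly $\deg M = a+b$. Because these leading degrees never coincide, there is no cancellation at the top, and $\deg(M^{2h}+M^{2h-1}) = s$ as well.

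First I would record the splitting
$$\sigma(M^{2h}) = M^{2h} + M^{2h-1} + R, \qquad R := \sum_{k=0}^{2h-2} M^k,$$
and note that $\deg R \leq (2h-2)(a+b) = s - 2(a+b)$. Recalling that $\alpha_l(S)$ denotes the coefficient of $x^{\deg S - l}$, adding a polynomial of degree at most $s-2(a+b)$ can only alter the coefficients of $x^{s-l}$ for which $s-l \leq s-2(a+b)$, that is, for $l \geq 2(a+b)$. Hence for every $l$ with $a+b \leq l \leq 2(a+b)-1$ the remainder $R$ contributes nothing, giving $\alpha_l(\sigma(M^{2h})) = \alpha_l(M^{2h}+M^{2h-1})$, which is the second assertion.

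For the first range I would repeat the argument one step higher: the term $M^{2h-1}$ has degree $s-(a+b)$, so it affects only the coefficients of $x^{s-l}$ with $l \geq a+b$. Therefore, for $1 \leq l \leq a+b-1$ both $M^{2h-1}$ and $R$ are irrelevant, and $\alpha_l(\sigma(M^{2h})) = \alpha_l(M^{2h})$, as claimed. All computations take place over $\F_2$, so the identities hold modulo $2$; the monicity of $M$ guarantees that the leading coefficients are $1$ and that the degree bookkeeping above is exact.

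There is no genuine obstacle here: the entire content is the degree-spacing of the powers of $M$, and the only point requiring a moment's care is the correct translation between the index $l$ of $\alpha_l$ (measured downward from the top degree) and the actual degrees $s-l$ of the monomials affected by each summand. Once that indexing is fixed, both equalities follow immediately from the two degree thresholds $l < a+b$ and $l < 2(a+b)$.
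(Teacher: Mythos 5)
Your proof is correct and takes essentially the same route as the paper: the identical decomposition $\sigma(M^{2h}) = M^{2h} + M^{2h-1} + T$ with $\deg(T) \leq (2h-2)(a+b)$, followed by the same degree-threshold bookkeeping, which the paper simply delegates to Lemma~\ref{lesalfal}-ii). Your explicit remark that the leading degrees of the summands never coincide, so that $\alpha_l$ refers to the coefficient of the same monomial $x^{s-l}$ in all the polynomials compared, only makes precise a point the paper leaves implicit.
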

\begin{proof}
Since $\sigma(M^{2h}) = M^{2h} + M^{2h-1} + T$, with $\deg(T) \leq (a+b)(2h-2)=2h(a+b)-2(a+b)$, Lemma \ref{lesalfal}-ii) implies that
$\alpha_l(\sigma(M^{2h})) = \alpha_l(M^{2h})$ if $1\leq l \leq a+b-1$ and
$\alpha_l(\sigma(M^{2h})) = \alpha_l(M^{2h}+M^{2h-1})$
if $a+b\leq l \leq 2(a+b)-1$.
\end{proof}

Lemma below (with analogous proof) is a generalization of Lemma 4.10 in \cite{Gall-Rahav-mersenn}.

\begin{lemma} \label{p-reduction}
If $k$ divides $2h+1$ $($with $k$ prime or not$)$, then $\sigma(M^{k-1})$ divides $\sigma(M^{2h})$.
\end{lemma}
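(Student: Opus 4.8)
The plan is to reduce the divisibility assertion to the elementary factorization of a finite geometric series. Since $M$ is irreducible, the divisors of $M^{n}$ are exactly $1, M, \ldots, M^{n}$, so $\sigma(M^{n}) = \sum_{i=0}^{n} M^{i}$ for every $n \in \N$. In particular $\sigma(M^{k-1}) = \sum_{i=0}^{k-1} M^{i}$ and $\sigma(M^{2h}) = \sum_{\ell=0}^{2h} M^{\ell}$. The claim will therefore follow once I exhibit the second sum as a polynomial multiple of the first.

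First I would write $2h+1 = k m$ for some $m \in \N^{*}$, which is possible precisely because $k$ divides $2h+1$. The key identity is then
\[
\sigma(M^{2h}) = \Big(\sum_{i=0}^{k-1} M^{i}\Big)\Big(\sum_{j=0}^{m-1} M^{jk}\Big) = \sigma(M^{k-1}) \cdot \sum_{j=0}^{m-1} M^{jk}.
\]
To verify it, I would expand the product on the right as $\sum_{i=0}^{k-1}\sum_{j=0}^{m-1} M^{i+jk}$ and observe that, by uniqueness of the representation $\ell = i + jk$ with $0 \le i \le k-1$ and $0 \le j \le m-1$, the exponent $\ell$ runs exactly once over $\{0, 1, \ldots, km-1\} = \{0, \ldots, 2h\}$. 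Hence the product collapses to $\sum_{\ell=0}^{2h} M^{\ell} = \sigma(M^{2h})$, and the second factor $\sum_{j=0}^{m-1} M^{jk} \in \F_2[x]$ is a genuine polynomial. This yields precisely the divisibility $\sigma(M^{k-1}) \mid \sigma(M^{2h})$.

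There is essentially no analytic obstacle here: the argument is a purely formal manipulation valid over any commutative ring, and in particular over $\F_2$, where the characteristic plays no special role. The only point requiring a little care is the bookkeeping of exponents in the double sum, namely checking that $(i,j) \mapsto i+jk$ is a bijection onto $\{0,\ldots,2h\}$; this is the content of the division algorithm and is exactly where the hypothesis $k \mid 2h+1$ is used. As the excerpt notes, this is the natural generalization of Lemma 4.10 in \cite{Gall-Rahav-mersenn} (which treats the prime case $k=p$), and the same proof carries over once $k$ is allowed to be composite.
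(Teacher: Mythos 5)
Your proof is correct, and it is exactly the standard geometric-series factorization $\sum_{\ell=0}^{km-1} M^{\ell} = \bigl(\sum_{i=0}^{k-1} M^{i}\bigr)\bigl(\sum_{j=0}^{m-1} M^{jk}\bigr)$ that the paper has in mind when it omits the proof and merely says the lemma follows by an argument ``analogous'' to Lemma 4.10 of the cited reference. You have in effect supplied the omitted details, including the one point that deserves care (the bijectivity of $(i,j)\mapsto i+jk$, i.e.\ the division algorithm), so nothing further is needed.
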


We fix a prime factor $p$ of $2h+1$. We denote by $ord_p(2)$ the order of $2$ in $\F_p \setminus \{0\}$.

\begin{lemma} \label{ord(2)divise}
For any $j \in J$, $ord_p(2)$ divides $a_j+b_j = \deg(P_j)$.
\end{lemma}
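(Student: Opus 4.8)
The plan is to pass to a splitting field and convert the divisibility $ord_p(2)\mid\deg(P_j)$ into a statement about a subfield generated by a primitive $p$-th root of unity. First I would fix $j\in J$, choose a root $\theta$ of $P_j$ in $\overline{\F_2}$, and record that $\F_2(\theta)=\F_{2^{a_j+b_j}}$, since $P_j$ is irreducible of degree $\deg(P_j)=a_j+b_j$. Because $P_j$ divides $\sigma(M^{2h})$ and $(M+1)\,\sigma(M^{2h})=M^{2h+1}+1$, evaluating at $\theta$ gives $M(\theta)^{2h+1}=1$; moreover $M(\theta)\neq 1$, for otherwise $\theta$ would be a root of $M+1=x^a(x+1)^b$, forcing $\theta\in\{0,1\}$, where $\sigma(M^{2h})$ takes the value $1\neq 0$. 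Thus $\zeta:=M(\theta)$ is a nontrivial $(2h+1)$-th root of unity lying in $\F_2(\theta)$.

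Next I would extract a primitive $p$-th root of unity from $\zeta$. Writing $2h+1=p\,m$ and setting $\eta:=\zeta^{m}$, one has $\eta^{p}=1$ and $\eta\in\F_2(\theta)$ since $M\in\F_2[x]$. As soon as $\eta\neq 1$, the element $\eta$ is a primitive $p$-th root of unity, so the least $e$ with $\eta\in\F_{2^e}$ is the least $e$ with $2^e\equiv 1\pmod p$, namely $e=ord_p(2)$; hence its minimal polynomial over $\F_2$ has degree $ord_p(2)$. Therefore $\F_{2^{ord_p(2)}}=\F_2(\eta)\subseteq\F_2(\theta)=\F_{2^{a_j+b_j}}$, and since $\F_{2^e}\subseteq\F_{2^d}$ forces $e\mid d$, we conclude $ord_p(2)\mid a_j+b_j$, which is the claim.

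The hard part will be the degenerate case $\eta=1$, in which $\zeta=M(\theta)$ has order dividing $m=(2h+1)/p$ and no primitive $p$-th root of unity is visibly produced. Here I would lean on Lemma~\ref{p-reduction}: since $p\mid 2h+1$, $\sigma(M^{p-1})$ divides $\sigma(M^{2h})$, and every root $\theta$ of an irreducible factor of $\sigma(M^{p-1})$ satisfies $M(\theta)^{p}=1$ with $M(\theta)\neq 1$, so $\zeta=M(\theta)$ has order exactly $p$ and $\eta=\zeta\neq 1$ automatically. The argument of the previous paragraph then applies verbatim to these factors. This is exactly what the reduction already invoked in this section needs: to locate a non-Mersenne prime in $\sigma(M^{2h})$ it suffices to locate one in $\sigma(M^{p-1})$, and by~(\ref{assume}) the (distinct) irreducible factors of $\sigma(M^{p-1})$ occur among the $P_j$, so the degree constraint $ord_p(2)\mid\deg(P_j)$ holds for precisely the factors that feed the counting step of Lemma~\ref{phiandirreduc} in the proof of Theorem~\ref{result0}.

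I expect the only genuine subtlety to be the bookkeeping of the order of $\zeta$ and the verification that $\eta\neq 1$ on the factors coming from $\sigma(M^{p-1})$; the field-embedding step itself is routine Galois theory of finite fields. If instead one reads Lemma~\ref{ord(2)divise} after the reduction of~(\ref{assume}) to $\sigma(M^{p-1})$ (equivalently, taking $2h+1=p$ prime), then every $\zeta=M(\theta)$ is automatically a primitive $p$-th root of unity and the statement holds for \emph{all} $j\in J$ with no case distinction, the clean embedding argument giving $ord_p(2)\mid\deg(P_j)$ directly.
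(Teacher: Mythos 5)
Your first two paragraphs are correct as far as they go: they establish that if $\theta$ is a root of $P_j$ and the order of $\zeta=M(\theta)$ is divisible by $p$, then $ord_p(2)$ divides $\deg(P_j)$; in particular this covers every irreducible factor of $\sigma(M^{p-1})$, and the full lemma in the special case $2h+1=p$. But the lemma asserts the divisibility for \emph{every} $j\in J$, with $2h+1$ an arbitrary multiple of $p$, and your degenerate case $\eta=1$ is exactly where this is left unproved: for a factor $P_j$ whose $\zeta$ has order coprime to $p$ (for instance the factors of $\sigma(M^{q-1})$ for another prime $q$ dividing $2h+1$) your argument says nothing, and your third paragraph does not close the case --- it quietly replaces the statement by the weaker one about divisors of $\sigma(M^{p-1})$. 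The gap is genuine and cannot be repaired by any one-factor-at-a-time argument, because your computation never uses assumption (\ref{assume}) --- not even the Mersenne shape of $P_j$ itself --- and without that \emph{global} hypothesis the statement is false. Concretely, take $M=M_2$, $2h+1=33$, $p=11$: since $3$ divides $33$, Lemma \ref{p-reduction} gives $\sigma(M_2^{2})\mid\sigma(M_2^{32})$, and the Mersenne prime $M_1=x^2+x+1$ of degree $2$ divides $\sigma(M_2^{2})=x^6+x^3+x^2+x+1=M_1\overline{M_3}$, while $ord_{11}(2)=10\nmid 2$. What rescues the lemma in such a situation is only that (\ref{assume}) must then fail for $\sigma(M_2^{32})$ (some \emph{other} factor is non-Mersenne), a fact invisible to a local root-of-unity computation at one $P_j$.

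The paper's proof is global precisely where yours is local: it sets $d=\gcd_i(a_i+b_i)$ and quotes Lemma 4.13 of \cite{Gall-Rahav-mersenn}, which under (\ref{assume}) yields $p\mid 2^d-1$, whence $ord_p(2)\mid d\mid a_j+b_j$ for all $j$ simultaneously; the hypothesis that \emph{all} the $P_j$ are Mersenne is consumed there, tying the degrees together through their gcd. That said, your weaker conclusion would in fact suffice for the lemma's only application: in Corollary \ref{pFermat} a single $P_j$ with $8\mid ord_p(2)\mid\deg(P_j)$ already contradicts Lemma \ref{mersennedeg8k}, and the nonconstant polynomial $\sigma(M^{p-1})$ supplies such a factor, which lies among the $P_j$ by Lemma \ref{p-reduction} and (\ref{assume}). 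So your argument could be salvaged by restating the lemma as ``every irreducible factor of $\sigma(M^{p-1})$ has degree divisible by $ord_p(2)$''; as a proof of Lemma \ref{ord(2)divise} as written, it is incomplete.
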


\begin{proof}
Let $d=\gcd_i(a_i+b_i)$. By Lemma 4.13 in \cite{Gall-Rahav-mersenn}, $p$ divides $2^d-1$. Thus, $ord_p(2)$ divides $d$.
\end{proof}

\begin{lemma} \label{reduction2}
Let $P_i = 1+x^{a_i}(x+1)^{b_i}$ be a prime divisor of $\sigma(M^{p-1}),$ where $2^{a_i+b_i}-1 = p_i$ is a prime number.  Then\\
i) any irreducible polynomial $($Mersenne or not$)$ of degree $a_i+b_i$ divides $\sigma(M^{p-1})$.\\
ii) $\sigma(M^{p-1})$ is divisible by a non Mersenne prime if $a_i+b_i \geq 4$.
\end{lemma}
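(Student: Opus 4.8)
The plan is to reinterpret $\sigma(M^{p-1})$ as a cyclotomic value and then reduce the whole statement to elementary arithmetic in a finite field of \emph{prime} order. Since $p$ is prime and we are in characteristic $2$, I would first record the identity $\sigma(M^{p-1}) = 1 + M + \cdots + M^{p-1} = (M^p+1)/(M+1) = \Phi_p(M)$, where $\Phi_p$ is the $p$-th cyclotomic polynomial. The guiding principle is the following translation: for an irreducible $Q \in \F_2[x]$ with $d = \deg(Q)$, so that $\F_2[x]/(Q) \cong \F_{2^d}$, the divisibility $Q \mid \sigma(M^{p-1})$ is equivalent to the residue $M \bmod Q$ being a primitive $p$-th root of unity in $\F_{2^d}$. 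Indeed $\Phi_p(1) = p \neq 0$ in $\F_2$, and for $\alpha \neq 1$ the relation $\alpha^p + 1 = (\alpha+1)\Phi_p(\alpha)$ in the field $\F_{2^d}$ shows $\Phi_p(\alpha) = 0$ exactly when $\alpha^p = 1$.

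Next I would pin down the relevant order. From $P_i \mid \sigma(M^{p-1})$ the residue of $M$ modulo $P_i$ is a primitive $p$-th root of unity in $\F_{2^{d_i}}$ with $d_i = a_i + b_i$, so $p$ divides $|\F_{2^{d_i}}^{*}| = 2^{d_i} - 1 = p_i$. Because $p_i$ is assumed prime, this forces $p = p_i = 2^{d_i}-1$. The crucial consequence is that the cyclic group $\F_{2^{d_i}}^{*}$ has prime order $p$, hence \emph{every} one of its non-identity elements has order exactly $p$, i.e. is a primitive $p$-th root of unity.

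For part i), let $Q$ be any irreducible polynomial of degree $d_i$. Since $P_i$ is a Mersenne prime we have $a_i, b_i \geq 1$, so $d_i \geq 2$ and $Q \notin \{x, x+1\}$; therefore $Q$ does not divide $M+1 = x^a(x+1)^b$, which says precisely that $M \bmod Q \neq 1$ in $\F_{2^{d_i}}$. By the previous paragraph this residue is then a primitive $p$-th root of unity, whence $Q \mid \Phi_p(M) = \sigma(M^{p-1})$. As $Q$ was an arbitrary irreducible of degree $d_i$, this proves i). Part ii) is then immediate: assuming $d_i \geq 4$, Lemma \ref{phiandirreduc} iii) supplies an irreducible polynomial of degree $d_i$ that is not a Mersenne prime, and by i) it divides $\sigma(M^{p-1})$.

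I expect the main difficulty to be conceptual rather than computational: the passage between $Q \mid \sigma(M^{p-1})$ and the statement that $M \bmod Q$ is a primitive $p$-th root of unity must be made watertight, which requires justifying both the identity $\sigma(M^{p-1}) = \Phi_p(M)$ and that $\Phi_p$ vanishes at an element exactly when it has order $p$. Once the hypothesis that $p_i$ is prime is converted into the statement that $\F_{2^{d_i}}^{*}$ has prime order $p$, the rest is a soft existence argument. The real force of the prime-$p_i$ assumption is that it erases the distinction between ``$\alpha$ is a root of $\Phi_p$'' and ``$\alpha$ is any non-identity residue of $M$'', so that divisibility becomes automatic for every irreducible of degree $d_i$ and Lemma \ref{phiandirreduc} can then be invoked to produce a non-Mersenne factor.
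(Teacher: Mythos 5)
Your proof is correct and takes essentially the same route as the paper's: the paper likewise derives $p=p_i$ by writing $M(\alpha)=\alpha^r$ for a root $\alpha$ of the primitive polynomial $P_i$ and concluding $p_i \mid rp$, then uses that every irreducible of degree $a_i+b_i$ is primitive (since $2^{a_i+b_i}-1=p$ is prime) to get i), and invokes Lemma \ref{phiandirreduc}-iii) for ii) --- your cyclotomic packaging $\sigma(M^{p-1})=\Phi_p(M)$ is just a rephrasing of this. One shared caveat: like the paper's own proof, your part i) tacitly assumes $Q \neq M$ (you check $M \bmod Q \neq 1$ but not $M \bmod Q \neq 0$), which is harmless for part ii) because the non-Mersenne irreducible furnished by Lemma \ref{phiandirreduc}-iii) cannot equal the Mersenne prime $M$.
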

\begin{proof}
First, $P_i$ is a primitive polynomial. Let $\alpha$ be a root of $P_i$. One has $M(\alpha)^p = 1$, $M(\alpha) = \alpha^r$ for some $1\leq r \leq p_i-1$. Thus, $1= M(\alpha)^p = \alpha^{rp}$, with $ord(\alpha) = p_i$. So, $p_i$ divides $rp$ and $p_i=p$.\\
i): If $P$ is an irreducible polynomial of degree $a_i+b_i$, then $P$ is primitive. Let $\beta$ be a root of $P$. One has $ord(\beta) = p_i=p$, $P(\beta)=0$ and $M(\beta) = \beta^s$, for some $1\leq s \leq p_i-1$. Thus, $M(\beta)^{p} = \beta^{ps}= 1$.\\
ii) follows from i) and from Lemma \ref{phiandirreduc}-iii).
\end{proof}
\begin{corollary} \label{reduction3}
For any $i \in J$, $a_i+b_i \leq 3$ or $2^{a_i+b_i} -1$ is not prime.
\end{corollary}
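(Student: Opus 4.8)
The plan is to derive the statement from Lemma~\ref{reduction2} by contradiction, the only real work being to bridge the gap between divisors of $\sigma(M^{2h})$ (those indexed by $J$) and divisors of the smaller sum $\sigma(M^{p-1})$ to which that lemma directly applies. Accordingly I would suppose, for contradiction, that some $i \in J$ satisfies both $d_i := a_i + b_i \geq 4$ and $q := 2^{d_i} - 1$ prime, and aim to exhibit a non Mersenne prime factor of $\sigma(M^{2h})$, contradicting the standing assumption~(\ref{assume}).

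First I would record that $P_i$ is primitive: since $q = 2^{d_i}-1$ is prime, every root of the degree-$d_i$ irreducible $P_i$ has multiplicative order dividing $q$ and exceeding $1$, hence equal to $q$. Next I would use $P_i \mid \sigma(M^{2h})$. Since $\sigma(M^{2h}) = (M^{2h+1}+1)/(M+1)$ over $\F_2$, we have $\sigma(M^{2h}) \mid M^{2h+1}+1$, so a root $\alpha$ of $P_i$ satisfies $M(\alpha)^{2h+1}=1$. As $\alpha \notin \{0,1\}$ (its minimal polynomial has degree $\geq 4$), the value $M(\alpha)=\alpha^a(\alpha+1)^b+1$ is neither $1$ nor $0$; the latter because $M(\alpha)=0$ would force $P_i=M$, impossible as $\sigma(M^{2h}) \equiv 1 \bmod M$. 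Hence $M(\alpha)=\alpha^r$ with $1 \le r \le q-1$, and from $\alpha^{r(2h+1)}=1$ with $\mathrm{ord}(\alpha)=q$ prime and $q \nmid r$ I would conclude $q \mid 2h+1$.

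This last step is the crux, since it shows the exponent $q-1$ is ``reachable'' inside $2h$, which is exactly what allows Lemma~\ref{reduction2} to fire. Indeed $M(\alpha)^q=1$ together with $M(\alpha)\neq 1$ yields $\sigma(M^{q-1})(\alpha)=0$, i.e. $P_i \mid \sigma(M^{q-1})$, while Lemma~\ref{p-reduction} (taken with $k=q \mid 2h+1$) gives $\sigma(M^{q-1}) \mid \sigma(M^{2h})$. Because $q$ is a prime factor of $2h+1$ and $d_i \geq 4$, Lemma~\ref{reduction2}-ii), instantiated at the prime $q$ in place of $p$ (the lemmas of this section hold for any prime factor of $2h+1$), produces a non Mersenne prime dividing $\sigma(M^{q-1})$, and therefore dividing $\sigma(M^{2h})$ --- contradicting~(\ref{assume}). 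That contradiction proves the corollary.

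I expect the main obstacle to be precisely this transfer from $\sigma(M^{2h})$ down to $\sigma(M^{q-1})$: one must verify that the prime $q=2^{d_i}-1$ attached to $P_i$ genuinely divides $2h+1$, via the primitivity/order computation above, since only then is $q$ a legitimate choice for the prime $p$ in Lemmas~\ref{p-reduction} and~\ref{reduction2}. The remaining points --- that $\sigma(M^{2h}) \mid M^{2h+1}+1$ and that a root of $P_i$ can be neither a root of $M$ nor equal to $0$ or $1$ --- are routine bookkeeping.
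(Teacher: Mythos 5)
Your proof is correct and matches the paper's intended argument: Corollary~\ref{reduction3} is stated there without proof as an immediate consequence of Lemma~\ref{reduction2}, and your order computation showing $q=2^{a_i+b_i}-1$ must divide $2h+1$ (so that $P_i \mid \sigma(M^{q-1})$ and Lemma~\ref{p-reduction} transfers divisibility up to $\sigma(M^{2h})$) is exactly the primitivity argument used in the paper's proof of that lemma, here carried out for the exponent $2h+1$ instead of $p$. You have merely made explicit the bridging step the authors left implicit, so there is nothing to correct.
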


\begin{lemma} \label{PandQ}
Let $P, Q \in \F_2[x]$ such that $\deg(P)=r$, $2^r-1$ is prime, $P \nmid Q(Q+1)$ but $P \mid Q^p +1$. Then $2^r-1=p$.
\end{lemma}
\begin{proof}
Let $\beta$ be a root of $P$. $\beta$ is primitive, $ord(\beta) = 2^r-1$, $Q(\beta) \not\in \{0,1\}$ because $P \nmid Q(Q+1)$. Thus, $Q(\beta) = \beta^t$ for some $1\leq t \leq 2^r-2$. Hence, $1=Q(\beta)^p = \beta^{tp}.$ So, $2^r-1$ divides $tp$ and $2^r-1=p$.
\end{proof}
\begin{corollary} \label{notdivisor}
Let $r \in \N^*$ such that $2^r-1$ is a prime distinct from $p$. Then, no irreducible polynomial of degree $r$ divides $\sigma(M^{p-1})$.
\end{corollary}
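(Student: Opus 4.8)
The plan is to derive this as an immediate consequence of Lemma \ref{PandQ}, applied with $Q = M$. The key algebraic observation is that over $\F_2$ one has
$$(M+1)\,\sigma(M^{p-1}) = (M+1)(1 + M + \cdots + M^{p-1}) = M^p + 1,$$
since all the intermediate terms cancel in pairs in characteristic $2$. Consequently, every irreducible factor of $\sigma(M^{p-1})$ also divides $M^p + 1$. Thus, arguing by contradiction, if $P$ is an irreducible polynomial of degree $r$ with $P \mid \sigma(M^{p-1})$, then $P \mid M^p + 1$, which is precisely the divisibility hypothesis $P \mid Q^p + 1$ of Lemma \ref{PandQ} with $Q = M$.

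Next I would verify the remaining hypothesis of that lemma, namely $P \nmid M(M+1)$. Because $M$ is a Mersenne prime, we have $M + 1 = x^a(x+1)^b$, so the irreducible factors of $M(M+1)$ are exactly $x$, $x+1$ and $M$ itself. The assumption that $2^r-1$ is prime forces $r \geq 2$ (as $2^1 - 1 = 1$ is not prime), which excludes $P \in \{x, x+1\}$. Moreover $P \neq M$, since $\sigma(M^{p-1}) \equiv 1 \pmod{M}$ shows that $M \nmid \sigma(M^{p-1})$. Hence $P$ divides none of the factors of $M(M+1)$, that is, $P \nmid M(M+1)$.

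With the degree condition, the primality of $2^r-1$, the divisibility $P \mid M^p+1$, and the coprimality $P \nmid M(M+1)$ all in hand, Lemma \ref{PandQ} forces $2^r - 1 = p$. This contradicts the standing assumption that $2^r - 1$ is a prime \emph{distinct} from $p$, so no such $P$ can exist, which is exactly the claim. I do not anticipate any genuine obstacle here: the corollary is essentially a repackaging of Lemma \ref{PandQ}, and the only point requiring a moment's care is checking that the three excluded factors $x$, $x+1$, $M$ of $M(M+1)$ cause no trouble, which the bound $r \geq 2$ and the congruence $\sigma(M^{p-1}) \equiv 1 \pmod{M}$ dispatch cleanly.
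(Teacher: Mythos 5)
Your proposal is correct and follows the paper's own proof exactly: the paper likewise notes that $P \mid \sigma(M^{p-1})$ implies $P \mid M^p+1$ and then invokes Lemma \ref{PandQ} with $Q=M$ to reach the contradiction $2^r-1=p$. Your only addition is the explicit verification that $P \nmid M(M+1)$ (via $r \geq 2$ and $\sigma(M^{p-1}) \equiv 1 \pmod{M}$), a hypothesis the paper leaves implicit, and your check is accurate.
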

\begin{proof}
If $P$ divides $\sigma(M^{p-1})$ with $\deg(P)=r$, then $P$ divides $M^p+1$ and by taking $Q = M$ in the above lemma, we get a contradiction.
\end{proof}

In the following three lemma and corollaries, we suppose that $p$ is a Mersenne prime of the form $2^m-1$ (with $m$ prime).

\begin{lemma} \label{polyPandQ}
Let $P, Q \in \F_2[x]$ such that $P$ is irreducible of degree $m$ and $P \nmid Q(Q+1)$.
Then, $P$ divides $Q^p +1$.
\end{lemma}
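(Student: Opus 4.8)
The plan is to pass to a root of $P$ in the splitting field and exploit that $p = 2^m - 1$ is prime. First I would fix a root $\beta$ of $P$ in an algebraic closure of $\F_2$. Since $P$ is irreducible of degree $m$, we have $\F_2(\beta) = \F_{2^m}$, whose multiplicative group $\F_{2^m}^{*}$ has order $2^m - 1 = p$. Because $p$ is prime, Lagrange's theorem forces $\gamma^p = 1$ for every $\gamma \in \F_{2^m}^{*}$.

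Next I would invoke the hypothesis $P \nmid Q(Q+1)$, which in particular gives $Q(\beta) \neq 0$, so that $Q(\beta) \in \F_{2^m}^{*}$. Applying the previous step yields $Q(\beta)^p = 1$, and since we are in characteristic $2$ this reads $Q(\beta)^p + 1 = 0$. Hence $\beta$ is a root of the polynomial $Q^p + 1 \in \F_2[x]$.

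Finally, because $P$ is irreducible over $\F_2$ and admits $\beta$ as a root, $P$ is the minimal polynomial of $\beta$ (monic normalization being automatic over $\F_2$); consequently $P$ divides every element of $\F_2[x]$ vanishing at $\beta$, in particular $P \mid Q^p + 1$, as claimed.

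I do not expect a genuine obstacle here: the statement is a direct consequence of the structure of $\F_{2^m}^{*}$ together with the primality of $p$. The only subtlety worth flagging is that the full hypothesis $P \nmid Q(Q+1)$ is stronger than strictly needed for this conclusion, since only $P \nmid Q$ (equivalently $Q(\beta) \neq 0$) enters the argument; the case $Q(\beta) = 1$ would equally satisfy $Q(\beta)^p = 1$. The stronger hypothesis is presumably retained to mirror the statement of Lemma \ref{PandQ}.
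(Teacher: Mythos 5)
Your proof is correct and takes essentially the same approach as the paper's: both evaluate at a root $\beta$ of $P$ in $\F_{2^m}$ and use that $\F_{2^m}^{*}$ has prime order $p=2^m-1$, the paper via primitivity of $\beta$ (writing $Q(\beta)=\beta^t$ with $1\leq t\leq p-1$), you via Lagrange's theorem applied directly to $Q(\beta)\in\F_{2^m}^{*}$. Your side remark is also accurate: since the Lagrange argument absorbs the case $Q(\beta)=1$, only $P \nmid Q$ is actually needed, whereas the paper's proof uses the full hypothesis $P \nmid Q(Q+1)$ to ensure $Q(\beta)\not\in\{0,1\}$ before expressing it as a power of $\beta$, in parallel with Lemma \ref{PandQ}.
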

\begin{proof}
Let $\beta$ be a root of $P$. $P$ and $\beta$ are primitive, $ord(\beta) = 2^m-1=p$, $Q(\beta) \not\in \{0,1\}$. Thus, $Q(\beta) = \beta^t$ for some $1\leq t \leq p-1$. Hence, $Q(\beta)^p = \beta^{tp} = 1.$ So, $P$ divides $Q^p + 1$.
\end{proof}
\begin{corollary} \label{anydivides}
Any irreducible polynomial $P \not = M$ $($Mersenne or not$)$, of degree $m$, divides $\sigma(M^{p-1})$.
\end{corollary}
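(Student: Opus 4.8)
The plan is to deduce the corollary directly from Lemma \ref{polyPandQ} by taking $Q = M$, once the factorization of $\sigma(M^{p-1})$ over $\F_2$ is recorded. First I would note that, in characteristic $2$,
$$M^p + 1 = (M+1)\bigl(1 + M + \cdots + M^{p-1}\bigr) = (M+1)\,\sigma(M^{p-1}),$$
so showing that $P$ divides $\sigma(M^{p-1})$ reduces to showing that $P$ divides $M^p + 1$ while avoiding the spurious factor $M+1$.

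The central point is the verification that $P \nmid M(M+1)$, which is precisely the hypothesis needed to invoke Lemma \ref{polyPandQ}. Since $M = 1 + x^a(x+1)^b$ is irreducible and $M+1 = x^a(x+1)^b$, the only irreducible factors of the product $M(M+1)$ are $M$, $x$ and $x+1$. Because $p = 2^m - 1$ is prime with $m$ prime, we have $m \geq 2$, whence $\deg(P) = m > 1 = \deg(x) = \deg(x+1)$; thus $P \neq x$ and $P \neq x+1$. Together with the hypothesis $P \neq M$, this yields $P \nmid M(M+1)$.

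Applying Lemma \ref{polyPandQ} with $Q = M$ then gives that $P$ divides $M^p + 1 = (M+1)\,\sigma(M^{p-1})$. As $P$ is irreducible, it must divide one of the two factors; but $M+1 = x^a(x+1)^b$ has only the degree-one primes $x$ and $x+1$ as irreducible factors, and $\deg(P) = m \geq 2$, so $P \nmid M+1$. Hence $P$ divides $\sigma(M^{p-1})$, which is exactly the assertion.

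I do not anticipate a genuine obstacle: essentially all the arithmetic content sits inside Lemma \ref{polyPandQ}, and the remaining work is only the bookkeeping that strips off the factor $M+1$ and excludes the low-degree primes $x$ and $x+1$ via $m \geq 2$. The one thing I would be careful about is not conflating ``$P$ divides $M^p+1$'' with ``$P$ divides $\sigma(M^{p-1})$'', since that separation is the single substantive step beyond the lemma.
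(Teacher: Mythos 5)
Your proof is correct and follows the same route as the paper: verify $P \nmid M(M+1)$ and apply Lemma \ref{polyPandQ} with $Q = M$. The only difference is that you spell out the final bookkeeping step --- factoring $M^p+1 = (M+1)\,\sigma(M^{p-1})$ and excluding $P \mid M+1$ via $\deg(P) = m \geq 2$ --- which the paper's one-line proof leaves implicit.
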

\begin{proof}
$P$ does not divide $x^a(x+1)^bM=M(M+1)=Q(Q+1)$. So, we apply Lemma \ref{polyPandQ} to $Q = M$.
\end{proof}
\begin{corollary} \label{reduction4}
The polynomial
$1+x+x^2$ divides $\sigma(M^{p-1})$ if and only if $(M \not= 1+x+x^2$ and $p=3)$,\\
$1+x^2+x^3$ divides $\sigma(M^{p-1})$ if and only if $M \not= 1+x^2+x^3$ and $p=7$,\\
$1+x+x^3$ divides $\sigma(M^{p-1})$ if and only if $M \not= 1+x+x^3$ and $p=7$.
\end{corollary}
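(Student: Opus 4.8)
The plan is to handle the three equivalences uniformly. Write $P_0$ for the irreducible polynomial in question, set $r = \deg(P_0)$, and put $q = 2^r - 1$. For the three cases $(r,q) = (2,3), (3,7), (3,7)$, so in each case $q$ is a Mersenne prime and, crucially, the exponent $r$ is itself prime; thus the standing hypothesis of Corollary \ref{anydivides} (namely $p = 2^m-1$ with $m$ prime) is met with $m = r$ exactly when $p = q$. Almost all of the content is already carried by Corollaries \ref{anydivides} and \ref{notdivisor}, so the argument should be short.

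The one elementary input I would record first is that $M \nmid \sigma(M^{p-1})$, since $\sigma(M^{p-1}) = 1 + M + \cdots + M^{p-1} \equiv 1 \pmod{M}$. Consequently, whenever $P_0 \mid \sigma(M^{p-1})$ we automatically get $P_0 \neq M$, which disposes of the clause ``$M \neq P_0$'' in each of the three statements.

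For the implication $(\Leftarrow)$, assume $M \neq P_0$ and $p = q = 2^r - 1$. Since $r$ is prime, Corollary \ref{anydivides} with $m = r$ asserts that every irreducible polynomial of degree $r$ other than $M$ divides $\sigma(M^{p-1})$; as $P_0$ is irreducible of degree $r$ and $P_0 \neq M$, it follows that $P_0 \mid \sigma(M^{p-1})$. For $(\Rightarrow)$, assume $P_0 \mid \sigma(M^{p-1})$. Because $2^r - 1 = q$ is prime and $P_0$ is an irreducible of degree $r$ dividing $\sigma(M^{p-1})$, the contrapositive of Corollary \ref{notdivisor} forces $p = q$; combined with $P_0 \neq M$ from the previous paragraph, this yields precisely the right-hand side of each equivalence.

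There is no genuine obstacle here: the only bookkeeping is to note that $x^2+x+1$ is the unique irreducible of degree $2$ and that $x^3+x+1$, $x^3+x^2+1$ are the two irreducibles of degree $3$, while $3 = 2^2-1$ and $7 = 2^3-1$ are the matching Mersenne primes with prime exponent. The substance is entirely contained in Corollaries \ref{anydivides} and \ref{notdivisor}, and the proof is just their combination with the trivial congruence $\sigma(M^{p-1}) \equiv 1 \pmod{M}$.
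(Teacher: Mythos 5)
Your proposal is correct and takes essentially the same approach as the paper, whose entire proof is ``Apply Corollary \ref{anydivides} with $m\in\{2,3\}$'': you use Corollary \ref{anydivides} for the ``if'' direction and the contrapositive of Corollary \ref{notdivisor} for the ``only if'' direction, which is exactly the pair of results the paper itself invokes when proving the analogous Lemma \ref{divisordeg}. The only difference is explicitness---you spell out the converse direction and the clause $M\neq P_0$ via the congruence $\sigma(M^{p-1})\equiv 1 \pmod{M}$, steps the paper's one-line proof leaves implicit.
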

\begin{proof} Apply Corollary \ref{anydivides} with $m\in \{2,3\}$.
\end{proof}

\subsection{Case $M \in \{M_1, M_3, \overline{M_3}\}$}\label{M=M123}
Lemma \ref{omegasigmM2h} implies that $M \not= M_1$. It suffices to suppose that $M =M_3$.

We refer to Section 5.2 in \cite{Gall-Rahav13}. Put $U:=M_1M_2\overline{M_2}$. By \cite[ Lemma 5.4]{Gall-Rahav13}, we have to consider four cases:\\
i) $\gcd(\sigma(M^{2h}),U) = 1$, \\
ii) $\sigma(M^{2h}) = M_1 B$, with $\gcd(B,U)=1$,\\
iii) $\sigma(M^{2h}) = M_2\overline{M_2} B$, with $\gcd(B,U)=1$,\\
iv) $\sigma(M^{2h}) = U B$, with $\gcd(B,U)=1$,\\
where any irreducible divisor of $B$ has degree exceeding $5$.\\
We get Lemma below which contradicts the fact that $U_{2h}$ is a square.
\begin{lemma}
$\alpha_3(U_{2h})= 1$ or $\alpha_5(U_{2h})= 1$.
\end{lemma}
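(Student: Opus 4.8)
The plan is to prove the stronger assertion that in each of the four cases (i)--(iv) one of $\alpha_3(U_{2h})$, $\alpha_5(U_{2h})$ equals $1$; since $U_{2h}$ is a square by Corollary \ref{sommaieven}-iii, all its odd-indexed coefficients vanish, so this is exactly the desired contradiction. Throughout we use $M = M_3 = x^4+x^3+x^2+x+1$, so that $M^2 = x^8+x^6+x^4+x^2+1$ has only even-degree terms; consequently $\alpha_l(M^{2h}) = 0$ for every odd $l$, while the coefficient of $x^{8h-5}$ in $M^{2h-1} = M\,(M^2)^{h-1}$ equals $1$. Feeding this into Lemma \ref{alfasigmM2h} (with $a+b=4$) yields the low coefficients of $\Sigma := \sigma(M^{2h})$ that we need: $\alpha_0(\Sigma)=1$, $\alpha_1(\Sigma)=\alpha_3(\Sigma)=0$, $\alpha_5(\Sigma)=1$, whereas $\alpha_2(\Sigma)$ and $\alpha_4(\Sigma)$ are explicit expressions in $h$ whose precise values turn out to be irrelevant because they cancel.

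First I would exploit the multiplicativity of $\sigma$. In each case write $\Sigma = D\cdot B$, where $D$ is the low-degree part dictated by the case ($D=1$, $M_1$, $M_2\overline{M_2}$, $M_1M_2\overline{M_2}$ respectively) and every irreducible factor of $B$ has degree exceeding $5$. Then $U_{2h} = \sigma(D)\,\sigma(B)$ with $\sigma(D)$ explicit: $\sigma(M_1)=x(x+1)$, $\sigma(M_2\overline{M_2})=x^3(x+1)^3$, and $\sigma(M_1M_2\overline{M_2})=x^4(x+1)^4$. By Lemma \ref{lesalfal}-ii the absence of irreducible factors of degree $\le 5$ in $B$ gives $\alpha_l(\sigma(B))=\alpha_l(B)$ for $0\le l\le 5$, so it remains to track coefficients through the two multiplications: by $D$ (to recover $\alpha_l(B)$ from $\alpha_l(\Sigma)$) and by $\sigma(D)$ (to produce $\alpha_l(U_{2h})$).

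Both steps are handled by Lemma \ref{lesalfal}-i, which expresses $\alpha_l$ of a product by a fixed polynomial as a short sum of shifted $\alpha$'s. Dividing out $D$: since $M_1 = x^2+x+1$ and $M_2\overline{M_2}=1+x+\cdots+x^6$, the relation $\alpha_l(\Sigma)=\sum_j \alpha_{l-j}(B)$ can be inverted recursively for $l\le 5$, giving each $\alpha_l(B)$ as a combination of the already-computed $\alpha_{\le l}(\Sigma)$. Multiplying back by $\sigma(D)$ (which in the four cases is $1$, $x^2+x$, $(x^2+x)^3=x^6+x^5+x^4+x^3$, and $(x^2+x)^4=x^8+x^4$) and applying the shifts prescribed by Lemma \ref{lesalfal}-i yields the two target coefficients. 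Carrying this out gives $\alpha_5(U_{2h})=1$ in cases (i) and (iii), and $\alpha_3(U_{2h})=1$ in cases (ii) and (iv), with all $h$-dependent terms cancelling.

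I expect the main obstacle to be ensuring the reductions are legitimate rather than the arithmetic itself. Specifically, one must verify that the four cases are exhaustive and, crucially, that $B$ has no irreducible divisor of degree $4$ or $5$ (not merely none among $M_1,M_2,\overline{M_2}$); this is where the cited classification \cite[Lemma 5.4]{Gall-Rahav13} is essential and must be invoked carefully, so that the threshold $l\le 5$ in Lemma \ref{lesalfal}-ii genuinely applies. A secondary point requiring attention is keeping the two applications of Lemma \ref{lesalfal}-i correctly indexed and confirming that the coefficients $\alpha_2(\Sigma)$, $\alpha_4(\Sigma)$ (and the parity-of-$h$ quantities arising from $M^{2h}$) cancel, so that the final answer is truly independent of $h$.
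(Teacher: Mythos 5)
Your proposal is correct, and its checkable claims all verify: with $M=M_3$ one indeed has $\alpha_1(\sigma(M^{2h}))=\alpha_3(\sigma(M^{2h}))=0$ and $\alpha_5(\sigma(M^{2h}))=1$ by Lemma \ref{alfasigmM2h} (the square $M^{2h}$ has vanishing odd-index coefficients, and the coefficient of $x^{8h-5}$ in $M^{2h-1}=M\,(M^2)^{h-1}$ is $1$), and running the $\alpha$-calculus of Lemma \ref{lesalfal} through $\sigma(M^{2h})=DB$ and $U_{2h}=\sigma(D)\sigma(B)$ does give $\alpha_5(U_{2h})=1$ in cases (i) and (iii) and $\alpha_3(U_{2h})=1$ in cases (ii) and (iv); your case (ii) reproduces the paper's computation verbatim. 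The difference from the paper is one of self-containedness rather than of method: the paper carries out this coefficient tracking only for case (ii) and disposes of cases (i), (iii), (iv) by citing Lemmas 5.9, 5.10, 5.15 and 5.17 of \cite{Gall-Rahav13}, whereas you treat all four cases uniformly using only Lemmas \ref{lesalfal} and \ref{alfasigmM2h} together with the case split and the degree-$>5$ property of $B$ from \cite[Lemma 5.4]{Gall-Rahav13} --- the single external input both arguments genuinely require, which you correctly identify as the critical hypothesis. The paper's version buys brevity; yours buys a uniform and independently checkable proof that does not depend on the statements of four further lemmas from the earlier paper. Two of your anticipated difficulties in fact evaporate: only the odd-index values $\alpha_1,\alpha_3,\alpha_5$ of $\sigma(M^{2h})$ ever enter the inversion, so $\alpha_2(\Sigma)$, $\alpha_4(\Sigma)$ and the parity-of-$h$ quantities never appear at all (for instance, in case (iii) one gets $\alpha_1(B)=1$, $\alpha_3(B)+\alpha_2(B)=0$, $\alpha_5(B)+\alpha_4(B)=1$, whence $\alpha_5(U_{2h})=1$ directly); and the case $M=\overline{M_3}$ needs no separate treatment, being covered by the symmetry $x\mapsto x+1$ underlying the paper's reduction to $M=M_3$, while $M=M_1$ is excluded by Lemma \ref{omegasigmM2h}.
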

\begin{proof}
For i), iii) and iv) : use Lemmas 5.9, 5.10, 5.15, 5.17 (still in \cite{Gall-Rahav13}).\\
For ii): since $\sigma(M^{2h}) = (x^2+x+1)B$ and $U_{2h} =  (x^2+x) \sigma(B)$, we obtain (by Lemmas \ref{lesalfal} and \ref{alfasigmM2h}):
$$\begin{array}{l}
0= \alpha_1(M^{2h}) = \alpha_1(\sigma(M^{2h})) = \alpha_1(B)+1,\\
\alpha_3(U_{2h}) = \alpha_3(\sigma(B)) + \alpha_2(\sigma(B)) = \alpha_3(B)+\alpha_2(B),\\
0 = \alpha_3(M^{2h}) = \alpha_3(\sigma(M^{2h})) = \alpha_3(B)+\alpha_2(B)+\alpha_1(B).
\end{array}
$$
Thus, $\alpha_3(U_{2h}) = \alpha_3(B)+\alpha_2(B) = \alpha_1(B)=1$.
\end{proof}

\subsection{Case where $M \in \{M_2, \overline{M_2}\}$ and $h \geq 2$} \label{caseM2}

It suffices to consider $M=M_2=1+x+x^3$.
Recall that $U_{2h} = \sigma(\sigma(M^{2h}))$ splits and it is a square. Note also that if $h=1$, then $\sigma({M_2}^{2h}) = \sigma({M_2}^{2})= M_1M_3$.\\

For $h \in \{2,3\}$, we get by direct computations, $U_4 = x^3(x+1)^6 (x^3+x+1)$ and
$U_6 = x^8(x+1)^4(x^3+x+1)^2$ which do not split (even if $U_6$ is a square).\\

So, $h \geq 4$.

\begin{lemma} \label{divisordeg}
i) $1+x+x^2$ divides $\sigma(M^{2h})$ if and only if $3$ divides $2h+1$.\\
ii) $1+x^2+x^3$ divides $\sigma(M^{2h})$ if and only if $7$ divides $2h+1$.\\
iii) Any irreducible divisor of $\sigma(M^{2h})$ is of degree at least $4$, if $2h+1$ is divisible by a prime $p \not\in \{3,7\}$.
\end{lemma}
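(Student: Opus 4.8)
The plan is to establish each of the three equivalences by specializing the general divisibility facts already developed in the ``Useful facts'' subsection, applied with $M=M_2=1+x+x^3$ (so $\deg M = 3$) and with $p$ a prime factor of $2h+1$. The governing tool is Lemma~\ref{p-reduction}: since $\sigma(M^{k-1})$ divides $\sigma(M^{2h})$ whenever $k\mid 2h+1$, a small-degree irreducible divides $\sigma(M^{2h})$ essentially when it already divides $\sigma(M^{p-1})$ for some such prime $p$. Conversely, any irreducible factor of $\sigma(M^{2h})$ has degree divisible by $ord_p(2)$ by Lemma~\ref{ord(2)divise}, which is what forces the degree bound in part iii).

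For part i), I would note that $1+x+x^2$ is the unique irreducible of degree $2$, that $2^2-1=3$, and invoke Corollary~\ref{reduction4}: $1+x+x^2$ divides $\sigma(M^{p-1})$ precisely when $M\neq 1+x+x^2$ and $p=3$. Here $M=M_2\neq 1+x+x^2$ automatically (degrees differ), so the condition reduces to $3\mid 2h+1$. The forward direction uses Lemma~\ref{p-reduction} with $k=3$; the reverse direction uses that $ord_3(2)=2$ together with Corollary~\ref{notdivisor} (or directly Lemma~\ref{ord(2)divise}) to rule out a degree-$2$ divisor when $3\nmid 2h+1$. For part ii), the same scheme applies with the two irreducibles of degree $3$, namely $1+x^2+x^3=M$ itself and $1+x+x^3$; since $2^3-1=7$, Corollary~\ref{reduction4} gives divisibility by $1+x^2+x^3$ iff ($M\neq 1+x^2+x^3$ and $p=7$). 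The subtlety is that $M=M_2=1+x+x^3$, so $M\neq 1+x^2+x^3$ holds, and the equivalence collapses to $7\mid 2h+1$.

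For part iii), I would argue by exclusion of degrees $1,2,3$. Degree $1$ is impossible because $\sigma(M^{2h})$ is odd (its irreducible factors are Mersenne-type $1+x^{a_j}(x+1)^{b_j}$, hence never $x$ or $x+1$). For degree $2$, the only irreducible is $1+x+x^2$, which by part~i) requires $3\mid 2h+1$; for degree $3$, parts of the degree-$3$ analysis together with Lemma~\ref{ord(2)divise} require $7\mid 2h+1$ (since $ord_p(2)\mid 3$ forces $ord_p(2)\in\{1,3\}$, and a genuine cubic factor needs the relevant prime $7$ to divide $2h+1$). Thus, if $2h+1$ has a prime factor $p\notin\{3,7\}$ one must still exclude that the factors $3$ or $7$ of $2h+1$ sneak in low-degree divisors — so the cleanest formulation is that the hypothesis ``$2h+1$ divisible by a prime $p\notin\{3,7\}$'' is used via Corollary~\ref{notdivisor} and Lemma~\ref{ord(2)divise} to show no divisor of degree $<4$ can arise from that $p$.

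The main obstacle I anticipate is part iii): the statement as phrased speaks of ``any irreducible divisor,'' yet $2h+1$ could simultaneously be divisible by $3$ or $7$ \emph{and} by some $p\notin\{3,7\}$, which would reintroduce degree-$2$ or degree-$3$ factors through parts i) and ii). So I expect the intended reading is that the relevant prime $p$ under consideration satisfies $p\notin\{3,7\}$, and one shows via Lemma~\ref{ord(2)divise} that each irreducible factor contributed at the level $\sigma(M^{p-1})$ has degree divisible by $ord_p(2)$, with $ord_p(2)\geq 4$ forced whenever $p\notin\{3,7\}$ and $p$ is prime (because $ord_p(2)\in\{1,2,3\}$ would make $p$ divide $2^1-1,2^2-1,2^3-1\in\{1,3,7\}$). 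Pinning down this order argument cleanly, and reconciling it with the exact phrasing, is the delicate point; the degree-$1$ and degree-$2$ exclusions are routine.
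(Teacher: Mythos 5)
Your argument for parts i) and ii) is correct and is essentially the paper's own: the paper proves them by citing Corollaries \ref{notdivisor} and \ref{anydivides}, of which Corollary \ref{reduction4} (your entry point) is the direct specialization, with Lemma \ref{p-reduction} supplying the ``if'' directions exactly as you use it; note that you share with the paper the same small gloss, namely that Corollary \ref{notdivisor} concerns $\sigma(M^{p-1})$ while the ``only if'' directions concern $\sigma(M^{2h})$, so one really reruns the proof of Lemma \ref{PandQ} with exponent $2h+1$ in place of the prime $p$ (if $\beta$ is a root of the degree-$r$ candidate divisor then $M(\beta)$ has order $2^r-1\in\{3,7\}$, which must divide $2h+1$) --- an immediate adaptation in both cases. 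On part iii) you genuinely diverge, and usefully so: the paper disposes of it with ``follows from i) and ii)'', which is literally valid only in the setting where it is applied, namely after the reduction in the next subsubsection where one may assume $2h+1=p$ (so $3,7\nmid 2h+1$, parts i) and ii) exclude degrees $2$ and $3$, and $x$, $x+1$, $M$ are excluded by evaluation and by $\sigma(M^{2h})\equiv 1 \bmod M$). Your worry about the literal phrasing is well founded --- for $2h+1=15$ one has $M_1\mid\sigma(M^2)\mid\sigma(M^{14})$ by part i), so the unreduced statement fails --- and your alternative justification is sound and in fact more robust: for an odd prime $p\notin\{3,7\}$, $ord_p(2)\le 3$ would force $p\mid 2^r-1\in\{1,3,7\}$, so $ord_p(2)\ge 4$, and Lemma \ref{ord(2)divise} then bounds $\deg(P_j)\ge 4$ for every factor under the standing assumption (\ref{assume}), even when $3$ or $7$ also divides $2h+1$. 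One nitpick: your degree-$1$ exclusion (``the factors are Mersenne-type'') presupposes (\ref{assume}); unconditionally one simply evaluates, since $M(0)=M(1)=1$ gives $\sigma(M^{2h})(0)=\sigma(M^{2h})(1)=2h+1\equiv 1 \pmod 2$.
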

\begin{proof}
i) and ii): from Corollaries \ref{notdivisor} and \ref{anydivides}.\\
iii) follows from i) and ii).
\end{proof}

\subsubsection{Case where $2h+1$ is divisible by a prime $p \not\in \{3,7\}$}
By Lemma \ref{p-reduction}, $\sigma(M^{p-1})$ divides $\sigma(M^{2h})$. So, we may suppose that $2h+1 =p$ so that $2h=p-1$. It suffices then to prove (directly or by a contradiction)  that $\sigma(M^{2h})$ is divisible by a non Mersenne prime.

\begin{lemma}
i) $\alpha_l(U_{2h}) = \alpha_l(\sigma(M^{2h}))$ for $l\in \{1,2,3\}$.\\
ii) $\alpha_l(\sigma(M^{2h})) = \alpha_l(M^{2h})$ for $l\in \{1,2\}$, $\alpha_3(\sigma(M^{2h})) = \alpha_3(M^{2h}+M^{2h-1})$.
\end{lemma}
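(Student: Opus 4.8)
The plan is to read both statements off the $\alpha_l$-calculus already assembled in Section~\ref{preliminaire}, using the crucial degree information supplied by Lemma~\ref{divisordeg}. Throughout we are in the situation $M = M_2 = 1+x+x^3$, so $\deg M = a+b = 3$ (with $a=1$, $b=2$), and we have reduced to $2h = p-1$ for a prime $p \notin \{3,7\}$ dividing $2h+1$. Neither part requires new ideas; the work is just lining up the index ranges correctly.

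Part ii) is nothing more than Lemma~\ref{alfasigmM2h} specialized to $a+b=3$. That lemma gives $\alpha_l(\sigma(M^{2h})) = \alpha_l(M^{2h})$ on the range $1 \le l \le a+b-1 = 2$, which yields the claim for $l \in \{1,2\}$, and it gives $\alpha_l(\sigma(M^{2h})) = \alpha_l(M^{2h}+M^{2h-1})$ on the range $a+b = 3 \le l \le 2(a+b)-1 = 5$, which for $l=3$ produces $\alpha_3(\sigma(M^{2h})) = \alpha_3(M^{2h}+M^{2h-1})$. So no separate argument is needed here.

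For part i), I would write $U_{2h} = \sigma(\sigma(M^{2h}))$ and apply Lemma~\ref{lesalfal}-ii) to $S := \sigma(M^{2h})$. The point is that, since $p \notin \{3,7\}$, Lemma~\ref{divisordeg}-iii) guarantees that every irreducible divisor of $\sigma(M^{2h})$ has degree at least $4$; in particular no irreducible polynomial of degree at most $r=3$ divides $S$. Lemma~\ref{lesalfal}-ii) then forces $\alpha_l(\sigma(S)) = \alpha_l(S)$ for every $l \le 3$, that is $\alpha_l(U_{2h}) = \alpha_l(\sigma(M^{2h}))$ for $l \in \{1,2,3\}$, as desired.

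The computation carries essentially no difficulty; the only thing to be careful about is that the degree hypothesis feeds into Lemma~\ref{lesalfal}-ii) with exactly the right threshold $r = 3$ (matching the top index $l=3$ we need), and that this degree bound is precisely what the exclusion $p \notin \{3,7\}$ buys us through Corollaries~\ref{notdivisor} and~\ref{anydivides} as packaged in Lemma~\ref{divisordeg}. If instead $p \in \{3,7\}$ were allowed, a factor $1+x+x^2$, $1+x^2+x^3$ or $1+x+x^3$ could appear in $\sigma(M^{2h})$ and the step for part i) would break down; this is exactly why those two primes are peeled off and handled in their own subsubsections.
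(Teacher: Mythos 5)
Your proposal is correct and matches the paper's proof in substance: part i) is exactly the paper's argument (Lemma~\ref{divisordeg}-iii) feeding the degree-$\geq 4$ bound into Lemma~\ref{lesalfal}-ii) with $r=3$), and for part ii) you cite Lemma~\ref{alfasigmM2h} with $a+b=3$, whereas the paper simply redoes inline the same degree comparison ($6h-l > 3(2h-1)$ for $l\leq 2$ and $6h-l > 3(2h-2)$ for $3\leq l\leq 5$) that proves that lemma. Your observation about why the exclusion $p\notin\{3,7\}$ is exactly what makes the threshold $r=3$ work is accurate and consistent with how the paper peels off those cases into separate subsections.
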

\begin{proof}
i) follows from Lemma \ref{divisordeg}.\\
ii): for $l \leq 2$, one has: $6h -l = \deg(\sigma(M^{2h}))-l = \deg((M^{2h})-l > 3(2h-1) = \deg(M^{2h-1})$ and for $3 \leq l\leq 5$, $6h -l > 3(2h-2) = \deg(M^{2h-2})$. Hence, we get ii).
\end{proof}
\begin{corollary}
$\alpha_3(U_{2h}) = 1$ if $h \geq 4$.
\end{corollary}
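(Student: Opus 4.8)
The plan is to turn the statement into a one-line coefficient computation by feeding off the two identities in the preceding lemma. Since we are in the case $M=M_2=x^3+x+1$, part (i) of that lemma gives $\alpha_3(U_{2h})=\alpha_3(\sigma(M^{2h}))$, and part (ii) gives $\alpha_3(\sigma(M^{2h}))=\alpha_3(M^{2h}+M^{2h-1})$. So it suffices to compute $\alpha_3(M^{2h}+M^{2h-1})$, which (recalling $\deg M^{2h}=6h$) is just the coefficient of $x^{6h-3}$ in $M^{2h}+M^{2h-1}$.

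I would treat the two summands separately. For $M^{2h-1}$, its degree is exactly $3(2h-1)=6h-3$ and $M$ is monic, so the coefficient of $x^{6h-3}$ is its leading coefficient, namely $1$. For $M^{2h}$ the key observation is a parity one: $M^{2h}=(M^2)^h$, and by the Frobenius identity in characteristic $2$ we have $M^2=(x^3+x+1)^2=x^6+x^2+1\in\F_2[x^2]$. Hence $M^{2h}=(x^6+x^2+1)^h$ is a polynomial in $x^2$, so every monomial occurring in it has even degree. As $6h-3$ is odd, the coefficient of $x^{6h-3}$ in $M^{2h}$ vanishes, i.e. $\alpha_3(M^{2h})=0$. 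Adding the two contributions yields $\alpha_3(M^{2h}+M^{2h-1})=0+1=1$, whence $\alpha_3(U_{2h})=1$.

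There is essentially no obstacle once the parity remark is in place; the only point requiring care is that the reductions assembled in the preceding lemma are legitimately available, that is, that we sit in the standing situation $2h+1=p$ with $p\notin\{3,7\}$ — so that every prime divisor of $\sigma(M^{2h})$ has degree $\ge 4$, which is exactly what part (i) rests on — and $h\ge 4$. Under these hypotheses the degree bookkeeping behind part (ii), namely $6h-3>\deg(M^{2h-2})=6h-6$, holds automatically, so both identities apply and the computation goes through verbatim. An equivalent route, avoiding the $(M^2)^h$ rewriting, is to write $M^{2h}=x^{6h}(1+x^{-2}+x^{-3})^{2h}$ and read off the coefficient of $x^{-3}$: the only contribution comes from selecting the $x^{-3}$-term in one factor and $1$ in the remaining ones, giving $\binom{2h}{1}=2h\equiv 0\pmod 2$, the same value for $\alpha_3(M^{2h})$.
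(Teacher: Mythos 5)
Your proof is correct, and after the shared first step it takes a genuinely different computational route from the paper's. Both arguments start the same way, using the preceding lemma to reduce the claim to $\alpha_3(M^{2h}+M^{2h-1})$, i.e. the coefficient of $x^{6h-3}$ in $M^{2h}+M^{2h-1}$. The paper then factors $M^{2h}+M^{2h-1}=(x^3+x)M^{2h-1}$, applies Lemma \ref{lesalfal}-i) to rewrite the target as $\alpha_3(M^{2h-1})+\alpha_1(M^{2h-1})$, and evaluates these by expanding $(x^3+x+1)^{2h-1}=(x^3+x)^{2h-1}+(x^3+x)^{2h-2}+\cdots$ and reading off the coefficients of $x^{6h-6}$ and $x^{6h-4}$. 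You instead split the sum term by term: $M^{2h-1}$ is monic of degree exactly $6h-3$, contributing $1$, while $M^{2h}=(x^6+x^2+1)^h$ lies in $\F_2[x^2]$ by the Frobenius identity, so all its odd-degree coefficients --- in particular that of $x^{6h-3}$ --- vanish. Your route dispenses with both the factorization and the shift lemma, replacing the paper's binomial bookkeeping with a one-line parity observation; it is more elementary and arguably more robust, since the parity argument would work verbatim for any Mersenne prime $M$ in place of $x^3+x+1$, whereas the paper's expansion is tied to the specific shape $(x^3+x)^k$. You were also right to flag that the standing hypotheses ($2h+1=p$ prime with $p\notin\{3,7\}$, and $h\geq 4$) are what license both parts of the preceding lemma --- part i) rests on every prime divisor of $\sigma(M^{2h})$ having degree at least $4$, and part ii) on the degree bound $6h-3>6h-6$ --- which is exactly the role they play in the paper.
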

\begin{proof}
$\alpha_3(U_{2h}) = \alpha_3(M^{2h}+M^{2h-1}) = \alpha_3[(x^3+x)M^{2h-1}] = \alpha_3(M^{2h-1}) + \alpha_1(M^{2h-1})$.
But, $M^{2h-1} = (x^3+x+1)^{2h-1} = (x^3+x)^{2h-1} + (x^3+x)^{2h-2} +\cdots$\\
So, $\alpha_3(M^{2h-1})$ (resp. $\alpha_1(M^{2h-1})$) which is the coefficient of $x^{6h-6}$ (resp. of $x^{6h-4}$) in $M^{2h-1}$, equals $1$ (resp. $0$).
\end{proof}

\subsubsection{Case where $7$ divides $2h+1$}

In this case, by Lemma \ref{p-reduction}, $\sigma(M^6)$ divides $\sigma(M^{2h})$, where $\sigma(M^6) = (x^3+x^2+1)(x^6+x^5+1)(x^9+x^7+x^5+x+1)$ is divisible by the non Mersenne prime
$x^9+x^7+x^5+x+1 = 1+x(x+1)^2(x^3+x+1)^2$.

\subsubsection{Case where $3$ is the unique prime factor of $2h+1$}
In this case, $2h+1 = 3^w$, with $w\geq 2$ because $2h+1 \geq 9$. So, $9$ divides $2h+1$ and thus $\sigma(M^8)$ divides $\sigma(M^{2h})$ (by Lemma \ref{p-reduction}). We are done because
$\sigma(M^8) = (x^2+x+1)(x^4+x^3+1)(x^6+x+1)(x^{12}+x^8+x^7+x^4+1)$, where $x^6+x+1 = 1+x(x+1)M_3$ is not a Mersenne prime.

\subsection{Case where $M \not\in {\mathcal{M}}$ and $2h+1$ is divisible by a Mersenne prime number $p \not= 7$} \label{mersnumber}
Set $p:=2^m-1$, where $m$ and $p$ are both prime. We shall prove that $\sigma(M^{p-1})$ is divisible by a non Mersenne prime. Note that there are (at present)
``only'' 51 known Mersenne prime numbers (OEIS Sequences A$000043$ and A$000668$). The first five of them are: $3,7,31, 127$ and $8191$.

Here, $a+b = \deg(M) \geq 5$ since $M \not\in {\mathcal{M}}$.
Corollary \ref{anydivides} and Lemma \ref{phiandirreduc}-iii) imply that for $p\geq 31$, we get our result.
It remains then the case $p=3$ because $p \not= 7$.

Lemma \ref{oldresult3} has already treated the case where $\omega(\sigma(M^{2}))=2$. So, we suppose that $\omega(\sigma(M^{2})) \geq 3$. Put:
$$\sigma(M^{2}) = M_1 \cdots M_r, \ r \geq 3 \text{ and } W:=U_4=\sigma(\sigma(M^2)).$$
We get by Corollary \ref{reduction4}:

\begin{lemma} \label{smalldivisors}
i) $1+x+x^2$ divides $\sigma(M^2)$.\\
ii) No irreducible polynomial of degree $r \geq 3$ such that $2^r-1$ is prime divides $\sigma(M^2)$.
\end{lemma}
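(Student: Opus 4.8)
The plan is to read off both parts directly from the divisibility dictionary built up in Section~\ref{preliminaire}, specialized to the situation at hand. Recall that throughout this subsection $p = 3 = 2^2-1$ (the range $p \geq 31$ having already been settled by Corollary~\ref{anydivides} and Lemma~\ref{phiandirreduc}-iii), and $p=7$ being excluded by hypothesis), that $\sigma(M^{p-1}) = \sigma(M^2)$, and that $M \not\in \mathcal{M}$ forces $\deg(M) = a+b \geq 5$. No genuinely new argument is needed: the entire content lies in verifying the hypotheses of Corollaries~\ref{reduction4} and \ref{notdivisor} for this one value of $p$.

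For part i) I would invoke Corollary~\ref{reduction4}, whose first line asserts that $1+x+x^2$ divides $\sigma(M^{p-1})$ if and only if both $M \neq 1+x+x^2$ and $p=3$. Here $p=3$ holds by assumption, while $\deg(M) \geq 5 > 2 = \deg(1+x+x^2)$ guarantees $M \neq 1+x+x^2$. Both conditions are therefore met, and hence $1+x+x^2 \mid \sigma(M^2)$.

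For part ii) I would appeal to Corollary~\ref{notdivisor}. Fix any $r \geq 3$ with $2^r-1$ prime. Then $2^r-1 \geq 2^3-1 = 7 > 3 = p$, so $2^r-1$ is a prime \emph{distinct} from $p$; the corollary then forbids every irreducible polynomial of degree $r$ from dividing $\sigma(M^{p-1}) = \sigma(M^2)$. In particular this disposes of the two cubics $1+x^2+x^3$ and $1+x+x^3$ (for which one could alternatively quote the second and third lines of Corollary~\ref{reduction4} together with $p \neq 7$), and it handles every higher Mersenne-exponent degree uniformly.

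The step I expect to require the most care is not a computation but the bookkeeping at the borderline degree $r=2$: since $2^2-1 = 3 = p$, Corollary~\ref{notdivisor} \emph{cannot} be applied there, which is precisely why part ii) is stated for $r \geq 3$ while $1+x+x^2$ genuinely occurs in part i). So the only delicate point is to keep the two mechanisms cleanly separated when quoting the corollaries, namely the equality $2^2-1 = p$ (which makes the degree-$2$ factor appear) versus the inequality $2^r-1 \neq p$ for $r \geq 3$ (which kills the degree-$\geq 3$ Mersenne-exponent factors).
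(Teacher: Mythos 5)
Your proof is correct and takes essentially the same route as the paper, which derives the lemma in one line from the corollaries of Section~\ref{preliminaire}. If anything, your citation is more precise: the paper attributes both parts to Corollary~\ref{reduction4}, which literally covers only the quadratic and the two cubics, whereas your appeal to Corollary~\ref{notdivisor} (noting $2^r-1\geq 7>3=p$ for all $r\geq 3$) is what the general statement of part ii) actually requires.
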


\begin{lemma} \label{lesalphasigmaM2}
Write $\sigma(M^2) = M_1 B$ where $M_1=1+x+x^2$, $\gcd(M_1,B) = 1$. One has:
$$\begin{array}{l}
i) \ \alpha_1(\sigma(M^2)) = \alpha_1(B)+1,\ \alpha_2(\sigma(M^2)) = \alpha_2(B)+\alpha_1(B)+1,\\
ii) \ \alpha_3(\sigma(M^2)) = \alpha_3(B)+\alpha_2(B)+\alpha_1(B),\\
iii) \ \alpha_3(\sigma(M^2)) = 0.

\end{array}$$
\end{lemma}
\begin{proof}
$\sigma(M^2)=M_1B = (x^2+x+1)B$. So we directly get i) and ii).\\
iii): $\sigma(M^2)=1+M+M^2=x^{2a}(x+1)^{2b} + x^a(x+1)^b+1$.\\
$2a+2b-3 > a+b$ because $a+b \geq 4$ and $x^{2a}(x+1)^{2b}$ is a square. So, $\alpha_3(\sigma(M^2))=\alpha_3(x^{2a}(x+1)^{2b})=0$.\\
\end{proof}

\begin{lemma} \label{lesalphaW}
One has:
$$\alpha_1(W) = \alpha_1(B)+1,\ \alpha_2(W)=\alpha_2(B) + \alpha_1(B),\ \alpha_3(W)=\alpha_3(B) + \alpha_2(B).$$
\end{lemma}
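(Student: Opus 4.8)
The plan is to exploit the multiplicativity of $\sigma$ together with the coprimality $\gcd(M_1,B)=1$. Since $W=\sigma(\sigma(M^2))=\sigma(M_1B)=\sigma(M_1)\sigma(B)$ and $M_1=1+x+x^2$ is irreducible, we have $\sigma(M_1)=1+M_1=x^2+x$. Hence $W=(x^2+x)\,\sigma(B)$, and the three claimed identities should follow by applying Lemma \ref{lesalfal}-i) to the two-term factor $x^2+x$ (i.e. $r_1=2$, $r_2=1$), which yields $\alpha_l(W)=\alpha_l(\sigma(B))+\alpha_{l-1}(\sigma(B))$ for $1\le l\le 3$. This mirrors exactly the computation already carried out for $\sigma(M^2)=(x^2+x+1)B$ in Lemma \ref{lesalphasigmaM2}, with $x^2+x=\sigma(M_1)$ playing the role that $x^2+x+1=M_1$ played there.

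The one point requiring care is to replace $\alpha_l(\sigma(B))$ by $\alpha_l(B)$, which is legitimate via Lemma \ref{lesalfal}-ii) provided no irreducible polynomial of degree at most $3$ divides $B$. I would establish this by ruling out the small degrees one at a time. For degree $1$: since $M(0)=M(1)=1$, evaluating $\sigma(M^2)=1+M+M^2$ at $0$ and at $1$ gives $1$, so neither $x$ nor $x+1$ divides $\sigma(M^2)$, hence neither divides $B$. For degree $2$: the only irreducible candidate is $M_1=x^2+x+1$, excluded by the hypothesis $\gcd(M_1,B)=1$. For degree $3$: since $2^3-1=7$ is prime, Lemma \ref{smalldivisors}-ii) forbids any degree-$3$ irreducible divisor of $\sigma(M^2)$, hence of $B$. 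Thus the least degree of an irreducible factor of $B$ is at least $4$, and Lemma \ref{lesalfal}-ii) (with $r=3$) gives $\alpha_l(\sigma(B))=\alpha_l(B)$ for $0\le l\le 3$. Note also that $\deg(\sigma(B))=\deg(B)\ge 8$, since $\deg(\sigma(M^2))=2(a+b)\ge 10$, so the range hypotheses of Lemma \ref{lesalfal} are comfortably satisfied.

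Combining the two steps completes the argument: for $l\in\{1,2,3\}$ we obtain $\alpha_l(W)=\alpha_l(B)+\alpha_{l-1}(B)$, and since $\alpha_0(B)=1$ this reads $\alpha_1(W)=\alpha_1(B)+1$, $\alpha_2(W)=\alpha_2(B)+\alpha_1(B)$ and $\alpha_3(W)=\alpha_3(B)+\alpha_2(B)$, as claimed. I expect no genuine obstacle here beyond the bookkeeping of the small-degree exclusions; the only way the replacement could fail is if $B$ had an irreducible factor of degree $\le 3$, and the three cases above close exactly that gap. The whole computation is parallel to Lemma \ref{lesalphasigmaM2}, the sole difference being the constant term: $\sigma(M_1)=x^2+x$ lacks the $+1$ present in $M_1=x^2+x+1$, which is precisely why each sum here is one term shorter than its counterpart there.
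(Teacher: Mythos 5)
Your proof is correct and follows essentially the same route as the paper's: writing $W=\sigma(M_1)\sigma(B)=(x^2+x)\sigma(B)$, transferring coefficients via Lemma \ref{lesalfal}, and replacing $\alpha_l(\sigma(B))$ by $\alpha_l(B)$ because $B$ has no irreducible factor of degree $\leq 3$. The only difference is that you spell out the small-degree exclusions (evaluation at $0$ and $1$ for degree $1$, $\gcd(M_1,B)=1$ for degree $2$, Lemma \ref{smalldivisors}-ii) for degree $3$), which the paper asserts without detail.
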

\begin{proof}
$W=\sigma(\sigma(M^2))=\sigma(M_1B)=\sigma(M_1)\sigma(B)=(x^2+x) \sigma(B)$.\\
Moreover, any irreducible divisor of $B$ has degree more than $3$. Hence, $\alpha_l(\sigma(B)) = \alpha_l(B),$ for $1\leq l \leq 3$. One gets:
$$\begin{array}{l}
\alpha_1(W)=\alpha_1(\sigma(B))+1=\alpha_1(B)+1,\\ 
\alpha_2(W)=\alpha_2(\sigma(B))+\alpha_1(\sigma(B))= \alpha_2(B)+\alpha_1(B),
\end{array}$$
and $\alpha_3(W)=\alpha_3(\sigma(B))+\alpha_2(\sigma(B)) = \alpha_3(B) + \alpha_2(B)$.
\end{proof}
Corollary below contradicts the fact that $W$ is a square and finishes the proof for $p=3$.
\begin{corollary}
$\alpha_3(W) = 1$.
\end{corollary}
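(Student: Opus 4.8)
The plan is to obtain $\alpha_3(W)$ by mechanically combining the three coefficient lemmas just proved, reducing everything to the single identity $\alpha_1(B)=1$. First I would apply Lemma \ref{lesalphaW}, which directly gives $\alpha_3(W)=\alpha_3(B)+\alpha_2(B)$, all coefficient identities being read modulo $2$. To evaluate the right-hand side I would use parts ii) and iii) of Lemma \ref{lesalphasigmaM2}: from $\alpha_3(\sigma(M^2))=\alpha_3(B)+\alpha_2(B)+\alpha_1(B)$ together with $\alpha_3(\sigma(M^2))=0$, I deduce $\alpha_3(B)+\alpha_2(B)=\alpha_1(B)$. Substituting into the previous relation yields the clean reduction $\alpha_3(W)=\alpha_1(B)$.

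It then remains to show $\alpha_1(B)=1$. Part i) of Lemma \ref{lesalphasigmaM2} gives $\alpha_1(\sigma(M^2))=\alpha_1(B)+1$, so it is enough to prove $\alpha_1(\sigma(M^2))=0$. For this I would invoke Lemma \ref{alfasigmM2h} with $h=1$ and $l=1$; this is legitimate since $M\notin\mathcal{M}$ forces $a+b\geq 5$, hence $1\leq a+b-1$, placing $l=1$ in the admissible range. That lemma gives $\alpha_1(\sigma(M^2))=\alpha_1(M^2)$. Finally, $M^2=(x^a(x+1)^b+1)^2=x^{2a}(x+1)^{2b}+1$ is a square over $\F_2$ and therefore contains only monomials of even degree; since $\alpha_1(M^2)$ is the coefficient of $x^{2(a+b)-1}$, an odd-degree term, it vanishes. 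Chaining the equalities gives $\alpha_1(B)=1$ and hence $\alpha_3(W)=1$.

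I do not expect a genuine obstacle: the argument is pure bookkeeping among identities already established, the only substantive input being the parity fact that a square over $\F_2$ has no odd-degree term. The points requiring care are keeping all identities modulo $2$ (so that the sign in $\alpha_3(B)+\alpha_2(B)=\alpha_1(B)$ is harmless) and verifying that the hypothesis $a+b\geq 5$ really does put $l=1$ inside the range of Lemma \ref{alfasigmM2h}. Since a square over $\F_2$ cannot have $\alpha_3\neq 0$, the resulting value $\alpha_3(W)=1$ contradicts the squareness of $W$, thereby completing the case $p=3$.
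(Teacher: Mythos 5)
Your proof is correct, and it shares the paper's main bookkeeping skeleton: like the paper, you use Lemma \ref{lesalphaW} to get $\alpha_3(W)=\alpha_3(B)+\alpha_2(B)$ and Lemma \ref{lesalphasigmaM2}-ii),iii) to reduce this to $\alpha_3(W)=\alpha_1(B)$. Where you genuinely diverge is in certifying $\alpha_1(B)=1$. The paper extracts it from the squareness of $W$: since $W$ is a square, $0=\alpha_1(W)=\alpha_1(B)+1$ by Lemma \ref{lesalphaW}, hence $\alpha_1(B)=1$. You instead compute $\alpha_1(\sigma(M^2))$ unconditionally: by Lemma \ref{alfasigmM2h} with $h=1$ and $l=1$ (valid since $a+b\geq 5$), $\alpha_1(\sigma(M^2))=\alpha_1(M^2)$, and $M^2=x^{2a}(x+1)^{2b}+1$ is a square over $\F_2$, so its coefficient of the odd-degree monomial $x^{2(a+b)-1}$ vanishes; then Lemma \ref{lesalphasigmaM2}-i) gives $0=\alpha_1(B)+1$. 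This is precisely the parity argument the paper itself deploys to prove $\alpha_3(\sigma(M^2))=0$ in Lemma \ref{lesalphasigmaM2}-iii), so your route is a natural extension of existing machinery rather than a new idea. What your version buys: the identity $\alpha_3(W)=1$ is derived without ever using the square property of $W$, so that property (Corollary \ref{sommaieven}-iii)) is invoked exactly once, at the terminal contradiction --- a marginally cleaner logical structure. What the paper's version buys: brevity, since it reuses the already-available squareness of $W$ and needs no further coefficient computation. Both arguments are sound, and your cautions (working mod $2$ throughout, checking $l=1$ lies in the range of Lemma \ref{alfasigmM2h}) are exactly the right ones.
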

\begin{proof}
$W$ is a square, so $0=\alpha_1(W)=\alpha_1(B)+1$ and thus $\alpha_1(B)=1$.\\
Lemma \ref{lesalphasigmaM2}-iii) implies that $0=\alpha_3(\sigma(M^2)) = \alpha_3(B)+\alpha_2(B)+\alpha_1(B)$. Therefore, we get: $\alpha_3(W)= \alpha_3(B)+\alpha_2(B) = \alpha_1(B)=1.$
\end{proof}
\begin{remark}
\emph{Our method fails for  $p=7$. Indeed, for many $M$'s, one has $\alpha_3(W) = \alpha_5(W) =0$ so that we do not reach a contradiction. We should find a large enough odd integer $l$ such that $\alpha_l(W) =0$. But, this does not appear always possible.}
\end{remark}

\subsection{Case where $M \not\in {\mathcal{M}}$ and $2h+1$ is divisible by a prime $p$ with $ord_p(2) \equiv 0 \mod 8$} \label{case-p-Fermat}
Lemmas \ref{mersennedeg8k} and \ref{ord(2)divise} imply Corollary \ref{pFermat}.
\begin{lemma} \label{mersennedeg8k}
There exists no Mersenne prime of degree multiple of $8$.
\end{lemma}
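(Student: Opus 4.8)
The plan is to argue by contradiction and show that a Mersenne prime whose degree $n=a+b$ is divisible by $8$ would be forced to factor, by controlling the parity of the number of irreducible factors through Swan's classical mod-$8$ discriminant criterion. First I would record the shape of the exponents: since $M=1+x^a(x+1)^b$ is irreducible we have $\gcd(a,b)=1$, and as $n=a+b\equiv 0\pmod 8$ is even, both $a$ and $b$ must be odd. I would also check that $M$ is squarefree over $\F_2$ --- this is immediate since $M(0)=M(1)=1$, so $\gcd(M,x(x+1))=1$, while over $\F_2$ one computes $M'=x^{a-1}(x+1)^{b-1}$ --- so that Swan's criterion applies to the number $r$ of irreducible factors of $M$.

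Next I would pass to the integer lift $F=x^a(x+1)^b+1\in\Z[x]$, which is monic of degree $n$, and compute $\operatorname{disc}(F)=(-1)^{n(n-1)/2}\operatorname{Res}(F,F')$. Over $\Z$ one has $F'=x^{a-1}(x+1)^{b-1}(nx+a)$, so $\operatorname{Res}(F,F')=\prod_{F(\theta)=0}F'(\theta)$ factors as $\big(\prod\theta\big)^{a-1}\big(\prod(\theta+1)\big)^{b-1}\prod(n\theta+a)$. Evaluating these symmetric products through $F(0)=F(-1)=1$ and $F(-a/n)=1+(-1)^a a^a b^b/n^n$ (using $n-a=b$), I expect the clean closed form $\operatorname{disc}(F)=(-1)^{n(n-1)/2}\,(n^n-a^a b^b)$. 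I would sanity-check this on $(a,b)=(1,1)$ and $(1,2)$, where it gives $-3$ and $-23$, the true discriminants of $x^2+x+1$ and $x^3+2x^2+x+1$.

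The endgame is a reduction modulo $8$. Since $8\mid n$ we have $n^n\equiv 0\pmod 8$, and $n(n-1)/2=(n/2)(n-1)$ is even because $4\mid n/2$, so the leading sign is $+1$; hence $\operatorname{disc}(F)\equiv -a^a b^b\pmod 8$. For odd $m$ one has $m^m\equiv m\pmod 8$ (since $m^2\equiv1$), so $a^a b^b\equiv ab\pmod 8$, and from $b\equiv -a$ together with $a^2\equiv1\pmod 8$ we get $ab\equiv -1$, whence $\operatorname{disc}(F)\equiv 1\pmod 8$. By Swan's theorem this forces $r\equiv n\equiv 0\pmod 2$, i.e. $M$ has an even number of irreducible factors, contradicting irreducibility (where $r=1$). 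Therefore no Mersenne prime has degree divisible by $8$.

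The main obstacle I anticipate is the discriminant computation: correctly handling the rational root $-a/n$ of $F'$ (the factor $n\theta+a$), so that the $n^n$ denominators cancel against $F(-a/n)$, and faithfully tracking the global sign $(-1)^{n(n-1)/2}$ and the parity conventions, so that the final residue is unambiguously $1$ rather than $5\pmod 8$. A secondary point is invoking Swan's criterion in the correct direction ($\operatorname{disc}\equiv1\pmod 8\iff r\equiv n\pmod2$); I would pin this down on the small irreducible examples above before applying it.
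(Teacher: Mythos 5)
Your proof is correct and is in essence the paper's own route: the paper disposes of this lemma by citing \cite[Corollary 3.3]{Gall-Rahav-mersenn}, which is exactly the parity statement that $\omega(1+x^{c_1}(x+1)^{c_2})$ is even when $8 \mid c_1+c_2$, and that corollary rests on the same Swan mod-$8$ discriminant criterion you carry out in full (your closed form $\operatorname{disc}(F)=(-1)^{n(n-1)/2}(n^n-a^a b^b)$ for $a,b$ odd, the squarefreeness check via $M'=x^{a-1}(x+1)^{b-1}$, and the reduction $\operatorname{disc}(F)\equiv -ab \equiv 1 \pmod 8$ all check out). The only difference is that your argument is self-contained, supplying the discriminant computation that the paper outsources to the cited reference.
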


\begin{proof}
If $Q=1+x^{c_1}(x+1)^{c_2}$ with $c_1+c_2 = 8k$, then $\omega(Q)$ is even by \cite[Corollary 3.3]{Gall-Rahav-mersenn}.
\end{proof}

\begin{corollary} \label{pFermat}
If $2h+1$ is divisible by a prime $p$ such that $8$ divides $ord_p(2)$ $($in particular, when $p>5$ is a Fermat prime$)$, then $\sigma(M^{2h})$ is divisible by a non Mersenne prime.
\end{corollary}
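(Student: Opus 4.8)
The plan is to argue by contradiction within the global framework of Section \ref{proof0}, using only the two lemmas named just before the statement. Suppose $\sigma(M^{2h})$ were divisible only by Mersenne primes. Since $\sigma(M^{2h})$ is square-free and reducible by Lemma \ref{omegasigmM2h}, this is exactly assumption (\ref{assume}): $\sigma(M^{2h}) = \prod_{j \in J} P_j$ with the $P_j$ distinct Mersenne primes and $J \neq \emptyset$. Fixing (as the framework does) a prime factor $p$ of $2h+1$, now subject to $8 \mid ord_p(2)$, Lemma \ref{ord(2)divise} gives $ord_p(2) \mid \deg(P_j)$ for every $j \in J$, whence $8 \mid \deg(P_j)$. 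Picking any single $j$, the factor $P_j$ is then a Mersenne prime whose degree is a multiple of $8$, directly contradicting Lemma \ref{mersennedeg8k}. Therefore assumption (\ref{assume}) fails and $\sigma(M^{2h})$ carries a non-Mersenne prime divisor, which is the claim.

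To dispose of the parenthetical ``in particular'', I would verify that every Fermat prime $p > 5$ satisfies the general hypothesis $8 \mid ord_p(2)$. Writing $p = 2^{2^k}+1$, the congruence $2^{2^k} \equiv -1 \pmod{p}$ forces $2^{2^{k+1}} \equiv 1 \pmod{p}$ together with $2^{2^k} \not\equiv 1 \pmod{p}$, so $ord_p(2) = 2^{k+1}$. The constraint $p > 5$ excludes $k = 0,1$ (i.e.\ $p = 3, 5$) and leaves $k \geq 2$, hence $k+1 \geq 3$ and $8 = 2^3 \mid 2^{k+1} = ord_p(2)$, as required; the smallest instance $p = 17$ with $ord_{17}(2) = 8$ confirms the bookkeeping.

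I expect no genuine obstacle here: all the substance is carried by the two preceding lemmas, and the corollary is a one-line deduction once the contradiction hypothesis (\ref{assume}) is in force. The only subtlety worth flagging is that, unlike the Mersenne-number case of Subsection \ref{mersnumber}, no reduction via Lemma \ref{p-reduction} to $2h+1 = p$ is required, since a single prime factor $P_j$ of forbidden degree already suffices; and the Fermat-prime order computation is the sole place where an off-by-one could slip in, which the explicit check above rules out.
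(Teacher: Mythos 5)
Your proof is correct and matches the paper's own argument essentially step for step: both derive the contradiction by combining Lemma \ref{ord(2)divise} (so $8 \mid ord_p(2) \mid \deg(P_j)$ under assumption (\ref{assume})) with Lemma \ref{mersennedeg8k}, and both settle the parenthetical claim by computing $ord_p(2)=2^{w+1}$ for a Fermat prime $p=2^{2^w}+1$ with $w\geq 2$. Your write-up is merely more explicit about invoking Lemma \ref{omegasigmM2h} and the order computation, which the paper leaves terse.
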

\begin{proof}
If not, Lemma \ref{ord(2)divise} implies that $ord_p(2)$ divides $\deg(P_j)$, for any $j \in J$. So, we get a contradiction to Lemma \ref{mersennedeg8k}: $8$ divides $\deg(P_j)$.\\
In particular, if $p=2^{2^w}+1$, with $w \geq 2$, then $ord_p(2) = 2^{w+1}$ which is divisible by $8$.
\end{proof}

\begin{remarks}
\emph{i) If $p$ is a Fermat prime, then $ord_p(2) \equiv 0 \mod 8$. The converse is false. Examples: $p \in \{97,673\}$ with $ord_p(2)=48$.}\\
\emph{ii) It remains the following (large) case to complete the proof of Conjecture \ref{oldconject}:
$M \not\in \mathcal{M}$ and $2h+1$ is divisible by $p \in \{5,7\}$ or by $p>7$ which is neither Mersenne prime nor Fermat prime.\\
Moreover, assuming Conjecture \ref{oldconject}, similar proofs as in Section \ref{case-perfect} would state that there exists no odd perfect polynomial over $\F_2$ which is only divisible by Mersenne primes.}
\end{remarks}
\def\biblio{\def\titrebibliographie{References}\thebibliography}
\let\endbiblio=\endthebibliography

%\def\references{\def\titrebibliographie{R\' ef\'
%erences}\thebibliography}
%\let\endreferences=\endthebibliography

%%%% MACROS DE SEROUL POUR LES REFERENCES %%%%

%%%%%%% bibliographie selon AMS style %%%%%%%%%%%
%%%%%%% inspir de TUGboat 11 (1990), p. 609 %%%%%%%

\newbox\auteurbox
\newbox\titrebox
\newbox\titrelbox
\newbox\editeurbox
\newbox\anneebox
\newbox\anneelbox
\newbox\journalbox
\newbox\volumebox
\newbox\pagesbox
\newbox\diversbox
\newbox\collectionbox
%--------------------------------------------
\def\fabriquebox#1#2{\par\egroup
\setbox#1=\vbox\bgroup \leftskip=0pt \hsize=\maxdimen \noindent#2}
%--------------------------------------------
\def\bibref#1{\bibitem{#1}

%\mbox{}\ignorespaces

\setbox0=\vbox\bgroup}
%--------------------------------------------
\def\auteur{\fabriquebox\auteurbox\styleauteur}
\def\titre{\fabriquebox\titrebox\styletitre}
\def\titrelivre{\fabriquebox\titrelbox\styletitrelivre}
\def\editeur{\fabriquebox\editeurbox\styleediteur}

\def\journal{\fabriquebox\journalbox\stylejournal}

\def\volume{\fabriquebox\volumebox\stylevolume}
\def\collection{\fabriquebox\collectionbox\stylecollection}
%--------------------------------------------
{\catcode`\- =\active\gdef\annee{\fabriquebox\anneebox\catcode`\-
=\active\def -{\hbox{\rm
\string-\string-}}\styleannee\ignorespaces}}
%--------------------------------------------
{\catcode`\-
=\active\gdef\anneelivre{\fabriquebox\anneelbox\catcode`\-=
\active\def-{\hbox{\rm \string-\string-}}\styleanneelivre}}
%--------------------------------------------
{\catcode`\-=\active\gdef\pages{\fabriquebox\pagesbox\catcode`\-
=\active\def -{\hbox{\rm\string-\string-}}\stylepages}}
%--------------------------------------------
{\catcode`\-
=\active\gdef\divers{\fabriquebox\diversbox\catcode`\-=\active
\def-{\hbox{\rm\string-\string-}}\rm}}
%--------------------------------------------
\def\ajoutref#1{\setbox0=\vbox{\unvbox#1\global\setbox1=
\lastbox}\unhbox1 \unskip\unskip\unpenalty}
%--------------------------------------------
\newif\ifpreviousitem
\global\previousitemfalse
\def\separateur{\ifpreviousitem {,\ }\fi}
%--------------------------------------------
\def\voidallboxes
{\setbox0=\box\auteurbox \setbox0=\box\titrebox
\setbox0=\box\titrelbox \setbox0=\box\editeurbox
\setbox0=\box\anneebox \setbox0=\box\anneelbox
\setbox0=\box\journalbox \setbox0=\box\volumebox
\setbox0=\box\pagesbox \setbox0=\box\diversbox
\setbox0=\box\collectionbox \setbox0=\null}
%--------------------------------------------
\def\fabriquelivre
{\ifdim\ht\auteurbox>0pt
\ajoutref\auteurbox\global\previousitemtrue\fi
\ifdim\ht\titrelbox>0pt
\separateur\ajoutref\titrelbox\global\previousitemtrue\fi
\ifdim\ht\collectionbox>0pt
\separateur\ajoutref\collectionbox\global\previousitemtrue\fi
\ifdim\ht\editeurbox>0pt
\separateur\ajoutref\editeurbox\global\previousitemtrue\fi
\ifdim\ht\anneelbox>0pt \separateur \ajoutref\anneelbox
\fi\global\previousitemfalse}
%--------------------------------------------
\def\fabriquearticle
{\ifdim\ht\auteurbox>0pt        \ajoutref\auteurbox
\global\previousitemtrue\fi \ifdim\ht\titrebox>0pt
\separateur\ajoutref\titrebox\global\previousitemtrue\fi
\ifdim\ht\titrelbox>0pt \separateur{\rm in}\
\ajoutref\titrelbox\global \previousitemtrue\fi
\ifdim\ht\journalbox>0pt \separateur
\ajoutref\journalbox\global\previousitemtrue\fi
\ifdim\ht\volumebox>0pt \ \ajoutref\volumebox\fi
\ifdim\ht\anneebox>0pt  \ {\rm(}\ajoutref\anneebox \rm)\fi
\ifdim\ht\pagesbox>0pt
\separateur\ajoutref\pagesbox\fi\global\previousitemfalse}
%--------------------------------------------
\def\fabriquedivers
{\ifdim\ht\auteurbox>0pt
\ajoutref\auteurbox\global\previousitemtrue\fi
\ifdim\ht\diversbox>0pt \separateur\ajoutref\diversbox\fi}
%--------------------------------------------
\def\endbibref
{\egroup \ifdim\ht\journalbox>0pt \fabriquearticle
\else\ifdim\ht\editeurbox>0pt \fabriquelivre
\else\ifdim\ht\diversbox>0pt \fabriquedivers \fi\fi\fi.\voidallboxes}
%--------------------------------------------

\let\styleauteur=\sc
\let\styletitre=\it
\let\styletitrelivre=\sl
\let\stylejournal=\rm
\let\stylevolume=\bf
\let\styleannee=\rm
\let\stylepages=\rm
\let\stylecollection=\rm
\let\styleediteur=\rm
\let\styleanneelivre=\rm

\begin{biblio}{99}

\begin{bibref}{Beard2}
\auteur{J. T. B. Beard Jr}  \titre{Perfect polynomials revisited}
\journal{Publ. Math. Debrecen} \volume{38/1-2} \pages 5-12 \annee
1991
\end{bibref}

\begin{bibref}{Beard}
\auteur{J. T. B. Beard Jr, J. R. Oconnell Jr, K. I. West}
\titre{Perfect polynomials over $GF(q)$} \journal{Rend. Accad.
Lincei} \volume{62} \pages 283-291 \annee 1977
\end{bibref}

\begin{bibref}{Canaday}
\auteur{E. F. Canaday} \titre{The sum of the divisors of a
polynomial} \journal{Duke Math. J.} \volume{8} \pages 721-737 \annee
1941
\end{bibref}

\begin{bibref}{Gall-Rahav4}
\auteur{L. H. Gallardo, O. Rahavandrainy} \titre{Odd perfect
polynomials over $\F_2$} \journal{J. Th\'eor. Nombres Bordeaux}
\volume{19} \pages 165-174 \annee 2007
\end{bibref}

\begin{bibref}{Gall-Rahav7}
\auteur{L. H. Gallardo, O. Rahavandrainy} \titre{There is no odd
perfect polynomial over $\F_2$ with four prime factors}
\journal{Port. Math. (N.S.)} \volume{66(2)} \pages 131-145 \annee
2009
\end{bibref}

\begin{bibref}{Gall-Rahav5}
\auteur{L. H. Gallardo, O. Rahavandrainy} \titre{Even perfect
polynomials over $\F_2$ with four prime factors} \journal{Intern. J.
of Pure and Applied Math.} \volume{52(2)} \pages 301-314 \annee 2009
\end{bibref}

\begin{bibref}{Gall-Rahav8}
\auteur{L. H. Gallardo, O. Rahavandrainy} \titre{All perfect
polynomials with up to four prime factors over $\F_4$}
\journal{Math. Commun.} \volume{14(1)} \pages 47-65 \annee 2009
\end{bibref}

\begin{bibref}{Gall-Rahav2}
\auteur{L. H. Gallardo, O. Rahavandrainy} \titre{On splitting
perfect polynomials over $\F_{p^p}$} \journal{Int. Electron. J.
Algebra} \volume{9} \pages 85-102 \annee 2011
\end{bibref}

\begin{bibref}{Gall-Rahav12}
\auteur{L. H. Gallardo, O. Rahavandrainy} \titre{On even (unitary) perfect
polynomials over $\F_{2}$ } \journal{Finite Fields Appl.} \volume{18} \pages 920-932 \annee 2012
\end{bibref}

\begin{bibref}{Gall-Rahav13}
\auteur{L. H. Gallardo, O. Rahavandrainy} \titre{Characterization of Sporadic perfect
polynomials over $\F_{2}$ } \journal{Functiones et Approx.} \volume{55.1} \pages 7-21 \annee 2016
\end{bibref}

\begin{bibref}{Gall-Rahav-mersenn}
\auteur{L. H. Gallardo, O. Rahavandrainy} \titre{On Mersenne
polynomials over $\F_{2}$} \journal{Finite Fields Appl.}
\volume{59} \pages 284-296 \annee 2019
\end{bibref}

\begin{bibref}{Rudolf}
\auteur{R. Lidl, H. Niederreiter} \titrelivre{Finite Fields,
Encyclopedia of Mathematics and its applications} \editeur{Cambridge
University Press} \anneelivre 1983 (Reprinted 1987)
\end{bibref}

\end{biblio}
\end{document}